\pgfplotsset{width=10cm,compat=1.9}
\newtheorem{theorem}{Theorem}[section]
\newtheorem*{theorem**}{Theorem\theoremnum}
\newenvironment{theorem*}[1][]{%
	\edef\theoremnum{\if\relax\detokenize{#1}\relax\else~#1\fi}
	\begin{theorem**}
	}{%
	\end{theorem**}
}  
\newtheorem{pro}{Proposition}[section]
\newtheorem{cor}{Corollary}[section]
\newtheorem{lem}{Lemma}[section]
\newtheorem*{cor**}{Corollary\theoremnum}
\newenvironment{cor*}[1][]{%
	\edef\theoremnum{\if\relax\detokenize{#1}\relax\else~#1\fi}
	\begin{cor**}
	}{%
	\end{cor**}
} 
\theoremstyle{definition}
\newtheorem{definition}{Definition}[section]
\newtheorem{ex}{Example}[section]
\theoremstyle{remark}
\newtheorem{rem}{Remark}
\author{Riccardo Ontani} 
\title{A Vafa-Intriligator formula for semi-positive quotients of linear spaces.}
\begin{document}
	\maketitle
	\begin{abstract}
		We consider genus zero quasimap invariants of smooth projective targets of the form $V/\!/G$, where $V$ is a representation of a reductive group $G$. In particular we consider integrals of cohomology classes arising as characteristic classes of the universal quasimap. In this setting, we provide a way to express the invariants of $V/\!/G$ in terms of invariants of $V/\!/T$, where $T$ is a maximal subtorus of $G$. Using this, we obtain residue formulae for such invariants as conjectured by Kim, Oh, Yoshida and Ueda. Finally, under some positivity assumptions on $V/\!/G$, we prove a Vafa-Intriligator formula for the generating series of such invariants, expressing them as finite sums of explicit contributions.
	\end{abstract}
	\subsection*{Notation}
	Consider a finite dimensional representation $G \curvearrowright V$ of a reductive algebraic group over $\mathbb{C}$. A character $\xi \in \chi(G)$ of $G$, which we will call \textit{stability parameter}, defines a linearisation $\mathcal{L}\rightarrow V$ and by GIT we obtain a geometric quotient $V^{\text{ss}}\rightarrow V/\!/G$ of the semistable locus. We will always work in the case where the semistable locus has trivial stabilisers. We also assume that $V/\!/T$ (and hence $V/\!/G$) is proper, where $T$ is a maximal subtorus of $G$. Here we specify some notation:
	\begin{itemize}
		\item We will denote with $\chi(G)^\vee$ the dual space of the character lattice $\chi(G)$. If $G = T$ then this coincides with the cocharacters lattice. Given two dual characters $\delta \in \chi(G)^\vee$ and $\tilde{\delta} \in \chi(T)^\vee$, we write $\tilde{\delta} \mapsto \delta$ whenever $\tilde{\delta}_{\vert \chi(G)} = \delta$.
		\item Let $\mathfrak{A}$ be the set of \textit{weights} for the action of $T$ on $V$. This is the set of characters $\rho\in \chi(T)$ for which there exists a nonzero $v \in V$ satisfying $t \cdot v = \rho(t)v$ for all $t \in T$. Let $V_\rho$ be the linear subspace of $V$ spanned by all such vectors $v$. The weight $\rho$ appears repeated $\text{dim}(V_\rho)$ times in $\mathfrak{A}$.
		\item Let $\Delta \subset \chi(T)$ be the set of \textit{roots} of $G$, i.e. the set of weights of the adjoint representation of this group. It's well known that the roots of $G$ come in opposite pairs. For each pair we pick one of the two roots and we call it a \textit{positive root}. Once a choice of positive roots is fixed, we will denote with $\Delta^+ \subset \Delta$ the set of positive roots.
	\end{itemize} 
	\section{Introduction}
	In this paper we consider genus zero $0^+$-stable graph quasimap invariants of targets of the form $V/\!/G$, with no marked points. These invariants arise from moduli spaces that serve as compactifications, introduced in \cite[Section 7.2]{KimCiocanMaulik}, of the spaces parameterising regular maps from $\mathbb{P}^1$ to $V/\!/G$ (or to more general GIT quotients). These spaces are different from the stable map compactifications used in Gromov–Witten theory, although the two are closely related by wall-crossing phenomena \cite{KimCiocanWallCrossing, zhou2022quasimap}. 
	
	These compactifications have provided an effective setting for performing a wide range of enumerative computations. For example, Szenes and Vergne \cite{SzenesVergne} used these spaces to prove the toric residue mirror symmetry conjecture of Batyrev and Materov \cite{BatyrevMaterov}, while Marian and Oprea \cite{MarianOpreaBundles} used known formulae for positive genus quasimap invariants of Grassmannians to recover the intersection theory of the moduli spaces of stable bundles on Riemann surfaces.
	
	In this paper we focus on three different aspects of the theory:
	\begin{enumerate}
		\item We prove an abelianisation formula to relate some quasimap invariants of $V/\!/G$ to invariants of $V/\!/T$.
		\item We prove a residue formula for quasimap invariants, well known for toric varieties, for targets of the form $V/\!/G$ where $G$ is not necessarily abelian.
		\item We generalise a closed formula for a generating series of quasimap invariants, known as the Vafa-Intriligator formula in the case of toric varieties and Grassmannians, to targets of the form $V/\!/G$ satisfying a positivity condition.
	\end{enumerate}
	\subsubsection{An abelianisation formula}
	Quasimap invariants of toric varieties have been widely studied in the past \cite{GiventalMirror, KimModuliToric, SzenesVergne}. In order to understand the more general case of quotients by a reductive group, a popular strategy consists of trying to relate the invariants of $V/\!/G$ to those of $V/\!/T$. This paper uses crucial results in this direction, mainly from Webb's research \cite{WebbAbelianNonAbelian} on the $I$-function, to derive some formulae for quasimap invariants of $V/\!/G$ by reducing the computation to the toric case. Our first result is an \textit{abelianisation formula for quasimap invariants} which relates single (i.e. for fixed degree) invariants of $V/\!/G$ to the invariants of $V/\!/T$:
	\begin{theorem*}[\ref{MainTheoremAbFor}]
		Consider a degree $\delta \in \chi(G)^\vee$ and a class $P \in A_G^\ast(\text{pt})$. The $\mathbb{C}^\ast$-equivariant quasimap invariant of $V/\!/G$ of degree $\delta$ with insertion $P$ can be computed as a sum of $\mathbb{C}^\ast$-equivariant quasimap invariants on $V/\!/T$ of degree $\tilde{\delta}\in \chi(T)^\vee$, where $\tilde{\delta}_{\vert \chi(G)} = \delta$:
		\begin{align*}
			\int_{[Q(V/\!/G, \delta)]^{\text{vir}}} \text{CW}^\delta(P) = \frac{1}{\vert W \vert} \sum_{\tilde{\delta}\mapsto \delta} \int_{[Q(V/\!/T, \tilde{\delta})]^{\text{vir}}} \text{CW}^{\tilde{\delta}}\left(P \prod_{\alpha \in \Delta} D(\tilde{\delta}, \alpha) \right)
		\end{align*} 
		where $W$ is the Weyl group of $G$. Here
		\begin{align*}
			\prod_{\alpha \in \Delta} D(\tilde{\delta}, \alpha) = \prod_{\alpha \in \Delta^+}(-1)^{\langle\tilde{\delta},\alpha\rangle+1} \alpha \cdot (\alpha + \langle \tilde{\delta},\alpha\rangle z)
		\end{align*}
		in $A^\ast_G(\text{pt})[z]$.
	\end{theorem*}
	Here $\text{CW}^\delta$ denotes the following homomorphism. Consider the universal quasimap of degree $\delta$ to $V/\!/G$
	\[\begin{tikzcd}
		\mathcal{P}_\delta \arrow[d, "\pi"] \arrow[r,"u"] & V\\
		\text{Q}(V/\!/G, \delta)\times\mathbb{P}^1 & 
	\end{tikzcd}\]
	where $\pi$ is a (left) $G$-principal bundle and $u$ is a $G$-equivariant morphism, mapping the generic fiber of $\pi$ to the semistable locus. Then $\text{CW}^\delta$ is the composition
	\begin{align*}
		A_G^\ast(\text{pt}) \rightarrow A_G^\ast(\mathcal{P})\xrightarrow{(\pi^\ast)^{-1}} A^\ast(\text{Q}(V/\!/G, \delta)\times \mathbb{P}^1) \xrightarrow{i^\ast} A^\ast(\text{Q}(V/\!/G, \delta)\times \infty).
	\end{align*}
	This morphism equals the usual Chern-Weil homomorphism of the (right) $G$-principal bundle $(\mathcal{P}_\delta)_{\vert \infty}$, defined through Chern-Weil theory (see Remark \ref{ChernWeilRemark}). The right G-action on $(\mathcal{P}_\delta)_{\vert \infty}$ is defined in terms of the left action via $p \cdot_{\text{right}} g := g^{-1}\cdot_{\text{left}} p$.
	\subsubsection{Jeffrey-Kirwan formulae}
	For toric targets, it is well known that the quasimap spaces can be realized as GIT quotients of linear spaces by torus actions (see Example~\ref{exampleToricqmaps1}). Building on this, Szenes and Vergne \cite{SzenesVergne} computed the toric quasimap invariants by applying Jeffrey–Kirwan localisation directly to these moduli spaces, thereby expressing the invariants as Jeffrey–Kirwan residues of explicit rational functions.
	By combining their results with the abelianisation formula from the previous section, we obtain a residue formula for the fixed degree $\mathbb{C}^\ast$-equivariant quasimap invariants of $V/\!/G$. This is described in the following theorem.
	\begin{theorem*}[\ref{MainTheoremSingleInvariants}]
		Let $\delta \in \chi(G)^\vee$ and $P \in A^\ast_G(\text{pt})$. Consider the linear space $\chi(T \times \mathbb{C}^\ast)_\mathbb{C}^\vee \simeq \chi(T)_\mathbb{C}^\vee \times \mathbb{C}$ and let $z$ be the coordinate on the second factor $\mathbb{C}$. Given $\tilde{\delta} \in \chi(T)^\vee$, consider the rational function
		\begin{align*}
			Z_{\tilde{\delta}} = \prod_{\rho \in \mathfrak{A}}D(\tilde{\delta}, \rho)^{-1}\prod_{\alpha \in \Delta}D(\tilde{\delta}, \alpha)
		\end{align*}
		on this linear space, where $D(-,-)$ is defined as
		\begin{align}\label{Dfunction}
			D(\tilde{\delta}, w):= \frac{\prod_{k=-\infty}^{\langle \tilde{\delta}, w \rangle }(w+kz)}{\prod_{k=-\infty}^{-1}(w+kz)} = \begin{cases}
				\prod_{k=\langle \tilde{\delta}, w \rangle+1}^{-1} (w+kz)^{-1} & \text{if } \langle \tilde{\delta}, w \rangle < -1\\
				1 & \text{if } \langle \tilde{\delta}, w \rangle=-1\\
				\prod_{k=0}^{\langle \tilde{\delta}, w \rangle} (w+kz) & \text{if } \langle \tilde{\delta}, w \rangle > -1
			\end{cases}
		\end{align}
		for all $w \in \chi(T)$.
		The $\mathbb{C}^\ast$-equivariant quasimap invariant of degree $\delta$
		\begin{align*}
			\int_{[Q(V/\!/G, \delta)]^{\text{vir}}} \text{CW}^\delta(P) \in \mathbb{C}[z]_z
		\end{align*} 
		evaluated at a generic $z \in \mathbb{C}$ coincides with the sum of Jeffrey-Kirwan residues
		\begin{align*}
			\frac{1}{\vert W \vert}\sum_{\tilde{\delta}\mapsto \delta} \sum_{\tilde{\delta}_1+ \tilde{\delta}_2= \tilde{\delta}} \text{JK}^{\mathfrak{A}}_{-z\tilde{\delta}_2} \left(Z_{\tilde{\delta}}(-, z) P \right).
		\end{align*}
		Here $Z_{\tilde{\delta}}(-,z)$ is the rational function on $\chi(T)_\mathbb{C}^\vee$ obtained by evaluating $Z_{\tilde{\delta}}$ at the chosen $z \in \mathbb{C}$.
	\end{theorem*}
	This settles a conjecture by Kim, Oh, Ueda and Yoshida \cite[Conjecture 10.10]{kimRMS} at least in the case of targets of the form $V/\!/G$. By carefully setting the equivariant parameter at zero we recover a residue formula for non-equivariant quasimap invariants:
	\begin{cor*}[\ref{nonequivariantInvariants}]
		Let $\delta \in \chi(G)^\vee$ and $P \in A^\ast_G(\text{pt})$. The non-equivariant quasimap invariant of degree $\delta$
		\begin{align*}
			\int_{[Q(V/\!/G, \delta)]^{\text{vir}}} \text{CW}^\delta(P)
		\end{align*} 
		is the sum of Jeffrey-Kirwan residues
		\begin{align*}
			\frac{1}{\vert W \vert}\sum_{\tilde{\delta}\mapsto \delta} \text{JK}^\mathfrak{A}_{O} \left( \tilde{Z}_{\tilde{\delta}} P \right)
		\end{align*}
		where $\tilde{Z}_{\tilde{\delta}}$ is the rational function on $\chi(T)_\mathbb{C}^\vee$ defined by
		\begin{align*}
			\tilde{Z}_{\tilde{\delta}} = \prod_{\rho \in \mathfrak{A}}\rho^{-1-\langle\tilde{\delta},\rho\rangle} \prod_{\alpha \in \Delta^+} (-1)^{1+\langle \tilde{\delta}, \alpha\rangle} \alpha^2.
		\end{align*}
	\end{cor*}
	\subsubsection{The Vafa-Intriligator formula}
	Up to this point, we have studied quasimap invariants of $V/\!/G$ for fixed degree. Now we consider the behaviour of quasimap invariants of $V/\!/G$ once they are arranged into generating series by summing over the degree. 
	In this part of the work we will assume that that the triple $(T,V,\xi)$ is \textit{semi-positive}, as defined in \cite[Section 1.4]{KimCiocanWallCrossing}. In this section we will mainly be interested in the formal sum
	\begin{align*}
		\langle P \rangle^G(q) := \vert W \vert \sum_{\delta \in \chi(G)^\vee} q^\delta \int_{[Q(V/\!/G, \delta)]^{\text{vir}}} \text{CW}^{\delta}(P).
	\end{align*}
	Formulae for the evaluation of this power series go under the name of\textit{ Vafa-Intriligator formulae}. In case the target $V/\!/G$ is a Grassmannian, these formulae were proven by Bertram \cite{bertram1994towards}, Siebert-Tian \cite{siebert1997quantum} and Marian-Oprea \cite{MarianOpreaVI}, who even established results in higher genus. Similar formulae for comuniuscule homogeneous spaces were found by Chaput-Manivel-Perrin in \cite{chaput2010quantum}. Here we'll find explicit results for semi-positive targets of the form $V/\!/G$.
	\begin{definition}
		Given a group $G$, we will denote with $\widecheck{G}$ the algebraic torus $\chi(G)_\mathbb{C}/\chi(G)$, which we call \textit{dual group of} $G$.
	\end{definition}
	Given $q=[\psi] \in \widecheck{G}$, setting $q^\delta := e^{2\pi i\langle \delta, \psi\rangle}$ we can think of the sum (\ref{QuasimapSeries}) as a function $\langle P \rangle^G : \widecheck{G} \dashrightarrow \mathbb{C}$ defined on its domain of convergence.
	Here we address the following questions:
	\begin{enumerate}
		\item\label{question1} Does $\langle P \rangle^G$ converge at any point of $\widecheck{G}$?
		\item\label{question2} If yes, can we find a closed expression for the function it converges to?
	\end{enumerate}
	Using the abelianisation formula and the analogous result for toric quotients due to Szenes and Vergne \cite{SzenesVergne}, we will answer positively to question \ref{question1} in Theorem \ref{theoremQuestion1}. Moreover, we will find a closed expression for the resulting function in Theorem \ref{theoremQuestion2}. This will be given in terms of the function
	\begin{align*}
		p : \chi(T)_\mathbb{C}^\vee \dashrightarrow \widecheck{T} \quad : \quad p(u):= \sum_{\rho \in \mathfrak{A}} \frac{\log\left(\rho(u)\right)}{2 \pi i}[\rho].
	\end{align*}
	whose fiber is finite for a generic $q\in \widecheck{T}$. 
	\begin{definition}
		Given $q \in \widecheck{T}$ we consider the sum
		\begin{align*}
			\sum_{w \in p^{-1}(q)} \frac{P(w) \prod_{\alpha \in \Delta}\alpha(w)}{D_{\mathfrak{A}}(w)\prod_{\rho \in \mathfrak{A}}\rho(w)},
		\end{align*}
		which is a well defined finite sum for a generic choice of $q$.
	\end{definition}
	\begin{theorem*}[\ref{theoremQuestion2}][Vafa-Intriligator formula]
		Consider a finite dimensional representation $V$ of a reductive group $G$ together with a GIT stability $\xi \in \chi(G)$ so that the action on the semistable locus is free and $V/\!/T$ is proper and semi-positive, where $T \subseteq G$ is a maximal subtorus. Given a class $P \in A^\ast_G(\text{pt})$, the sum above extends to a rational function
		\begin{align*}
			\langle P\rangle_{\mathfrak{B}} : \widecheck{T} \dashrightarrow \mathbb{C}.
		\end{align*}
		Let $q \in \widecheck{G}$ belong to the domain of convergence of the power series $\langle P \rangle^G$. Then it belongs to the domain of definition of $\langle P \rangle_{\mathfrak{B}}$ and 
		\begin{align*}
			\langle P \rangle^G(q) = \langle P \rangle_\mathfrak{B}(\sigma(q)).
		\end{align*}
	\end{theorem*}
	Here $\sigma$ is an involution (\ref{sigmaInvolution}) of $\widecheck{T}$ and $D_\mathfrak{A}$ is a rational function on $\chi(T)^\vee_\mathbb{C}$ obtained as Jacobian determinant of $p$, once expressed in some local coordinates (see Lemma \ref{JacobianP}). The self contained subsection \ref{subsectionVafaIntriligatorFormula} explains this formula in detail.
	\subsection{Outline of the proofs}
	Here we outline the arguments we'll use to prove the results introduced above.
	\subsubsection{Proof of the abelianisation formula}
	First of all, we use the $\mathbb{C}^\ast$-action on the moduli space $Q(V/\!/G,\delta)$ to localise the integrals to the fixed loci. These fixed loci are made of simpler building blocks $F_{\delta, 0}$ and $F_{0, \delta}$, which are substacks of $Q(V/\!/G,\delta)$ endowed with an evaluation morphism to $V/\!/G$. The fixed locus is a disjoint union of fiber products of the form
	\begin{align*}
		Q(V/\!/G,\delta)^{\mathbb{C}^\ast} = \coprod_{\delta_1+\delta_2 = \delta} F_{\delta_1, \delta_2}\quad , \quad F_{\delta_1, \delta_2} := F_{\delta_1, 0} \times_{V/\!/G} F_{0,\delta_2}
	\end{align*}
	and we are left with computing the contribution of such loci to the integral. The localised virtual fundamental classes of these fixed loci preserve the fiber product structure (see \cite[Section 4]{KimCiocanWallCrossing}, recalled in Proposition \ref{splitting_virtual_classes}) and the same is true for the restriction of the Chern classes of the universal principal bundle (Proposition \ref{splittingOfCW}), therefore we manage to split the pushforward of the localised class from $F_{\delta_1, \delta_2}$ to $V/\!/G$ as a product of pushforwards of classes originally defined on $F_{\delta_1, 0}$ and $F_{0, \delta_2}$ respectively:
	\begin{align*}
		&\text{ev}_\ast \left(\text{contribution from }F_{\delta_1, \delta_2}\right)\\
		= &\text{ev}_\ast\left(\text{contribution from } F_{\delta_1, 0}\right) \cdot \text{ev}_\ast\left(\text{contribution from } F_{0,\delta_2}\right).
	\end{align*}
	Then we adapt an argument of Webb \cite{WebbAbelianNonAbelian} to compute the two factors coming from $F_{\delta_1, 0}$ and $F_{0,\delta_2}$ and to relate them to quasimap invariants of $V/\!/T$, thus recovering the abelianisation formula of Theorem \ref{MainTheoremAbFor}.
	\subsubsection{Proof of the Jeffrey-Kirwan formulae}
	Now we have reduced nonabelian quasimap invariants to abelian ones. Since quasimap spaces for abelian targets are toric varieties, we can use the Jeffrey-Kirwan localisation formula \cite{JeffreyKirwan} in the form of Brion, Szenes and Vergne \cite[Proposition 2.3]{SzenesVergne} to express the integrals as JK residues \cite{BrionVergne}. As a result we obtain the JK formula of Theorem \ref{MainTheoremSingleInvariants} for $\mathbb{C}^\ast$-equivariant quasimap invariants.
	
	Here we remark that we could have obtained the same formula directly, without passing to the quasimap spaces for abelian targets. By following the same argument outlined for the proof of the abelianisation formula, we could have pushed the computation onto the target $V/\!/G$. Then, nonabelian JK localisation (for example in the form of \cite{OntaniJKforDT}) would have expressed the integral as a JK residue recovering the same formula above.
	
	The reason why we find it convenient to pass to the moduli spaces for the abelian targets is that this makes it easier to prove the non-equivariant version of this result, contained in Corollary \ref{nonequivariantInvariants}. Indeed, the abelianisation formula also holds for nonequivariant invariants by simply setting the equivariant parameter $z$ to zero, and then we can compute the nonequivariant abelian quasimap invariants by JK localisation. 
	\subsubsection{Proof of the Vafa-Intriligator formula}
	In \cite[Theorem 4.1]{SzenesVergne}, by using the nonequivariant JK formulae described above, Szenes and Vergne prove the Vafa-Intriligator formula for toric targets. In the appendix \ref{sectionSzenesVergne} we recall their result and, in Theorem \ref{SzenesVergneVers1}, we express it into a slightly different form better fit for our applications. For a general target of the form $V/\!/G$, by using the abelianisation formula we can express the generating series $\langle P \rangle^G$ as the restriction (twisted by an involution) to $\widecheck{G}$ of the corresponding abelian generating series $\langle P \rangle^T$ defined in (\ref{QuasimapSeriesAb}) on the torus $\widecheck{T}$ (Theorem \ref{AbelianisationSeries}). By \cite{SzenesVergne} we know that $\langle P \rangle^T$ converges somewhere and we prove that this domain of convergence intersects $\widecheck{G}$, thus showing that $\langle P \rangle^G$ converges on a analytic open subset of $\widecheck{G}$ (Theorem \ref{theoremQuestion1}). Moreover, the work of Szenes and Vergne gives an explicit formula for $\langle P \rangle^T$ on its domain of convergence, therefore we obtain the Vafa-Intriligator formula for $\langle P \rangle^G$ in the form of Theorem \ref{theoremQuestion2}.
	\section*{Acknowledgements}
	I wish to thank Richard Thomas, Jeongseok Oh, Shubham Sinha, Rachel Webb and Weihong Xu for many discussions related to this work. Finally, I would like to express my gratitude to Denis Nesterov and Dragos Oprea for their comments on the paper.
	\tableofcontents
	\section{Quasimap invariants}\label{SectionQuasimapInvariants}
	Following the notation of \cite[Section 2.1]{WebbAbelianNonAbelian}, for us a $G$-principal bundle on a scheme $X$ will be a scheme $P$ with a free left $G$-action and a $G$-invariant map to $X$ that is locally trivial in the \'etale topology. All principal bundles have a left action, unless we directly specify that we are considering a right action.
	\subsection{The Chern-Weil morphism}
	Given $\delta \in \chi(G)^\vee$ we will denote with $Q(V/\!/G,\delta)$ the moduli stack of $0^+$-stable genus zero unmarked graph quasimaps to $V/\!/G$ as described, for example, in \cite[Section 2.6]{KimCiocanWallCrossing}.
	Here we recall a way to define cohomology classes on quasimap spaces. First of all, notice that since a graph quasimap is a couple of a principal $G$-bundle $P\rightarrow \mathbb{P}^1$ and a $G$-equivariant map $P\rightarrow V$, there is a universal quasimap
	\[\begin{tikzcd}
		\mathcal{P}_\delta \arrow[d, "\pi"] \arrow[r,"u"] & V\\
		\text{Q}(V/\!/G, \delta)\times\mathbb{P}^1 & 
	\end{tikzcd}\]
	where $\pi$ is a principal $G$-bundle and $u$ is $G$-equivariant. Chosen a point $p \in \mathbb{P}^1$ we can restrict the diagram above to $\text{Q}(V/\!/G, \delta)\times\lbrace p \rbrace$:
	\begin{equation}\label{CWdiagram}
		\begin{tikzcd}
			\mathcal{P}_{\delta\vert p} \arrow[d, "\pi"] \arrow[r,"u_p"] & V\\
			\text{Q}(V/\!/G, \delta)& 
		\end{tikzcd}
	\end{equation}
	where $\mathcal{P}_{\delta\vert p}$ is the restriction of $\mathcal{P}_\delta$. In particular, we can consider the \textit{Chern-Weil homomorphism} defined as
	\begin{align}\label{CWnoneq}
		CW^\delta : A^\ast_G(\text{pt}) \rightarrow A^\ast_G(\mathcal{P}_{\delta\vert p}) \xrightarrow{d} A^\ast(\text{Q}(V/\!/G, \delta))
	\end{align}
	where $d$ is the isomorphism of \cite[Theorem 4]{EdidinEIT} for the free action of $G$ on $\mathcal{P}_{\delta \vert p}$ (this is the isomorphism saying that a $G$-equivariant Chow class on the total space of the $G$-principal bundle $\mathcal{P}_{\delta \vert p}$ corresponds to a class on the base $Q(V/\!/G, \delta))$.
	\begin{rem}\label{ChernWeilRemark}
		Notice that $\text{CW}^\delta$ is the usual Chern-Weil homomorphism given by pulling back classes through the classifying map to $BG$ induced by the \textbf{right} $G$-principal bundle $\mathcal{P}_{\delta \vert p}$. This is easy to see in the case where $G$ is a torus, while the general case follows from the splitting principle. Indeed, consider a left principal $G$-bundle $P \rightarrow X$ and the composition
		\begin{align*}
			\text{CW} : A^\ast_G(\text{pt}) \rightarrow A^\ast_G(P) \simeq A^\ast(X).
		\end{align*}
		In the case of $G:=\mathbb{C}^\ast$ we find by construction that the classical Chern-Weil homomorphism maps $t$ into the first Chern class of the vector bundle associated to the right principal bundle $\mathcal{P}$, whose total space is the quotient of $P \times \mathbb{C}$ by the action
		\begin{align*}
			\mathbb{C}^\ast \curvearrowright P \times \mathbb{C} \quad : \quad t\cdot (p, z) := (p \cdot t^{-1}, t z) = (t \cdot p, tz).
		\end{align*}
		In particular, if $G= \text{GL}_r$, and $P$ is the frame bundle of a vector bundle $E$, endowed with the usual left $G$-action, then $\text{CW}$ is the Chern-Weil morphism of the dual bundle $E^\vee$. In other words, the morphism $\text{CW}$ maps a symmetric polynomial $P(x_1, \dots, x_r)$ to the cohomology class $P(\alpha_1, \dots, \alpha_r)$ of $X$ given by evaluating $x_i$ at the Chern roots $\alpha_i$ of the dual vector bundle $E^\vee$.
	\end{rem}
	\begin{rem}
		Notice that the morphism $\text{CW}^\delta$ doesn't depend on the choice of $p\in \mathbb{P}^1$ since different points yield homotopic maps, hence the same pullback in cohomology.
	\end{rem}
	In order to produce invariants from these cohomology classes we need a Chow homology class to pair with. Luckily, these moduli spaces carry a perfect obstruction theory and a virtual fundamental class $[Q(V/\!/G, \delta)]^{\text{vir}}$ as discussed in \cite[Theorem 7.2.2]{KimCiocanMaulik}. This class is built in terms of the universal quasimap so that the virtual tangent bundle is $R{\pi_1}_\ast u^\ast T_{V/G}$ where $u$ is considered as a map from $\text{Q}(V/\!/G, \delta)\times\mathbb{P}^1$ to the quotient stack $V/G$.
	Now we can give the following definition:
	\begin{definition}
		Let $P \in A_G^\ast(\text{pt})$. The \textit{graph-quasimap invariant} of $V/\!/G$ of class $\delta$ with insertion $P$ is the number
		\begin{align*}
			\int_{[Q(V/\!/G, \delta)]^{\text{vir}}} \text{CW}^\delta(P) \in \mathbb{Q}.
		\end{align*}
	\end{definition}
	\begin{ex}\label{exampleToricqmaps1}
		Here we recall the explicit form of the quasimap spaces in the case where $G=T$ is a torus. This construction goes back to Morrison-Plesser \cite{MorrisonPlesser} and Givental \cite[Section 5]{GiventalMirror}. We can diagonalise the action and consider the case of $(\mathbb{C}^\ast)^n \curvearrowright \mathbb{C}^m$. We have the isomorphism $\text{Pic}^T(V)\simeq \chi(T)^\vee\simeq \mathbb{Z}^n$ and a quasimap of degree $a = (a_1, ..., a_n)$ is (isomorphic to) a diagram of the form
		\[\begin{tikzcd}
			\mathcal{O}(-a_1) \oplus \dots \oplus \mathcal{O}(-a_n) \arrow[r, "\varphi"]\arrow[d, "\pi"]& \mathbb{C}^m\\
			\mathbb{P}^1 &
		\end{tikzcd}\]
		In other words, a $0^+$-stable genus zero graph quasimaps of degree $a$ identifies a section of the bundle $\mathcal{O}(-a_1) \oplus \dots \oplus \mathcal{O}(-a_n) \times_T \mathbb{C}^m$, hence an element of the linear space of sections
		\begin{align}\label{quasimapSections}
			L(a):= \oplus_{j=1}^m H^0\left(\mathbb{P}^1, \mathcal{O}(\langle a, \rho_j\rangle)\right)
		\end{align}
		where $\rho_1, \dots, \rho_m \in \chi(T)$ are the weights for the $T$-action on $\mathbb{C}^m$. Not all such sections are stable quasimaps: the stability condition identifies an open subscheme 
		\begin{align}\label{quasimapSectionsSS}
			\left(\oplus_{j=1}^m H^0\left(\mathbb{P}^1, \mathcal{O}(\langle a, \rho_j\rangle)\right)\right)^{\text{ss}}
		\end{align}
		of sections that determine stable quasimaps. Moreover, the isomorphism classes correspond to the $T$-orbits in this open subscheme with respect to the action induced by
		\begin{align*}
			T \curvearrowright H^0\left(\mathbb{P}^1, \mathcal{O}(\langle a, \rho_j\rangle)\right) \quad : \quad t\cdot s := \rho_j(t) s.
		\end{align*}
		It's immediate to check that since $\xi \in \chi(T)$ is such that the $T$-action is free on the semistable locus of $\mathbb{C}^m$, then the same is true when $\xi$ is thought as a linearisation for the newly defined $T$-action on (\ref{quasimapSections}). It's possible to show that (\ref{quasimapSectionsSS}) is precisely the semistable locus for the action with this choice of linearisation $\xi$. This shows that the quotient $L(a)/\!/T$ is a coarse moduli space for the quasimap functor. It's easy to see that this space comes with a universal family of quasimaps, namely
		\begin{equation}\label{toricUniversalQmap}
			\begin{tikzcd}
				L(a)^{\text{ss}} \times_{\mathbb{C}^\ast} \mathcal{O}(-1) \arrow[r, "u"] \arrow[d, "\pi"] & \mathbb{C}^m\\
				L(a)/\!/T \times \mathbb{P}^1 &
			\end{tikzcd}
		\end{equation}
		where $\mathbb{C}^\ast$ acts on $L(a)^{\text{ss}}$ by mapping to $T$ via the cocharacter $a$, or in other words
		\begin{align*}
			\mathbb{C}^\ast \curvearrowright H^0\left(\mathbb{P}^1, \mathcal{O}(\langle a, \rho_j\rangle)\right) \quad : \quad \tau\cdot s := \tau^{\langle a, \rho_j\rangle} s.
		\end{align*}
		so that the morphism $u$ is given by the evaluation $[s, (X,Y)] := s(X,Y)$ of the section, thought as a function on the total space of $\mathcal{O}(-1)$. Notice that the dimension of the quasimap space $L(a)/\!/T$ is often greater than the expected (or virtual) dimension of the moduli space of maps from $\mathbb{P}^1$ of degree $a$:
		\begin{align*}
			\text{dim}(L(a)/\!/T) &= \sum_{j\vert \langle a, \rho_j \rangle \geq 0} (1+ \langle a, \rho_j\rangle) - n\\
			&\geq \sum_{j=1}^m (1+ \langle a, \rho_j\rangle) - n = \text{virtual dim}(L(a)/\!/T).
		\end{align*}
		It's easy to show from its definition that the virtual fundamental class is
		\begin{align*}
			[L(a)/\!/T]^\text{vir} = \text{CW}^{a}\left(\prod_{\substack{j \,\,\vert \,\, \langle a, \rho_j \rangle < 0}}\rho_j^{-1-\langle a, \rho_j \rangle}\right) \cap [L(a)/\!/T],
		\end{align*}
		and it takes care of this difference in the dimensions.
	\end{ex}
	\subsection{Equivariant quasimap invariants}\label{SectionEquivariantQuasimapInvariants}
	The moduli space $\text{Q}(V/\!/G, \delta)$ inherits a $\mathbb{C}^\ast$-action from the action $s\cdot[x_0 : x_1]:= [sx_0,x_1]$ on the domain $\mathbb{P}^1$: given a quasimap, the action of $t \in \mathbb{C}^\ast$ on it can be described as follows:
	\[ 
	t\cdot 
	\begin{tikzcd}
		P \arrow[r, "\varphi"]\arrow[d, "\pi"] & V\\
		\mathbb{P}^1 & 
	\end{tikzcd}
	= 
	\begin{tikzcd}
		P \arrow[r, "\varphi"]\arrow[d, "t \cdot \pi"] & V\\
		\mathbb{P}^1 & 
	\end{tikzcd}.
	\]
	This description extends immediately to families of quasimaps, and by the universal property of the quasimap space it lifts to a $\mathbb{C}^\ast$-action on the universal family $\mathcal{P}_\delta$ which makes the morphism $u : \mathcal{P}_\delta \rightarrow V$ invariant. By restricting to a fixed point $p \in \lbrace 0, \infty \rbrace \in \mathbb{P}^1$ we find a $\mathbb{C}^\ast$-equivariant structure on $\mathcal{P}_{\delta \vert p}$ and the map $u_p$ in the diagram (\ref{CWdiagram}) is invariant.
	As in the nonequivariant case we can define the Chern-Weil homomorphism
	\begin{align}\label{CWeq}
		CW^\delta_p : A^\ast_{G\times \mathbb{C}^\ast}(\text{pt}) \rightarrow A^\ast_{G \times \mathbb{C}^\ast}(\mathcal{P}_{\delta\vert p}) \xrightarrow{d} A^\ast_{\mathbb{C}^\ast}(\text{Q}(V/\!/G, \delta)).
	\end{align}
	Notice that, unlike the nonequivariant case, the choice of $p \in \lbrace 0, \infty \rbrace$ yields two different morphisms $\text{CW}^\delta_0 \neq \text{CW}^\delta_\infty$. We will fix the choice of $p=\infty$ and work with $\text{CW}^\delta:= \text{CW}^\delta_\infty$.
	\begin{definition}
		Let $P \in A_G^\ast(\text{pt})$. The $\mathbb{C}^\ast$-equivariant \textit{graph-quasimap invariant} of $V/\!/G$ of class $\delta$ with insertion $P$ is the number
		\begin{align*}
			\int_{[Q(V/\!/G, \delta)]^{\text{vir}}} \text{CW}^\delta(P) \in \mathbb{Q}[z],
		\end{align*}
		where $[Q(V/\!/G,\delta)]^{\text{vir}}$ is the equivariant lift of the virtual fundamental class described in \cite[Section 7.3]{KimCiocanMaulik} and $z$ is the equivariant variable for the $\mathbb{C}^\ast$-action.
	\end{definition} 
	\begin{ex}\label{exampleToricqmaps2}
		We continue the example of the previous section by enhancing the picture with the $\mathbb{C}^\ast$-action we just defined. The action on the quasimap moduli space comes from the action on $L(a)$ given by
		\begin{align}\label{actionOnLc}
			\mathbb{C}^\ast \curvearrowright H^0(\mathbb{P}^1, \mathcal{O}(\langle a, \rho_j\rangle)) \quad : \quad s \cdot X^\alpha Y^\beta := s^\beta X^\alpha Y^\beta.
		\end{align}
		If we lift the action from $\mathbb{P}^1$ to the tautological $\mathbb{C}^\ast$-bundle $\mathcal{O}(-1)$ by setting $s\cdot(X,Y):= (X, s^{-1}Y)$ we obtain an equivariant structure on the universal quasimap (\ref{toricUniversalQmap}) so that $u$ is invariant. By restricting the universal quasimap to $L(a)/\!/T \times \lbrace \infty \rbrace$ we obtain the diagram
		\[\begin{tikzcd}
			L(a)^\text{ss} \arrow[r, "u_\infty"] \arrow[d, "\pi"] & \mathbb{C}^m\\
			L(a)/\!/T &
		\end{tikzcd}\]
		where the map $u_\infty$ is simply the evaluation at $(1,0)$.
	\end{ex}
	\subsection{The fixed loci}\label{SectionTheIFunction}
	The key ingredient that makes equivariant invariants easier to compute is $\mathbb{C}^\ast$-localisation. 
	A good application of this technique always starts with a description of the fixed locus $Q(V/\!/G, \delta)^{\mathbb{C}^\ast}$. Given $p \in \lbrace 0 , \infty \rbrace$ consider the substack $Q(V/\!/G, \delta)_p$ parameterising quasimaps with only $p$ as a basepoint and such that the induced map to $V/\!/G$ is constant. To each of these quasimaps one can associate the corresponding point in $V/\!/G$ obtaining an evaluation map
	\begin{align*}
		\text{ev}:Q(V/\!/G, \delta)_p \rightarrow V/\!/G.
	\end{align*}
	The fixed locus of the $\mathbb{C}^\ast$-action is described by the following result \cite[Section 4]{KimCiocanWallCrossing}
	\begin{pro}\label{SplitFixedLocus}
		Let $\delta_1, \delta_2 \in \chi(G)^\vee$ be so that $\delta = \delta_1 + \delta_2$. The fiber product
		\begin{align}\label{fixedLocusSplits}
			F_{\delta_1, \delta_2} := Q(V/\!/G, \delta_1)_0 \times_{V/\!/G} Q(V/\!/G, \delta_2)_\infty 
		\end{align}
		over the evaluation maps admits a closed embedding into $Q(V/\!/G, \delta)$ as a fixed subvariety. Moreover, all the fixed quasimaps are of this form:
		\begin{align*}
			Q(V/\!/G, \delta)^{\mathbb{C}^\ast} = \coprod_{\delta_1+ \delta_2 = \delta} F_{\delta_1, \delta_2}.
		\end{align*}
	\end{pro}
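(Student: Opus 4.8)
The plan is to analyze the $\mathbb{C}^\ast$-fixed quasimaps directly and match them with the fiber products $F_{\delta_1,\delta_2}$. First I would recall that a $0^+$-stable genus zero graph quasimap consists of a principal $G$-bundle $P\to\mathbb{P}^1$ together with a $G$-equivariant section, and that the $\mathbb{C}^\ast$-action rotates $\mathbb{P}^1$ fixing $0$ and $\infty$. A fixed point of this action is therefore a quasimap isomorphic to its own $\mathbb{C}^\ast$-translates; concretely this forces the bundle to be $\mathbb{C}^\ast$-equivariant and the section to be $\mathbb{C}^\ast$-invariant up to the induced $G$-gauge automorphism. The first step is to show that any such fixed quasimap is unbased away from $\{0,\infty\}$: over the open orbit $\mathbb{P}^1\setminus\{0,\infty\}$ the section must land in the semistable locus (otherwise there would be a one-parameter family of basepoints contradicting the finiteness of the basepoint scheme), and $\mathbb{C}^\ast$-invariance of the induced map $\mathbb{P}^1\setminus\{0,\infty\}\to V/\!/G$ forces it to be constant there, hence (by properness and continuity) constant on all of $\mathbb{P}^1$. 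This gives a well-defined point $x\in V/\!/G$ which is the common value, and all the "interesting" behaviour of the quasimap is concentrated at the two fixed points $0$ and $\infty$.

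Next I would decompose the degree. Restricting a fixed quasimap to a formal (or étale) neighbourhood of $0$ and of $\infty$ gives two quasimaps, each with a single possible basepoint, at $0$ and at $\infty$ respectively; call their degrees $\delta_1$ and $\delta_2$. Additivity of the degree (which is a discrete invariant, constant in families and additive under this kind of nodal/local decomposition) yields $\delta_1+\delta_2=\delta$. Moreover each local piece, by construction, induces the constant map to $x\in V/\!/G$ — so the restriction to a neighbourhood of $0$ is an object of $Q(V/\!/G,\delta_1)_0$ and the restriction to a neighbourhood of $\infty$ is an object of $Q(V/\!/G,\delta_2)_\infty$, and they agree after evaluation to the common point $x$. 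This produces the morphism from $Q(V/\!/G,\delta)^{\mathbb{C}^\ast}$ to $\coprod_{\delta_1+\delta_2=\delta} F_{\delta_1,\delta_2}$. Conversely, given a point of $F_{\delta_1,\delta_2}$, i.e.\ a pair of quasimaps based only at $0$ (resp.\ $\infty$) agreeing over $V/\!/G$, one glues them along the complement of the basepoints — where both induce the same constant map, so the underlying $G$-bundles and sections are canonically identified with the trivial/constant data twisted by $x$ — to obtain a genuine quasimap of degree $\delta_1+\delta_2=\delta$, which is manifestly $\mathbb{C}^\ast$-fixed since each local piece is fixed and the gluing is $\mathbb{C}^\ast$-equivariant. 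These two constructions are mutually inverse, giving the claimed bijection on points; to upgrade to an isomorphism of stacks one checks the constructions are functorial in families, which is routine from the modular descriptions. That the resulting locus is closed and embeds as a fixed substack follows from general $\mathbb{C}^\ast$-fixed-point theory (the fixed locus of a $\mathbb{C}^\ast$-action on a Deligne--Mumford stack is a closed substack).

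The main obstacle I expect is the gluing/descent step in the "conversely" direction: one must verify carefully that the $G$-bundle on $\mathbb{P}^1$ obtained by gluing the two local bundles along $\mathbb{P}^1\setminus\{0,\infty\}$ is well-defined, that the section glues to a $G$-equivariant morphism to $V$ with basepoints only at $0$ and $\infty$ (so that the $0^+$-stability is preserved), and that this is compatible with families so as to give a morphism of moduli stacks and not merely a bijection on closed points. The subtlety is that the identification of the two bundles over the open part uses the fact that both local quasimaps induce the \emph{same} constant map to $V/\!/G$ — this is precisely what the fiber product over the evaluation maps encodes — and one should phrase the gluing functorially, perhaps by first trivialising over the two standard charts of $\mathbb{P}^1$ and tracking the transition function. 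Since this is exactly the content of \cite[Section 4]{KimCiocanWallCrossing}, in the write-up I would state the bijection, indicate the gluing construction, and refer there for the verification that it is an isomorphism of stacks.
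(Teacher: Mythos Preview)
Your proposal is correct and matches the paper's treatment: the paper does not give an independent proof of this proposition but cites \cite[Section 4]{KimCiocanWallCrossing} for it, and then in Remark~\ref{defineInjection} spells out exactly the gluing construction you describe (identifying the two principal bundles over $\mathbb{C}^\ast\subset\mathbb{P}^1$ via the isomorphisms $u_i:\mathcal{P}_{\delta_i}\to\text{ev}^\ast V(G)^{\text{ss}}$ induced by the evaluation maps). Your sketch of why a fixed quasimap has basepoints only at $0,\infty$ and constant induced map is more detailed than what the paper records, but the overall strategy and the deferral to the same reference are identical.
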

	Notice that, in our notation, $F_{\delta_1, 0} \simeq Q(V/\!/G, \delta_1)_0$.
	\begin{rem}\label{defineInjection}
		Proposition \ref{SplitFixedLocus} shows the existence of an injection of $F_{\delta_1,\delta_2}$ into $Q(V/\!/G,\delta)$, but it will be important for us to know how this is defined, namely how the universal quasimap pulls back from $Q(V/\!/G,\delta)$ to $F_{\delta_1, \delta_2}$. By definition we have the following fibered diagram
		\begin{equation}\label{fixed_fiber_diagram}
			\begin{tikzcd}
				F_{\delta_1, \delta_2} \arrow[rd, phantom, "\square"] \arrow[r, "p_2"] \arrow[d, "p_1"] & F_{\delta_1, 0} \arrow[d, "\text{ev}"] \\
				F_{0, \delta_2} \arrow[r, "\text{ev}"] & V/\!/G.
			\end{tikzcd}
		\end{equation}
		Consider the universal quasimaps $u_1 : \mathcal{P}_{\delta_1} \rightarrow V$ and $u_2 : \mathcal{P}_{\delta_2} \rightarrow V$ on $F_{\delta_1, 0}\times\mathbb{P}^1$ and $F_{0,\delta_2}\times\mathbb{P}^1$ respectively. They restrict, over $\mathbb{C}^\ast \subset \mathbb{P}^1$, to isomorphisms of principal bundles
		\begin{align*}
			\mathcal{P}_{\delta_1\vert F_{\delta_1, 0}\times \mathbb{C}^\ast} \xrightarrow{u_1} \text{ev}^\ast V(G)^\text{ss} \quad , \quad \mathcal{P}_{\delta_2\vert F_{0,\delta_2}\times \mathbb{C}^\ast} \xrightarrow{u_2} \text{ev}^\ast V(G)^\text{ss},
		\end{align*}
		where $V(G)^\text{ss} \rightarrow V/\!/G$ is thought as a $G$-principal bundle. By pulling back onto the fiber product $F_{\delta_1, \delta_2}$, we can use these isomorphisms to identify the quasimaps on these open subsets and, thus, to glue the quasimaps
		\begin{align*}
			p_1^\ast \mathcal{P}_{\delta_1\vert F_{\delta_1, \delta_2}\times \mathbb{C}_0} \xrightarrow{u_1} V \quad , \quad p_2^\ast \mathcal{P}_{\delta_2\vert F_{\delta_1,\delta_2}\times \mathbb{C}_\infty} \xrightarrow{u_2} V
		\end{align*}
		to a global quasimap $\mathcal{P} \rightarrow V$ on $F_{\delta_1, \delta_2} \times \mathbb{P}^1$, which defines an embedding into $Q(V/\!/G,\delta)$.
	\end{rem}
	\subsubsection{Virtual classes}
	The localised virtual fundamental classes of these fixed loci respect the product structure, as explained in \cite[Section 4]{KimCiocanWallCrossing}.
	\begin{pro}\label{splitting_virtual_classes}
		The localised virtual fundamental classes satisfy
		\begin{align*}
			\frac{[F_{\delta_1, \delta_2}]^{\text{vir}}}{e^{\mathbb{C}^\ast}(N^\text{vir}_{F_{\delta_1, \delta_2}})} = \frac{[F_{\delta_1, 0}]^{\text{vir}}}{e^{\mathbb{C}^\ast}(N^\text{vir}_{F_{\delta_1, 0}})} \times_{V/\!/G} \frac{[F_{0, \delta_2}]^{\text{vir}}}{e^{\mathbb{C}^\ast}(N^\text{vir}_{F_{0, \delta_2}})},
		\end{align*}
		where $\times_{V/\!/G}$ denotes the Gysin pullback along the diagonal of $V/\!/G$.
	\end{pro}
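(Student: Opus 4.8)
The plan is to follow the argument establishing the splitting of the obstruction theory over the fixed locus in \cite[Section~4]{KimCiocanWallCrossing}. Recall that the perfect obstruction theory of $Q(V/\!/G,\delta)$ has virtual tangent complex $R\pi_{1\ast}\,u^\ast T_{V/G}$, where $u\colon Q(V/\!/G,\delta)\times\mathbb{P}^1\to V/G$ is the universal map to the quotient stack and $\pi_1$ is the projection to $Q(V/\!/G,\delta)$. By the virtual localisation formalism of Graber--Pandharipande, each component $F_{\delta_1,\delta_2}$ of the fixed locus acquires a perfect obstruction theory from the $\mathbb{C}^\ast$-fixed part of the restriction of this complex, its virtual normal bundle $N^{\text{vir}}_{F_{\delta_1,\delta_2}}$ is the moving part, and the localised class appearing on the left-hand side is $[F_{\delta_1,\delta_2}]^{\text{vir}}/e^{\mathbb{C}^\ast}(N^{\text{vir}}_{F_{\delta_1,\delta_2}})$. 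Hence it is enough to understand how this restricted complex decomposes.

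By the description of the embedding $F_{\delta_1,\delta_2}\hookrightarrow Q(V/\!/G,\delta)$ recalled in Remark~\ref{defineInjection}, the universal quasimap over $F_{\delta_1,\delta_2}\times\mathbb{P}^1$ is obtained by gluing the pullback of $\mathcal{P}_{\delta_1}$ over the chart $\mathbb{C}_0$ to the pullback of $\mathcal{P}_{\delta_2}$ over the chart $\mathbb{C}_\infty$, the two being identified over $\mathbb{C}^\ast$ with $\text{ev}^\ast V(G)^{\text{ss}}$. Since the action on the semistable locus is free, the quotient stack $V^{\text{ss}}/G$ is isomorphic to $V/\!/G$, so over $\mathbb{C}^\ast$ one has $u^\ast T_{V/G}=\text{ev}^\ast T_{V/\!/G}$; and as the induced map $\mathbb{P}^1\to V/\!/G$ is constant, $\text{ev}$ may be computed at a $\mathbb{C}^\ast$-fixed point of $\mathbb{P}^1$, so $\text{ev}^\ast T_{V/\!/G}$ carries the trivial $\mathbb{C}^\ast$-weight. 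Pushing the Mayer--Vietoris sequence for this gluing forward along $\pi_1$ — treating the non-proper charts $\mathbb{C}_0$, $\mathbb{C}_\infty$ as in \cite[Section~4]{KimCiocanWallCrossing} so that they contribute finite-rank complexes after pushforward — one obtains a distinguished triangle
\[
R\pi_{1\ast}u^\ast T_{V/G}\big|_{F_{\delta_1,\delta_2}}\longrightarrow p_2^\ast\!\left(R\pi_{1\ast}u_1^\ast T_{V/G}\right)\oplus p_1^\ast\!\left(R\pi_{1\ast}u_2^\ast T_{V/G}\right)\longrightarrow \text{ev}^\ast T_{V/\!/G}\xrightarrow{\ [1]\ }
\]
where $u_1$, $u_2$ are the universal quasimaps over $F_{\delta_1,0}\times\mathbb{P}^1$ and $F_{0,\delta_2}\times\mathbb{P}^1$, the middle terms are the restricted virtual tangent complexes of these two factors, and the last term is $\mathbb{C}^\ast$-fixed. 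This is the precise analogue of the node-smoothing triangle in the splitting axiom for quasimap invariants.

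Taking $\mathbb{C}^\ast$-fixed parts of this triangle, and using that $\text{ev}^\ast T_{V/\!/G}$ is entirely fixed, one recovers exactly the defining triangle of the canonical perfect obstruction theory on $F_{\delta_1,0}\times_{V/\!/G}F_{0,\delta_2}$ over the smooth base $V/\!/G$, built from those of the two factors; by the standard compatibility of virtual fundamental classes with fiber-product obstruction theories over a smooth base (Behrend--Fantechi) this gives $[F_{\delta_1,\delta_2}]^{\text{vir}}=[F_{\delta_1,0}]^{\text{vir}}\times_{V/\!/G}[F_{0,\delta_2}]^{\text{vir}}$, the Gysin pullback along the diagonal. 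Taking moving parts, the overlap term contributes nothing, so $N^{\text{vir}}_{F_{\delta_1,\delta_2}}\simeq p_2^\ast N^{\text{vir}}_{F_{\delta_1,0}}\oplus p_1^\ast N^{\text{vir}}_{F_{0,\delta_2}}$ and hence $e^{\mathbb{C}^\ast}(N^{\text{vir}}_{F_{\delta_1,\delta_2}})=p_2^\ast e^{\mathbb{C}^\ast}(N^{\text{vir}}_{F_{\delta_1,0}})\cdot p_1^\ast e^{\mathbb{C}^\ast}(N^{\text{vir}}_{F_{0,\delta_2}})$; these Euler classes, being pulled back from the two factors and invertible over the localised coefficients, commute with the Gysin pullback, so dividing the first identity by the second yields the claim. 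The main obstacle is making the gluing triangle precise: one has to arrange the Mayer--Vietoris argument on $\mathbb{P}^1$ so that, after pushing forward, the non-proper charts produce precisely the finite-rank virtual tangent complexes of $F_{\delta_1,0}$ and $F_{0,\delta_2}$, and to track $\mathbb{C}^\ast$-weights carefully enough to see that the overlap correction sits purely in the fixed part with no moving counterpart — this is the substance of \cite[Section~4]{KimCiocanWallCrossing}.
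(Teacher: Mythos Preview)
Your proposal is correct and follows essentially the same route as the paper: a Mayer--Vietoris argument on $\mathbb{P}^1$ for the restricted virtual tangent complex, identification of the overlap term with $\text{ev}^\ast T_{V/\!/G}$ carrying trivial $\mathbb{C}^\ast$-weight, and then separating fixed and moving parts to invoke Behrend--Fantechi for the virtual class identity and the Euler class factorisation for the normal bundles. The paper spells out the step you flag as the main obstacle---arranging that the non-proper charts yield, after taking $\mathbb{C}^\ast$-invariants, precisely the pulled-back virtual tangent complexes of $F_{\delta_1,0}$ and $F_{0,\delta_2}$---through two auxiliary lemmas (a second Mayer--Vietoris on each factor), which is exactly the content you defer to \cite[Section~4]{KimCiocanWallCrossing}.
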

	The only part of this result that doesn't appear explicitly in \cite{KimCiocanWallCrossing} is the statement $[F_{\delta_1, \delta_2}]^{\text{vir}} = [F_{\delta_1, 0}]^{\text{vir}}\times_{V/\!/G}[F_{0, \delta_2}]^{\text{vir}}$. 
	This follows by an analysis of the obstruction theory on the quasimap space, whose virtual tangent bundle is $R {\pi}_\ast$ of the complex of $\mathbb{C}^\ast$-equivariant bundles on $Q(V/\!/G,\delta)\times \mathbb{P}^1$ 
	\begin{align}\label{obsTheory}
		\left[\frac{\mathcal{P}_\delta \times \mathfrak{g}}{G}\rightarrow \frac{\mathcal{P}_\delta \times V}{G}\right],
	\end{align}
	where both quotients are taken with respect to the diagonal $G$-action induced by the left action on each factor.
	We will prove Proposition \ref{splitting_virtual_classes} with a sequence of lemmas.
	\begin{lem}
		Given the fixed locus $i : F_{\delta_1, \delta_2} \hookrightarrow Q(V/\!/G, \delta)$, consider the restriction of the morphism (\ref{obsTheory}) to $F_{\delta_1, \delta_2} \times \mathbb{P}^1$, and let $\mathcal{Q}$ be the cokernel. Then the virtual tangent bundle of the quasimap space satisfies
		\begin{align*}
			Li^\ast T^{\text{vir}} = R\pi_\ast \mathcal{Q},
		\end{align*}
		where $\pi: F_{\delta_1, \delta_2} \times \mathbb{P}^1 \rightarrow F_{\delta_1, \delta_2}$ is the projection on the first component.
	\end{lem}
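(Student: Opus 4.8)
The plan is to compute the derived pullback $Li^\ast$ of the two-term complex (\ref{obsTheory}) term by term, tracking the $\mathbb{C}^\ast$-equivariant structure, and then argue that the fixed (weight-zero) part of $R\pi_\ast$ of the restricted complex is exactly the virtual tangent bundle of $F_{\delta_1,\delta_2}$, while the moving part is the virtual normal bundle; here, however, we only need the bare statement that $Li^\ast T^{\mathrm{vir}} = R\pi_\ast \mathcal{Q}$, where $\mathcal{Q}$ is the cokernel of the restricted map. First I would note that since $F_{\delta_1,\delta_2}$ sits inside $Q(V/\!/G,\delta)$ as a \emph{closed} fixed substack, the embedding $i$ is a regular embedding (or at least has finite Tor-dimension against the relevant sheaves), so $Li^\ast$ of the complex (\ref{obsTheory}) is represented by the naive restriction of the two bundles $\mathcal{P}_\delta\times_G\mathfrak{g}$ and $\mathcal{P}_\delta\times_G V$ to $F_{\delta_1,\delta_2}\times\mathbb{P}^1$, together with the restricted map between them. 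Using the description of the universal quasimap on $F_{\delta_1,\delta_2}$ from Remark \ref{defineInjection} — glued from $p_1^\ast\mathcal{P}_{\delta_1}$ over $\mathbb{C}_0$ and $p_2^\ast\mathcal{P}_{\delta_2}$ over $\mathbb{C}_\infty$ — I would identify these restricted bundles explicitly.

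The key point is that the restricted map $\mathcal{P}_\delta\times_G\mathfrak{g}\to\mathcal{P}_\delta\times_G V$ over $F_{\delta_1,\delta_2}\times\mathbb{P}^1$ is \emph{injective} as a map of sheaves: this is exactly the statement that the quasimaps parameterised by $F_{\delta_1,\delta_2}$ are quasimaps (the infinitesimal $G$-action on $V$ is injective at semistable points because the $G$-action on $V^{\mathrm{ss}}$ is free, and this injectivity spreads out over the complement of the finitely many basepoints, while at the basepoints one checks injectivity of the relevant map of sheaves using that the cokernel of the $\mathfrak{g}$-action is the tangent sheaf to the quotient stack, which is a vector bundle along the map $u$). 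Granting injectivity, the complex $[\mathcal{P}_\delta\times_G\mathfrak{g}\to\mathcal{P}_\delta\times_G V]$ restricted to $F_{\delta_1,\delta_2}\times\mathbb{P}^1$ is quasi-isomorphic to its cokernel $\mathcal{Q}$ placed in the appropriate degree, so $Li^\ast T^{\mathrm{vir}} = Li^\ast R\pi_\ast[\,\cdots\,] = R\pi_\ast\big(Li^\ast[\,\cdots\,]\big) = R\pi_\ast\mathcal{Q}$, where the middle equality is base change for the flat projection $\pi$ (and commutes with $Li^\ast$ since $\pi$ is the pullback of the structure map along $i\times\mathrm{id}_{\mathbb{P}^1}$).

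The main obstacle I expect is the injectivity of the restricted map at the basepoints of the fixed quasimaps. Away from basepoints the quasimap lands in $V^{\mathrm{ss}}$ where freeness of the $G$-action gives injectivity of $\mathfrak{g}\to V$ directly, but a fixed quasimap $F_{\delta_1,\delta_2}$ has honest basepoints at $0$ and $\infty$ (that is where the degree is concentrated), and there the section $u$ may hit the unstable locus or even the origin of $V$. One must argue that, even so, the map of coherent sheaves $\mathcal{P}_\delta\times_G\mathfrak{g}\to\mathcal{P}_\delta\times_G V$ has no kernel as a sheaf map — equivalently, that its kernel, being a subsheaf of a locally free sheaf supported in codimension $\geq 1$, is zero. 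This follows because a section of the kernel would be a section of $u^\ast$ (infinitesimal automorphisms of the $G$-bundle fixing the map), and such automorphisms vanish generically by the semistable (free) locus argument, hence vanish identically since $\mathcal{P}_\delta\times_G\mathfrak{g}$ is torsion-free. I would state this as a short lemma or fold it into the argument, citing the analysis of the obstruction theory in \cite[Section 7.2]{KimCiocanMaulik} for the fact that the complex (\ref{obsTheory}) computes $T^{\mathrm{vir}}$ and that away from basepoints it reduces to $u^\ast T_{V/\!/G}$. Once injectivity is in hand, everything else is formal: flat base change along $\pi$, and the identification of the restricted two-term complex with $\mathcal{Q}[0]$ (or whatever degree convention the paper uses for the virtual tangent complex).
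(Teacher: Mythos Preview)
Your proposal is correct and follows essentially the same route as the paper: establish that the map (\ref{obsTheory}) is injective (generically by freeness of the $G$-action on the semistable locus, hence everywhere by torsion-freeness of the source), then apply flat base change for $\pi$ to commute $Li^\ast$ past $R\pi_\ast$. Two minor remarks: the discussion of $\mathbb{C}^\ast$-fixed and moving parts is not needed for this particular lemma (it comes into play only in the subsequent ones), and your justification that $Li^\ast$ of the complex is the naive restriction should simply be that both terms are locally free---no regularity or finite Tor-dimension hypothesis on $i$ is required.
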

	\begin{proof}	
		Since $G$ acts freely on the semistable locus $V(G)^\text{ss}$ and the universal map $u : \mathcal{P}\rightarrow V$ sends the generic fiber into this semistable locus, the morphism (\ref{obsTheory}) is injective and we denote its cokernel with $\overline{\mathcal{Q}}$. The virtual tangent bundle on the quasimap space is the complex $T^{\text{vir}} = R\pi_\ast \overline{\mathcal{Q}}$. When restricted to $F_{\delta_1, \delta_2}$, since the projection $\pi$ is flat and proper we find by basechange \cite[\href{https://stacks.math.columbia.edu/tag/08IB}{Tag 08IB}]{stacks-project} with respect to the diagram
		\[\begin{tikzcd}
			F_{\delta_1, \delta_2} \times \mathbb{P}^1 \arrow[rd, phantom, "\square"] \arrow[r, "i \times \mathbb{1}"] \arrow[d, "\pi"] & Q(V/\!/G,\delta) \times \mathbb{P}^1 \arrow[d, "\pi"]\\
			F_{\delta_1, \delta_2} \times \mathbb{P}^1 \arrow[r, "i"] & Q(V/\!/G,\delta)
		\end{tikzcd}\]
		that
		\begin{align*}
			Li^\ast T^{\text{vir}} = R\pi_\ast L(i\times \mathbb{1})^\ast \overline{\mathcal{Q}} = R\pi_\ast \left(\overline{\mathcal{Q}}_{\vert F_{\delta_1, \delta_2}\times \mathbb{P}^1}\right) = R\pi_\ast \mathcal{Q},
		\end{align*}
		where in the second equality we have used that the morphism (\ref{obsTheory}) remains injective after restriction to $F_{\delta_1 , \delta_2} \times \mathbb{P}^1$. 
	\end{proof}
	We now want to compute this complex by Mayer-Vietoris relative to the covering of $\mathbb{P}^1$ given by $\mathbb{C}_0$ and $\mathbb{C}_\infty$. Set $U_0 := F_{\delta_1, \delta_2}\times \mathbb{C}_0$, $U_\infty := F_{\delta_1, \delta_2}\times \mathbb{C}_\infty$ and $U_{0 \infty}:= U_0 \cap U_\infty$. We keep track of the notation using the commutative diagram
	\[\begin{tikzcd}
		& U_{0\infty} \arrow[ld, hook] \arrow[d, hook] \arrow[rd, hook]&\\
		U_0 \arrow[r, hook] \arrow[dr, swap, "\pi_0"] & F_{\delta_1, \delta_2}\times \mathbb{P}^1 \arrow[d, "\pi"]& U_\infty \arrow[l, hook] \arrow[ld, "\pi_\infty"]\\
		& F_{\delta_1, \delta_2}& 
	\end{tikzcd}\]
	and denote with $\pi_{0\infty} : U_{0\infty}\rightarrow F_{\delta_1, \delta_2}$ the composition from the top to the bottom.
	\begin{lem}\label{invariant_cech}
		The $\mathbb{C}^\ast$-fixed part of the restriction $R\pi_\ast \mathcal{Q}$ of the virtual tangent bundle of the quasimap space fits in the exact triangle
		\begin{align}
			(R\pi_\ast \mathcal{Q})^{\mathbb{C}^\ast} \rightarrow (R{\pi_0}_\ast \mathcal{Q}_{\vert U_{0}})^{\mathbb{C}^\ast} \oplus (R{\pi_\infty}_\ast \mathcal{Q}_{\vert U_{\infty}})^{\mathbb{C}^\ast} \rightarrow \text{ev}^\ast T_{V/\!/G} \xrightarrow{+1} \cdots
		\end{align}
		in $D_{\text{QCoh}}(F_{\delta_1, \delta_2})$.
	\end{lem}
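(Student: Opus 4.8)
The plan is to compute $R\pi_\ast \mathcal{Q}$ by Mayer–Vietoris with respect to the open cover $\{U_0, U_\infty\}$ of $F_{\delta_1,\delta_2}\times\mathbb{P}^1$, and then isolate the $\mathbb{C}^\ast$-fixed part. Recall that for a sheaf (or complex) $\mathcal{F}$ on $F_{\delta_1,\delta_2}\times\mathbb{P}^1$, the Čech/Mayer–Vietoris triangle reads
\begin{align*}
	R\pi_\ast\mathcal{F} \rightarrow R{\pi_0}_\ast(\mathcal{F}_{\vert U_0}) \oplus R{\pi_\infty}_\ast(\mathcal{F}_{\vert U_\infty}) \rightarrow R{\pi_{0\infty}}_\ast(\mathcal{F}_{\vert U_{0\infty}}) \xrightarrow{+1}\cdots,
\end{align*}
obtained from the short exact sequence of sheaves on $\mathbb{P}^1$ computing sections via the two-element cover. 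Applying this to $\mathcal{F} = \mathcal{Q}$, taking $\mathbb{C}^\ast$-invariants (which is exact since $\mathbb{C}^\ast$ is linearly reductive, so it commutes with cohomology and preserves the triangle), gives a triangle with the first two terms as in the statement; it remains to identify the third term $(R{\pi_{0\infty}}_\ast \mathcal{Q}_{\vert U_{0\infty}})^{\mathbb{C}^\ast}$ with $\text{ev}^\ast T_{V/\!/G}$.

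For that identification, first I would use the description of the embedding $i$ from Remark~\ref{defineInjection}: over $\mathbb{C}_0$ the universal quasimap on $F_{\delta_1,\delta_2}$ is $p_1^\ast\mathcal{P}_{\delta_1}$, over $\mathbb{C}_\infty$ it is $p_2^\ast\mathcal{P}_{\delta_2}$, and over the overlap $U_{0\infty}$ both are identified, via $u_1$ and $u_2$, with $\text{ev}^\ast V(G)^{\text{ss}}$. Hence on $U_{0\infty}$ the complex (\ref{obsTheory}) becomes the pullback (along $\pi_{0\infty}$, composed with $\text{ev}$) of $[\text{ev}^\ast V(G)^{\text{ss}}\times\mathfrak{g}\,/\,G \to \text{ev}^\ast V(G)^{\text{ss}}\times V\,/\,G]$, whose cokernel $\mathcal{Q}_{\vert U_{0\infty}}$ is precisely $\pi_{0\infty}^\ast\,\text{ev}^\ast T_{V/\!/G}$ — the tangent bundle of $V/\!/G$ pulled back, carrying the $\mathbb{C}^\ast$-action induced purely from the domain $\mathbb{P}^1$ (the target data is $\mathbb{C}^\ast$-trivial). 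Since $U_{0\infty}\cong F_{\delta_1,\delta_2}\times\mathbb{C}^\ast$ and the $\mathbb{C}^\ast$-action on the $\mathbb{C}^\ast$-factor is by scaling, $R{\pi_{0\infty}}_\ast$ of a pulled-back bundle $\mathcal{E}$ is $\bigoplus_{n\in\mathbb{Z}}\mathcal{E}\otimes z^{n}$ (one copy in each weight, placed in degree $0$), so its $\mathbb{C}^\ast$-fixed part is exactly $\mathcal{E}$ in weight zero. Applying this with $\mathcal{E} = \text{ev}^\ast T_{V/\!/G}$ yields the desired third term.

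The main obstacle — or rather the step requiring the most care — is the last bookkeeping point: making precise that the only $\mathbb{C}^\ast$-weight-zero contribution of $R{\pi_{0\infty}}_\ast$ is a single copy of $\text{ev}^\ast T_{V/\!/G}$ with no higher cohomology. This rests on $U_{0\infty}\to F_{\delta_1,\delta_2}$ being an affine morphism (so $R{\pi_{0\infty}}_\ast$ has no $R^{>0}$) together with the explicit weight decomposition $H^0(\mathbb{C}^\ast,\mathcal{O}) = \bigoplus_{n\in\mathbb{Z}}\mathbb{C}\cdot Y^n$ under the scaling action lifted from $\mathbb{P}^1$ (via $s\cdot(X,Y) = (X,s^{-1}Y)$ as fixed in Example~\ref{exampleToricqmaps2}), whose weight-zero part is one-dimensional. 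One must also check compatibility of the $\mathbb{C}^\ast$-equivariant structures: the two bundles $\mathcal{P}_{\delta_1\vert U_0}$ and $\mathcal{P}_{\delta_2\vert U_\infty}$ glue $\mathbb{C}^\ast$-equivariantly on the overlap, which is exactly the content of Remark~\ref{defineInjection} combined with the $\mathbb{C}^\ast$-invariance of the maps $u_i$, so the triangle above is genuinely a triangle of $\mathbb{C}^\ast$-equivariant complexes and taking invariants is legitimate. Once these points are in place the exact triangle in the statement follows immediately.
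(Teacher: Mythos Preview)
Your proof is correct and follows essentially the same approach as the paper: Mayer--Vietoris for the cover $\{U_0,U_\infty\}$, identification of $\mathcal{Q}_{\vert U_{0\infty}}$ with $\pi_{0\infty}^\ast\,\text{ev}^\ast T_{V/\!/G}$ via the fact that the universal map lands in $V(G)^{\text{ss}}$ over the overlap, and then taking $\mathbb{C}^\ast$-invariants. The only cosmetic difference is that the paper phrases the last step via the projection formula, writing the third term as $\text{ev}^\ast T_{V/\!/G}\otimes R{\pi_{0\infty}}_\ast\mathcal{O}_{U_{0\infty}}$ and then passing to invariants, whereas you compute the weight decomposition of $R{\pi_{0\infty}}_\ast$ directly; both arrive at the same identification.
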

	\begin{proof}
		By a standard Mayer-Vietoris computation \cite[\href{https://stacks.math.columbia.edu/tag/08JK}{Tag 08JK}]{stacks-project}, $R\pi_\ast \mathcal{Q}$ fits in the exact triangle
		\begin{align*}
			R\pi_\ast \mathcal{Q} \rightarrow R{\pi_0}_\ast \mathcal{Q}_{\vert U_{0}} \oplus R{\pi_\infty}_\ast \mathcal{Q}_{\vert U_{\infty}} \rightarrow R{\pi_{0\infty}}_\ast \mathcal{Q}_{\vert U_{0\infty}} \xrightarrow{+1} \cdots.
		\end{align*}
		Since on $V(G)^\text{ss}$ we have the short exact sequence 
		\begin{align*}
			0 \rightarrow \underline{\mathfrak{g}} \rightarrow T_V \rightarrow \pi^\ast T_{V/\!/G} \rightarrow 0,
		\end{align*} 
		there is a canonical isomorphism $\mathcal{Q}_{\vert U_{0\infty}} \simeq \pi_{0\infty}^\ast \text{ev}^\ast T_{V/\!/G}$, and we can rewrite the last term in the triangle by using the projection formula as
		\begin{align}\label{cech_complex}
			R\pi_\ast \mathcal{Q} \rightarrow R{\pi_0}_\ast \mathcal{Q}_{\vert U_{0}} \oplus R{\pi_\infty}_\ast \mathcal{Q}_{\vert U_{\infty}} \rightarrow \text{ev}^\ast T_{V/\!/G} \otimes R {\pi_{0\infty}}_\ast \mathcal{O}_{U_{0\infty}} \xrightarrow{+1} \cdots.
		\end{align}
		By taking $\mathbb{C}^\ast$-invariants we end up with the claimed exact triangle.
		\end{proof}
		\begin{lem}\label{derived_identifications}
			Let $\mathcal{Q}_0$ be the restriction of the cokernel of the morphism (\ref{obsTheory}) on $Q(V/\!/G,\delta_1) \times \mathbb{P}^1$ to $F_{\delta_1, 0}\times \mathbb{P}^1$ and, analogously, let $\mathcal{Q}_\infty$ be the restriction of the cokernel of the morphism (\ref{obsTheory}) on $Q(V/\!/G,\delta_2) \times \mathbb{P}^1$ to $F_{0,\delta_2}\times \mathbb{P}^1$. The following isomorphisms hold true in the derived category of $F_{\delta_1, \delta_2}$:
			\begin{align*}
				\left(R{\pi_0}_\ast \mathcal{Q}_{\vert U_0}\right)^{\mathbb{C}^\ast} &\simeq \left(R\pi_\ast L p_1^\ast \mathcal{Q}_0\right)^{\mathbb{C}^\ast}\\
				\left(R{\pi_\infty}_\ast \mathcal{Q}_{\vert U_\infty}\right)^{\mathbb{C}^\ast} &\simeq \left(R\pi_\ast L p_2^\ast \mathcal{Q}_\infty\right)^{\mathbb{C}^\ast}.
			\end{align*}
		\end{lem}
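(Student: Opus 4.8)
The plan is to prove Lemma \ref{derived_identifications} by carefully tracking how the cokernel sheaf $\mathcal{Q}$ on $F_{\delta_1,\delta_2}\times\mathbb{P}^1$ restricts to the two affine charts $U_0$ and $U_\infty$, and by exhibiting $\mathbb{C}^\ast$-equivariant identifications with pullbacks of $\mathcal{Q}_0$ and $\mathcal{Q}_\infty$. I will only treat the first isomorphism, the second being symmetric under swapping $0$ and $\infty$.

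\medskip

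\noindent\textbf{Step 1: Identify the universal data on $U_0$.} Recall from Remark \ref{defineInjection} that the universal quasimap $\mathcal{P}$ over $F_{\delta_1,\delta_2}\times\mathbb{P}^1$, defining the embedding $i$, is glued from $p_1^\ast\mathcal{P}_{\delta_1}$ over $F_{\delta_1,\delta_2}\times\mathbb{C}_0$ and $p_2^\ast\mathcal{P}_{\delta_2}$ over $F_{\delta_1,\delta_2}\times\mathbb{C}_\infty$. In particular, over $U_0=F_{\delta_1,\delta_2}\times\mathbb{C}_0$ the pair $(\mathcal{P},u)$ agrees with $(p_1^\ast\mathcal{P}_{\delta_1}, u_1)$; since $\mathcal{P}_{\delta_1}$ is the restriction to $F_{\delta_1,0}\times\mathbb{P}^1$ of the universal quasimap on $Q(V/\!/G,\delta_1)\times\mathbb{P}^1$, and $\mathcal{Q}_0$ is the cokernel of (\ref{obsTheory}) restricted to $F_{\delta_1,0}\times\mathbb{P}^1$, the functoriality of the construction (\ref{obsTheory}) in the universal quasimap gives a canonical isomorphism of $\mathbb{C}^\ast$-equivariant sheaves $\mathcal{Q}_{\vert U_0}\simeq (p_1\times\mathbb{1})^\ast\mathcal{Q}_0{}_{\vert F_{\delta_1,0}\times\mathbb{C}_0}$. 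Here I use that cokernels commute with the flat pullback $(p_1\times\mathbb{1})^\ast$ and that the embedding $i$ was defined precisely so that the universal quasimap pulls back this way.

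\medskip

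\noindent\textbf{Step 2: Reduce the pushforward over $\mathbb{C}_0$ to a pushforward over $\mathbb{P}^1$ after taking invariants.} The point is that $R{\pi_0}_\ast$ is a pushforward along the non-proper projection $F_{\delta_1,\delta_2}\times\mathbb{C}_0\to F_{\delta_1,\delta_2}$, whereas on the right-hand side we want $R\pi_\ast$ along the proper $F_{\delta_1,\delta_2}\times\mathbb{P}^1\to F_{\delta_1,\delta_2}$. The $\mathbb{C}^\ast$-action on $\mathbb{C}_0$ (with weight, say, such that $0$ is fixed and the weights on $H^0(\mathbb{C}_0,-)$ are bounded appropriately) makes the cohomology of a coherent sheaf on $\mathbb{C}_0$ into a direct sum over $\mathbb{C}^\ast$-weights, each summand being a finite-rank bundle; taking the weight-$0$ part picks out exactly the sections that extend over the missing point $\infty$, matching the weight-$0$ part of the cohomology over $\mathbb{P}^1$. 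Concretely, I would compare the two using the open-closed decomposition of $\mathbb{P}^1$ and the localisation/weight-decomposition argument already implicit in Lemma \ref{invariant_cech}: the complement $\{\infty\}\subset\mathbb{P}^1$ contributes only in nonzero $\mathbb{C}^\ast$-weights to $Rp_1^\ast\mathcal{Q}_0$ because near $\infty$ the glued quasimap is the other chart's data $(p_2^\ast\mathcal{P}_{\delta_2},u_2)$, on which $\mathcal{Q}$ is a pullback from $F_{\delta_1,\delta_2}$ twisted by the $\mathbb{C}^\ast$-character of the fiber of $\mathcal{O}(-1)$ at $\infty$, which is nontrivial. Hence $(R{\pi_0}_\ast\mathcal{Q}_{\vert U_0})^{\mathbb{C}^\ast}\simeq (R\pi_\ast Lp_1^\ast\mathcal{Q}_0)^{\mathbb{C}^\ast}$ after combining with Step 1.

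\medskip

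\noindent\textbf{Main obstacle.} The delicate point is Step 2: one must make precise the claim that passing to $\mathbb{C}^\ast$-invariants converts the pushforward along the open chart $\mathbb{C}_0$ into the pushforward along the compactification $\mathbb{P}^1$, i.e.\ that the ``extra'' contributions supported near $\infty$ (and the derived corrections coming from $Lp_1^\ast$ versus ordinary restriction) all live in nonzero weights. This requires knowing the $\mathbb{C}^\ast$-weights of $\mathcal{Q}$ near the fixed point $\infty$: there the map $u$ factors through the fixed point of $V/\!/G$, the bundle $\mathcal{P}$ is (up to the $G$-structure) built from $\mathcal{O}(-1)$ on which $\mathbb{C}^\ast$ acts with a definite nonzero weight, and so the fiber of $\mathcal{Q}\simeq\mathrm{ev}^\ast T_{V/\!/G}$-type terms near $\infty$ carries only nonzero $\mathbb{C}^\ast$-weights. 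Once this weight bookkeeping is in place — essentially the same analysis that yielded the term $\mathrm{ev}^\ast T_{V/\!/G}$ in Lemma \ref{invariant_cech} — the derived pullback $Lp_1^\ast$ may be replaced by $p_1^\ast$ on the relevant weight-$0$ piece because $p_1$ is flat, and the identification follows by flat base change along the square in (\ref{fixed_fiber_diagram}) combined with Step 1. I would end the proof by noting the symmetric argument for $U_\infty$, $\mathcal{Q}_\infty$, and $p_2$.
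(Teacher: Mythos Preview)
Your Step 1 is correct and matches the paper exactly: on $U_0$ the universal quasimap is $p_1^\ast\mathcal{P}_{\delta_1}$, so $\mathcal{Q}_{\vert U_0}\simeq (p_1^\ast\mathcal{Q}_0)_{\vert U_0}$ as $\mathbb{C}^\ast$-equivariant sheaves. Setting $Q_0:=p_1^\ast\mathcal{Q}_0=Lp_1^\ast\mathcal{Q}_0$ (the resolution (\ref{obsTheory}) stays injective under pullback, so there is no derived correction), this reduces the lemma to showing $(R\pi_{0\ast}\,Q_{0\,\vert U_0})^{\mathbb{C}^\ast}\simeq (R\pi_\ast Q_0)^{\mathbb{C}^\ast}$.

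Your Step 2, however, has a genuine error. To compare the pushforward over $U_0$ with the pushforward over all of $\mathbb{P}^1$ you must analyse $Q_0$ near $\infty$, \emph{not} $\mathcal{Q}$ near $\infty$. These are different sheaves: over $U_\infty$ the glued quasimap $\mathcal{P}$ is $p_2^\ast\mathcal{P}_{\delta_2}$, but $Q_0$ is the pullback of $\mathcal{Q}_0$, which lives on $F_{\delta_1,0}\times\mathbb{P}^1$. Since quasimaps in $F_{\delta_1,0}$ have their only basepoint at $0$, on $F_{\delta_1,0}\times\mathbb{C}_\infty$ the map $u_1$ lands in the semistable locus and $\mathcal{Q}_0$ is canonically $\pi^\ast\text{ev}^\ast T_{V/\!/G}$. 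Hence $Q_{0\,\vert U_\infty}\simeq \pi_\infty^\ast\text{ev}^\ast T_{V/\!/G}$, a pullback from the base with \emph{trivial} $\mathbb{C}^\ast$-action --- not nonzero weights as you claim.

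The paper's fix is clean: run Mayer--Vietoris for $R\pi_\ast Q_0$ with respect to $U_0\cup U_\infty$. After taking $\mathbb{C}^\ast$-invariants the $U_\infty$ term contributes $\text{ev}^\ast T_{V/\!/G}$ and the $U_{0\infty}$ term contributes the same $\text{ev}^\ast T_{V/\!/G}$, with the restriction map between them an isomorphism; these cancel in the triangle
\[
(R\pi_\ast Q_0)^{\mathbb{C}^\ast}\to (R\pi_{0\ast}Q_{0\,\vert U_0})^{\mathbb{C}^\ast}\oplus \text{ev}^\ast T_{V/\!/G}\to \text{ev}^\ast T_{V/\!/G}\xrightarrow{+1}\cdots,
\]
yielding the desired isomorphism. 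So the mechanism is not ``the extra piece has nonzero weight'' but ``the weight-zero extra pieces from $U_\infty$ and $U_{0\infty}$ are equal and cancel''.
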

		\begin{proof}
			We will prove the first equality as the second one is completely analogous.
			First of all notice that the locally free resolution (\ref{obsTheory}) of $\mathcal{Q}_0$ stays injective after pulling back by $p_1$, hence $L p_1^\ast \mathcal{Q}_0 = p_1^\ast \mathcal{Q}_0 =: Q_0$.
			By Remark \ref{defineInjection} we have the identifications
		\begin{align*}
			(Q_0)_{\vert U_0} \simeq \mathcal{Q}_{\vert U_0}  \qquad \text{and} \qquad (Q_0)_{\vert U_\infty} \simeq \pi_\infty^\ast \text{ev}^\ast T_{V/\!/G}.
		\end{align*}
		This implies that the Mayer-Vietoris for $R \pi_\ast Q_0$ fits it in the triangle
		\begin{align*}
			R\pi_\ast Q_0 &\rightarrow R{\pi_0}_\ast \mathcal{Q}_{\vert U_0} \oplus \left(\text{ev}^\ast T_{V/\!/G} \otimes R{\pi_\infty}_\ast \mathcal{O}_{U_{\infty}}\right)\rightarrow \\ 
			&\rightarrow \text{ev}^\ast T_{V/\!/G} \otimes R {\pi_{0\infty}}_\ast \mathcal{O}_{U_{0\infty}} \xrightarrow{+1} \cdots.
		\end{align*}
		Taking the fixed parts we find
		\begin{align*}
			\left(R\pi_\ast Q_0\right)^{\mathbb{C}^\ast} &\rightarrow \left(R{\pi_0}_\ast \mathcal{Q}_{\vert U_0}\right)^{\mathbb{C}^\ast} \oplus \text{ev}^\ast T_{V/\!/G} \rightarrow \text{ev}^\ast T_{V/\!/G} \xrightarrow{+1} \cdots,
		\end{align*}
		which shows that we have an isomorphism $\left(R\pi_\ast Q_0\right)^{\mathbb{C}^\ast} \simeq \left(R{\pi_0}_\ast \mathcal{Q}_{\vert U_0}\right)^{\mathbb{C}^\ast}$.
	\end{proof}
	\begin{proof}[Proof of Proposition \ref{splitting_virtual_classes}]
		Combining Lemma \ref{invariant_cech} and Lemma \ref{derived_identifications}, we find the exact triangle
		\begin{align}\label{final_exact_triangle}
			\left(R\pi_\ast \mathcal{Q}\right)^{\mathbb{C}^\ast} \rightarrow \left(R\pi_\ast Q_0\right)^{\mathbb{C}^\ast} \oplus \left(R\pi_\ast Q_\infty\right)^{\mathbb{C}^\ast} \rightarrow \text{ev}^\ast T_{V/\!/G} \xrightarrow{+1} \cdots.
		\end{align}
		We see that
		\begin{align*}
			R\pi_\ast Q_0 \simeq Lp_1^\ast R\pi_{0 \ast} \mathcal{Q}_0
		\end{align*}
		by basechange \cite[\href{https://stacks.math.columbia.edu/tag/08IB}{Tag 08IB}]{stacks-project} with respect to the diagram
		\[\begin{tikzcd}
			F_{\delta_1,\delta_2}\times \mathbb{P}_1 \arrow[rd, phantom, "\square"] \arrow[r, "p_1 \times \mathbb{1}"]\arrow[d, "\pi"] & F_{\delta_1,0} \times \mathbb{P}^1 \arrow[d, "\pi_0"]\\
			F_{\delta_1,\delta_2} \arrow[r, "p_1"] & F_{\delta_1,0}
		\end{tikzcd}\]
		since $\pi_0$ is flat and proper, and the same holds true for $Q_\infty$.
		This shows that (\ref{final_exact_triangle}) is precisely the exact triangle
		\begin{align*}
			T_{F_{\delta_1, \delta_2}}^\text{vir} \rightarrow T_{F_{\delta_1, 0}}^\text{vir} \boxplus T_{F_{0, \delta_2}}^\text{vir} \rightarrow \text{ev}^\ast T_{V/\!/G} \xrightarrow{+1} \cdots
		\end{align*}
		we need to obtain the claimed equality of virtual fundamental classes by combining Propositions 7.4 and 7.5 from \cite{Behrend}.
	\end{proof}
	\subsubsection{The Chern-Weil morphism on fixed loci}
	We have seen how the virtual classes of the fixed loci split with respect to (\ref{fixedLocusSplits}). What happens to the Chern-Weil morphism? 
	\begin{pro}\label{splittingOfCW}
		Let the equality $\delta = \delta_1 + \delta_2$ hold true in $\chi(G)^\vee$ and consider the fixed loci $F_{\delta_1, \delta_2}\subset Q(V/\!/G,\delta)$ and $F_{0, \delta_2}\subset Q(V/\!/G,\delta_2)$.
		They are related by the projection $p_2 : F_{\delta_1, \delta_2}\rightarrow F_{0, \delta_2}$ given by (\ref{fixedLocusSplits}). Then
		\begin{align*}
			\text{CW}^\delta(P)_{\vert F_{\delta_1, \delta_2}} = p_2^\ast \text{CW}^{\delta_2}(P)_{\vert F_{0, \delta_2}}
		\end{align*}
		for every $P \in A^\ast_G(\text{pt})$.
	\end{pro}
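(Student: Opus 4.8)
The plan is to unwind both sides of the asserted identity into the datum of a $G$-principal bundle carrying a residual $\mathbb{C}^\ast$-action, and then to compare those bundles by invoking Remark \ref{defineInjection}. Recall from (\ref{CWeq}) and Remark \ref{ChernWeilRemark} that $\text{CW}^\delta(P)$ is the pullback of $P$ along the classifying map of the right $G$-principal bundle $\mathcal{P}_{\delta\vert\infty}\to Q(V/\!/G,\delta)$ with its residual $\mathbb{C}^\ast$-action; equivalently it is the composite of $A^\ast_{G\times\mathbb{C}^\ast}(\text{pt})\to A^\ast_{G\times\mathbb{C}^\ast}(\mathcal{P}_{\delta\vert\infty})$ with the isomorphism $d$ of (\ref{CWeq}). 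Because $d$ is natural under $\mathbb{C}^\ast$-equivariant base change, for every $\mathbb{C}^\ast$-equivariant morphism $f\colon Y\to Q(V/\!/G,\delta)$ the class $f^\ast\text{CW}^\delta(P)$ is the pullback of $P$ along the classifying map of $f^\ast\mathcal{P}_{\delta\vert\infty}\to Y$, and so depends only on the $G\times\mathbb{C}^\ast$-equivariant isomorphism class of that bundle. Applying this to the embedding $i\colon F_{\delta_1,\delta_2}\hookrightarrow Q(V/\!/G,\delta)$ of Proposition \ref{SplitFixedLocus} and to the composition $F_{\delta_1,\delta_2}\xrightarrow{p_2}F_{0,\delta_2}\hookrightarrow Q(V/\!/G,\delta_2)$, the proposition reduces to the construction of a $G\times\mathbb{C}^\ast$-equivariant isomorphism of principal bundles over $F_{\delta_1,\delta_2}$,
\[
i^\ast\mathcal{P}_{\delta\vert\infty}\;\cong\;p_2^\ast\left(\left.\mathcal{P}_{\delta_2\vert\infty}\right|_{F_{0,\delta_2}}\right).
\]

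Such an isomorphism is exactly what Remark \ref{defineInjection} provides. There, $i^\ast\mathcal{P}_\delta$ is realized as a quasimap $\mathcal{P}\to V$ on $F_{\delta_1,\delta_2}\times\mathbb{P}^1$ whose restriction to $F_{\delta_1,\delta_2}\times\mathbb{C}_\infty$ is the pullback under $p_2$ of the universal quasimap over $F_{0,\delta_2}\times\mathbb{P}^1$, the latter being by construction the restriction to $F_{0,\delta_2}\times\mathbb{P}^1$ of the universal quasimap over $Q(V/\!/G,\delta_2)\times\mathbb{P}^1$. Since $\infty\in\mathbb{C}_\infty$, restricting to $F_{\delta_1,\delta_2}\times\{\infty\}$ gives
\[
i^\ast\mathcal{P}_{\delta\vert\infty}\;=\;\left.\mathcal{P}\right|_{F_{\delta_1,\delta_2}\times\{\infty\}}\;=\;p_2^\ast\left(\left.\mathcal{P}_{\delta_2\vert\infty}\right|_{F_{0,\delta_2}}\right),
\]
which is the required isomorphism of $G$-principal bundles.

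The one point that requires some care — and which I expect to be the main, if mild, obstacle — is that this comparison must be upgraded to a $\mathbb{C}^\ast$-equivariant statement: the invariants live in $\mathbb{C}^\ast$-equivariant cohomology, and the residual $\mathbb{C}^\ast$-actions on $\mathcal{P}_{\delta\vert\infty}$ and $\mathcal{P}_{\delta_2\vert\infty}$ are nontrivial along the fibers even though $\mathbb{C}^\ast$ acts trivially on $F_{\delta_1,\delta_2}$ and $F_{0,\delta_2}$ themselves. One therefore has to check that the embedding $i$ and the gluing underlying Remark \ref{defineInjection} respect the $\mathbb{C}^\ast$-structures. This follows from the fact that the $\mathbb{C}^\ast$-action on each $Q(V/\!/G,\delta')$ is induced functorially from the action on $\mathbb{P}^1$ fixing $0$ and $\infty$, so the induced $\mathbb{C}^\ast$-structure on the universal quasimap restricted to $\infty$ only sees the germ of the quasimap at $\infty$; for a point of $F_{\delta_1,\delta_2}$ this germ is precisely the one recorded by the corresponding point of $F_{0,\delta_2}$, since the degree-$\delta_1$ contribution is concentrated at $0$. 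Granting this refinement, the first two paragraphs combine to give $\text{CW}^\delta(P)_{\vert F_{\delta_1,\delta_2}}=p_2^\ast\,\text{CW}^{\delta_2}(P)_{\vert F_{0,\delta_2}}$ for every $P\in A^\ast_G(\text{pt})$, as claimed.
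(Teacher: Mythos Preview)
Your argument is correct and follows essentially the same route as the paper: both reduce the identity to the isomorphism $i^\ast\mathcal{P}_{\delta\vert\infty}\cong p_2^\ast(\mathcal{P}_{\delta_2\vert\infty}|_{F_{0,\delta_2}})$ supplied by Remark~\ref{defineInjection}. The paper's proof is a one-line citation of that remark, whereas you spell out the reduction via classifying maps and explicitly address the $\mathbb{C}^\ast$-equivariance of the comparison, a point the paper leaves implicit.
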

	\begin{proof}
		\sloppy Consider the universal families $\mathcal{P}_{\delta}$ on $Q(V/\!/G, \delta)\times \mathbb{P}^1$ and $\mathcal{P}_{\delta_2}$ on $Q(V/\!/G, \delta_2)\times \mathbb{P}^1$. The thesis follows by the isomorphism $\mathcal{P}_{\delta\vert F_{0,d_2}\times \mathbb{C}_\infty} \simeq p_2^\ast \mathcal{P}_{\delta_2 \vert F_{d_1,d_2}\times\mathbb{C}_\infty}$ described in Remark \ref{defineInjection}.
	\end{proof}
	We have the following immediate corollary, that will be crucial for computing our quasimap integrals:
	\begin{cor}\label{splitting_contributions}
		Consider the fixed locus $F_{\delta_1, \delta_2}$ in $Q(V/\!/G, \delta)$ and a class $P \in A^\ast_G(\text{pt})$. Then the pushforward to $V/\!/G$ through the evaluation map
		\begin{align*}
			\text{ev}_\ast\left(\text{CW}^\delta(P)_{\vert F_{\delta_1, \delta_2}} \cap \frac{[F_{\delta_1, \delta_2}]^\text{vir}}{e^{\mathbb{C}^\ast}(N^\text{vir}_{F_{\delta_1, \delta_2}})}\right)
		\end{align*}
		coincides with the intersection product in $V/\!/G$ of the pushforwards
		\begin{align*}
			\text{ev}_\ast\left(\frac{[F_{\delta_1, 0}]^\text{vir}}{e^{\mathbb{C}^\ast}(N^\text{vir}_{F_{\delta_1,0}})}\right) \cdot \text{ev}_\ast\left(\text{CW}^{\delta_2}(P)_{\vert F_{0, \delta_2}} \cap \frac{[F_{0, \delta_2}]^\text{vir}}{e^{\mathbb{C}^\ast}(N^\text{vir}_{F_{0, \delta_2}})}\right)
		\end{align*}
		through the evaluation morphisms of $F_{\delta_1,0}$ and $F_{0,\delta_2}$.
	\end{cor}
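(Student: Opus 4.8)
The plan is to deduce this purely formally from Propositions \ref{splitting_virtual_classes} and \ref{splittingOfCW}, using only the standard functoriality of refined Gysin pullbacks: compatibility with proper pushforward, with flat pullback, and with cap products by operational (Chow cohomology) classes. Everything takes place in $\mathbb{C}^\ast$-equivariant Chow homology localised by inverting the equivariant parameter $z$, so that the Euler classes $e^{\mathbb{C}^\ast}(N^\text{vir})$ in the denominators are invertible. I will also use that $V/\!/G$ is smooth (being the free quotient of the smooth open locus $V^\text{ss}$), so that its intersection product is realised as $\Delta^!$ composed with the exterior (cross) product of cycles, where $\Delta : V/\!/G \to V/\!/G \times V/\!/G$ is the diagonal.

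First I would fix notation for the diagram (\ref{fixed_fiber_diagram}): write $a : F_{\delta_1, 0} \to V/\!/G$ and $b : F_{0, \delta_2} \to V/\!/G$ for the two evaluation maps, $\text{pr} : F_{\delta_1, \delta_2} \to F_{0, \delta_2}$ for the projection of the fiber product, and view $F_{\delta_1, \delta_2}$ as the closed substack of $F_{\delta_1, 0} \times F_{0, \delta_2}$ pulled back along $\Delta$, with $\iota$ the resulting embedding and $q : F_{\delta_1, 0} \times F_{0, \delta_2} \to F_{0, \delta_2}$ the projection, so that $\text{pr} = q \circ \iota$ and $\text{ev} = b \circ \text{pr}$. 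Abbreviate $\alpha := [F_{\delta_1, 0}]^\text{vir}/e^{\mathbb{C}^\ast}(N^\text{vir}_{F_{\delta_1,0}})$, $\beta_0 := [F_{0, \delta_2}]^\text{vir}/e^{\mathbb{C}^\ast}(N^\text{vir}_{F_{0,\delta_2}})$ and $\beta := \text{CW}^{\delta_2}(P)_{\vert F_{0, \delta_2}} \cap \beta_0$. In this notation Proposition \ref{splitting_virtual_classes} reads $[F_{\delta_1, \delta_2}]^\text{vir}/e^{\mathbb{C}^\ast}(N^\text{vir}_{F_{\delta_1, \delta_2}}) = \Delta^!(\alpha \times \beta_0)$, and Proposition \ref{splittingOfCW} reads $\text{CW}^\delta(P)_{\vert F_{\delta_1, \delta_2}} = \text{pr}^\ast\, \text{CW}^{\delta_2}(P)_{\vert F_{0, \delta_2}} = \iota^\ast q^\ast\, \text{CW}^{\delta_2}(P)$.

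The core computation then runs as follows. The integrand on $F_{\delta_1, \delta_2}$ is $\iota^\ast q^\ast \text{CW}^{\delta_2}(P) \cap \Delta^!(\alpha \times \beta_0)$; since operational classes commute with refined Gysin maps, this equals $\Delta^!\big(q^\ast \text{CW}^{\delta_2}(P) \cap (\alpha \times \beta_0)\big)$, which by the elementary projection formula for exterior products (a class pulled back from the second factor can be absorbed into that factor) is $\Delta^!\big(\alpha \times (\text{CW}^{\delta_2}(P) \cap \beta_0)\big) = \Delta^!(\alpha \times \beta)$. Pushing forward along $\text{ev}$ and using that $\Delta^!$ commutes with proper pushforward across the cartesian square whose vertical maps are $\text{ev}$ and $a \times b$, I obtain $\Delta^!\big((a \times b)_\ast(\alpha \times \beta)\big) = \Delta^!\big(a_\ast \alpha \times b_\ast \beta\big)$, and by smoothness of $V/\!/G$ this is exactly the intersection product $\text{ev}_\ast(\alpha) \cdot \text{ev}_\ast(\beta)$ asserted in the statement.

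The main obstacle is not conceptual but bookkeeping: one must be sure the three functorialities invoked above are available in the relevant setting — the fixed loci $F_{\delta_1,\delta_2}$ may be Deligne–Mumford stacks, so I would phrase everything in Kresch's Chow theory (equivalently, Edidin–Graham equivariant Chow groups) and check that refined Gysin pullback along the diagonal of the smooth $V/\!/G$, proper pushforward, and the operational-class compatibility behave as in the scheme case; one must also confirm these survive localisation at $z$ and the passage to $\mathbb{C}^\ast$-equivariance, which is routine since every morphism in sight is $\mathbb{C}^\ast$-equivariant. Granting those, the corollary is immediate from the two propositions.
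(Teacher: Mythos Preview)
Your proposal is correct and follows essentially the same route as the paper: both arguments pass through the Cartesian square over the diagonal $\Delta : V/\!/G \to V/\!/G \times V/\!/G$, rewrite the integrand as $\Delta^!(\alpha \times \beta)$ using Propositions \ref{splitting_virtual_classes} and \ref{splittingOfCW} together with the compatibility of refined Gysin maps with operational classes (the paper cites \cite[Property (C3), Section 17.1]{FultonInt}), and then push forward using the compatibility of $\Delta^!$ with proper pushforward (the paper cites \cite[Theorem 6.2 (a)]{FultonInt}). Your extra paragraph on the DM-stack and equivariant bookkeeping is a reasonable caveat but does not change the argument.
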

	\begin{proof}
		Consider the fibered diagram
		\[\begin{tikzcd}
			F_{\delta_1, \delta_2}\arrow[rd, phantom, "\square"] \arrow[d, "\text{ev}"] \arrow[r, "p_1 \times p_2"] & F_{\delta_1, 0} \times F_{0, \delta_2} \arrow[d, "\text{ev}\times \text{ev}"]\\
			V/\!/G \arrow[r, "\Delta"] & V/\!/G \times V/\!/G
		\end{tikzcd}\]
		and recall that Proposition \ref{splitting_virtual_classes} ensures that the localised virtual fundamental class of $F_{\delta_1, \delta_2}$ is the refined Gysin pullback $\Delta^!$ of the localised virtual classes of $F_{\delta_1, 0}$ and  $F_{0, \delta_2}$. By Proposition \ref{splittingOfCW} and compatibility of cohomology classes with Gysin morphisms (\cite[Property (C3), Section 17.1]{FultonInt}) we can write 
		\begin{align*}
			&\text{CW}^\delta(P)_{\vert F_{\delta_1, \delta_2}} \cap \frac{[F_{\delta_1, \delta_2}]^\text{vir}}{e^{\mathbb{C}^\ast}(N^\text{vir}_{F_{\delta_1, \delta_2}})}\\ = &\Delta^! \left(\frac{[F_{\delta_1, 0}]^\text{vir}}{e^{\mathbb{C}^\ast}(N^\text{vir}_{F_{\delta_1,0}})}, \text{CW}^{\delta_2}(P)_{\vert F_{0, \delta_2}} \cap \frac{[F_{0, \delta_2}]^\text{vir}}{e^{\mathbb{C}^\ast}(N^\text{vir}_{F_{0, \delta_2}})}\right).
		\end{align*}
		By \cite[Theorem 6.2 (a)]{FultonInt}, for all classes $\alpha \in A_\ast(F_{\delta_1, 0})$ and $\beta \in A_\ast(F_{0,\delta_2})$, we have 
		\begin{align*}
			\text{ev}_\ast \Delta^!(\alpha, \beta) = \Delta^\ast\left(\text{ev}_\ast \alpha, \text{ev}_\ast \beta\right),
		\end{align*}
		and the thesis follows from applying this equality to the right-hand side of the previous equation.
	\end{proof}
	\subsubsection{The I-function}
	Here we recall the classical definition of the quasimap small $I$-function
	\begin{definition}
		For every $\delta \in \chi(G)^\vee$, the \textit{$\delta$-term of the $I$-function of $V/\!/G$} is the element $I_\delta^{V/\!/G} \in A_\ast(V/\!/G)[z]_z$ given by
		\begin{align*}
			I_\delta^{V/\!/G}(z):= \text{ev}_\ast \left(\frac{[F_{\delta, 0}]^\text{vir}}{e^{\mathbb{C}^\ast}(N^{\text{vir}}_{F_{\delta, 0}})}\right).
		\end{align*}
		The \textit{quasimap small $I$-function} of $V/\!/G$ is the formal sum
		\begin{align}\label{Ifuction}
			I^{V/\!/G}:= \sum_{\delta \in \chi(G)^\vee} q^\delta I^{V/\!/G}_\delta.
		\end{align}
		Since this is the only I-function we will consider in this paper, we will often refer to it as \textit{the I-function}.
	\end{definition}
	Notice that the quasimap $I$-function only encodes data coming from the $(\delta, 0)$-part of the fixed locus. The part indexed by $(0,\delta)$ is also related to the $I$-function in a simple way:
	\begin{align*}
		I_\delta^{V/\!/G}(-z) = \text{ev}_\ast \left(\frac{[F_{0,\delta}]^\text{vir}}{e^{\mathbb{C}^\ast}(N^{\text{vir}}_{F_{0,\delta}})}\right).
	\end{align*}
	This is easy to see by using the automorphism of $\mathbb{P}^1$ exchanging the coordinates to produce an automorphism of $Q(V/\!/G, \delta)$ which exchanges the action of $t$ with the action of $t^{-1}$. 
	It is a well known fact that classes coming from the remaining fixed loci can still be expressed in terms of the $I$-function. We can recover this by setting $P=1$ in Corollary \ref{splitting_contributions}:
	\begin{pro}\label{EvaluatingVirtualCycle}
		Let $\delta_1 + \delta_2 = \delta$. Then
		\begin{align*}
			\text{ev}_\ast \left( \frac{[F_{\delta_1, \delta_2}]^{\text{vir}}}{e^{\mathbb{C}^\ast}(N^{\text{vir}}_{F_{\delta_1, \delta_2}})} \right) = I^{V/\!/G}_{\delta_1}(z) I^{V/\!/G}_{\delta_2}(-z).
		\end{align*}
	\end{pro}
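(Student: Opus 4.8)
The plan is to obtain this as the special case $P = 1$ of Corollary \ref{splitting_contributions}. The first observation is that $\text{CW}^{\delta}$ is a ring homomorphism, so $\text{CW}^{\delta}(1) = 1$ in $A^\ast_{\mathbb{C}^\ast}(Q(V/\!/G,\delta))$, and similarly $\text{CW}^{\delta_2}(1) = 1$ after restriction to $F_{0,\delta_2}$. Hence the left-hand side of Corollary \ref{splitting_contributions}, evaluated at $P = 1$, is precisely $\text{ev}_\ast\!\left(\frac{[F_{\delta_1,\delta_2}]^{\text{vir}}}{e^{\mathbb{C}^\ast}(N^{\text{vir}}_{F_{\delta_1,\delta_2}})}\right)$, which is the quantity we want to compute.

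Next I would identify the two factors produced on the right-hand side with terms of the $I$-function. The first factor is $\text{ev}_\ast\!\left(\frac{[F_{\delta_1,0}]^{\text{vir}}}{e^{\mathbb{C}^\ast}(N^{\text{vir}}_{F_{\delta_1,0}})}\right)$, which is $I^{V/\!/G}_{\delta_1}(z)$ by the very definition of the $I$-function. For the second factor, again using $\text{CW}^{\delta_2}(1) = 1$, it reduces to $\text{ev}_\ast\!\left(\frac{[F_{0,\delta_2}]^{\text{vir}}}{e^{\mathbb{C}^\ast}(N^{\text{vir}}_{F_{0,\delta_2}})}\right)$, and this equals $I^{V/\!/G}_{\delta_2}(-z)$ by the sign-flip identity recorded just before the statement. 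Multiplying the two identifications together, using the intersection product in $V/\!/G$ appearing in Corollary \ref{splitting_contributions}, yields the claimed formula.

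I do not expect any genuine obstacle here, since the substantive content is already packaged in Proposition \ref{splitting_virtual_classes}, Proposition \ref{splittingOfCW} and Corollary \ref{splitting_contributions}; the only point deserving a sentence of justification is the equality $I^{V/\!/G}_{\delta}(-z) = \text{ev}_\ast\!\left(\frac{[F_{0,\delta}]^{\text{vir}}}{e^{\mathbb{C}^\ast}(N^{\text{vir}}_{F_{0,\delta}})}\right)$. This follows because the automorphism $[x_0:x_1]\mapsto [x_1:x_0]$ of $\mathbb{P}^1$ induces an automorphism of $Q(V/\!/G,\delta)$ that carries $F_{\delta,0}$ to $F_{0,\delta}$ and conjugates the $\mathbb{C}^\ast$-action of $t$ to that of $t^{-1}$, hence acts on the equivariant parameter by $z \mapsto -z$, while being compatible with the evaluation maps to $V/\!/G$.
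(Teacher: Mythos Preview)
Your proposal is correct and follows exactly the paper's approach: the paper states this proposition as an immediate consequence of setting $P=1$ in Corollary \ref{splitting_contributions}, combined with the definition of the $I$-function and the sign-flip identity for $F_{0,\delta}$ obtained from the coordinate-swap automorphism of $\mathbb{P}^1$. Your write-up is simply a more detailed spelling-out of that one-line deduction.
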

	\subsection{The Chern-Weil homomorphism in the toric case}
	It turns out that, in the toric case, the restriction of the Chern-Weil homomorphism to the fixed loci $F_{\tilde{\delta}_1, \tilde{\delta}_2}$ is related to the pullback through the evaluation maps. We start by explicitly describing the fixed loci for quasimaps with toric targets.
	\begin{ex}\label{exampleToricqmaps3}
		Given a cocharacter $a \in \chi(T)^\vee$ and a point $p \in \lbrace 0, \infty \rbrace \subset \mathbb{P}^1$ we denote with
		\begin{align*}
			L(a)_p \subset \oplus_{j=1}^m H^0\left(\mathbb{P}^1, \mathcal{O}(\langle a, \rho_j\rangle)\right)
		\end{align*}
		the subscheme formed by $\mathbb{C}^\ast$-fixed quasimaps that meet the unstable locus $(\mathbb{C}^m)^{\text{unst}}$ only at the point $p$. Notice that
		\begin{align*}
			L(a)_0 \simeq L(a)_\infty
		\end{align*}
		by exchanging the role of the variables $X,Y$ on $\mathcal{O}(-1)$. We now show that an element of $L(a)_0$ is necessarily of the form
		\begin{align*}
			\varphi(X,Y) = (k_1 X^{\langle a,\rho_1\rangle}, \dots, k_m X^{\langle a,\rho_m\rangle}).
		\end{align*}
		for some $k_1, ..., k_m \in \mathbb{C}$.
		Since $\varphi$ is a fixed quasimap with only poles at $0$, for every $X \neq 0$ and $Y\neq 0$ the image $\varphi(X,Y)$ belongs to the $T$-orbit of $\varphi(1,0)$. In particular, fixed $j \in \lbrace 1, \dots, m\rbrace$, $\varphi_{j}(X,Y)= 0$ if and only if $\varphi_{j}(1,0)= 0$. In case the $j^{th}$ component is nonzero, then $\varphi_{j}(X,Y)$ is a homogeneous polynomial of degree $\langle a,\rho_{j}\rangle$ that vanishes only at $0 \in \mathbb{P}^1$, hence there is a nonzero number $k_{j} \in \mathbb{C}^\ast$ so that $\varphi_{j}(X,Y) = k_{j} X^{\langle a, \rho_j\rangle}$, proving the claim.
		
		This shows that a quasimap belonging to $F_{a,b}$, where $a+b=c$, is a section of $\oplus_{j=1}^m \mathcal{O}(\langle c, \rho_j\rangle)$ of the form
		\begin{align*}
			\varphi_j(X,Y) = k_j X^{\langle a, \rho_j\rangle} Y^{\langle b, \rho_j\rangle}
		\end{align*}
		for some $k_1, ..., k_m \in \mathbb{C}$. 
		Now that we understand the shape of the quasimaps paramterised by $F_{a,b}$ we would like to lift the evaluation map 
		\begin{align*}
			\text{ev} : F_{a,b} \rightarrow \mathbb{C}^m/\!/T 
		\end{align*}
		to a $\mathbb{C}^\ast$-equivariant morphism $L(c)^{\text{ss}}_{\vert F_{a,b}} \rightarrow (\mathbb{C}^m)^\text{ss}$.
		If we restrict to the quasimaps lying above $F_{a,b}$, the $\mathbb{C}^\ast$-action (\ref{actionOnLc}) on $L(c)^{\text{ss}}$ restricts to $L(c)^{\text{ss}}_{\vert F_{a,b}}$ as
		\begin{align*}
			(s \cdot \varphi)_i = s^{\langle b,\rho_i\rangle} \varphi_i \qquad \forall i = 1, \dots, m.
		\end{align*}
		Consider the evaluation map $u_{(1,1)} : L(c)^{\text{ss}} \rightarrow \mathbb{C}^m$ at the point $(1,1) \in \mathcal{O}(-1)$. This lifts $\text{ev}$ and it becomes $\mathbb{C}^\ast$-equivariant once we define the $\mathbb{C}^\ast$-action on $\mathbb{C}^m$ as 
		\begin{align*}
			(s \cdot x)_j := s^{\langle b, \rho_j\rangle}x_j.
		\end{align*}
		With this equivariant structure on $\mathbb{C}^m$ we have a commutative, $\mathbb{C}^\ast$-equivariant diagram
		\begin{equation}\label{equivariantDiagram}
			\begin{tikzcd}
				L(c)^{\text{ss}}_{\vert F_{a,b}} \arrow[d, "\pi"] \arrow[rr,"u_{(1,1)}"] \arrow[rrd] \arrow[rdd]& & \mathbb{C}^m\\
				F_{a,b} \arrow[rd, swap, "\text{ev}"]& & (\mathbb{C}^m)^{\text{ss}}\arrow[u, hook, "i"] \arrow[ld, "q"]\\
				& \mathbb{C}^m/\!/T &
			\end{tikzcd}
		\end{equation}
		and notice that the residual $\mathbb{C}^\ast$-action on the quotient $\mathbb{C}^m/\!/T$ is trivial.
	\end{ex}
	Thanks to this simple description in the toric case we can prove the following relation between the Chern-Weil homomorphism and the evaluation map:
	\begin{lem}\label{ChernWeilEvaluation}
		Let $\tilde{\delta}_1, \tilde{\delta}_2\in \chi(T)^\vee$, set $\tilde{\delta} := \tilde{\delta}_1 + \tilde{\delta}_2$ and consider the quasimap space $Q(V/\!/T, \tilde{\delta})$. For every $\alpha \in \chi(T)$, consider the line bundle $\mathcal{L}_a$ on $V/\!/T$ induced from the equivariant bundle on $V$ defined by the representation $\mathbb{C}_\alpha$ (namely the fiber is $\mathbb{C}$ with $T$-action defined by $\alpha$). We have
		\begin{align*}
			\text{CW}^{\tilde{\delta}}\left(\alpha + \langle \tilde{\delta}_2, \alpha \rangle z\right)_{\vert F_{\tilde{\delta}_1 , \tilde{\delta}_2}} = \text{ev}^\ast c_1(\mathcal{L}_\alpha)
		\end{align*}
		in $A_{\mathbb{C}^\ast}^\ast(F_{\tilde{\delta}_1 , \tilde{\delta}_2})_z \simeq A^\ast(F_{\tilde{\delta}_1 , \tilde{\delta}_2})[z]_z$.
	\end{lem}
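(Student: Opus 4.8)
The plan is to reduce the claim to an explicit computation on the toric model from Example~\ref{exampleToricqmaps3}, where both sides of the asserted identity can be written down concretely as $\mathbb{C}^\ast$-equivariant Chow classes. First I would use the diagram (\ref{equivariantDiagram}): the fixed locus $F_{\tilde\delta_1,\tilde\delta_2}$ sits inside $Q(V/\!/T,\tilde\delta)=L(\tilde\delta)/\!/T$, and the universal quasimap restricted to $L(\tilde\delta)^{\text{ss}}_{\vert F_{\tilde\delta_1,\tilde\delta_2}}$ at the point $\infty$ fits into the commutative $\mathbb{C}^\ast$-equivariant square relating $\pi$, $\text{ev}$, and the evaluation $u_{(1,1)}$ to $\mathbb{C}^m$ carrying the $\mathbb{C}^\ast$-weight $\langle\tilde\delta_2,\rho_j\rangle$ on the $j$-th coordinate. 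Recall from Remark~\ref{ChernWeilRemark} that in the torus case $\text{CW}^{\tilde\delta}$ is literally the Chern--Weil (classifying-map) homomorphism of the right principal $T$-bundle $(\mathcal{P}_{\tilde\delta})_{\vert\infty}$, so $\text{CW}^{\tilde\delta}(\alpha)$ is the first Chern class of the line bundle associated to $\alpha$ via that right $T$-action on the principal bundle over $Q(V/\!/T,\tilde\delta)$.

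The key step is then to identify, as $\mathbb{C}^\ast$-equivariant line bundles on $F_{\tilde\delta_1,\tilde\delta_2}$, the bundle $\text{CW}^{\tilde\delta}$ assigns to the character $\alpha\in\chi(T)$ with $\text{ev}^\ast\mathcal{L}_\alpha$ twisted by a power of the equivariant parameter. Concretely, writing $\alpha=\sum_j n_j\rho_j$ in terms of the weights of $\mathbb{C}^m$ (or, more robustly, using linearity of both $\text{CW}^{\tilde\delta}$ and $\alpha\mapsto\mathcal L_\alpha$ to reduce to $\alpha=\rho_j$), the associated bundle to $\rho_j$ restricted to $F_{\tilde\delta_1,\tilde\delta_2}$ is the pullback via $u_{(1,1)}$ of the $j$-th coordinate line of $\mathbb{C}^m$, which by (\ref{equivariantDiagram}) is $\text{ev}^\ast$ of the tautological line $\mathcal L_{\rho_j}$ on $V/\!/T$ but carrying the $\mathbb{C}^\ast$-weight $\langle\tilde\delta_2,\rho_j\rangle$ coming from the action $(s\cdot x)_j=s^{\langle\tilde\delta_2,\rho_j\rangle}x_j$. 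Taking equivariant first Chern classes gives $\text{CW}^{\tilde\delta}(\rho_j)_{\vert F_{\tilde\delta_1,\tilde\delta_2}}=\text{ev}^\ast c_1(\mathcal L_{\rho_j})+\langle\tilde\delta_2,\rho_j\rangle z$, and summing (by linearity in $\alpha$, since $\langle\tilde\delta_2,\cdot\rangle$ is linear) yields $\text{CW}^{\tilde\delta}(\alpha)_{\vert F}=\text{ev}^\ast c_1(\mathcal L_\alpha)+\langle\tilde\delta_2,\alpha\rangle z$, which is exactly the statement after moving the $\langle\tilde\delta_2,\alpha\rangle z$ term to the other side.

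I expect the main obstacle to be bookkeeping of the $\mathbb{C}^\ast$-equivariant structures: one must be careful that the equivariant structure on $(\mathcal P_{\tilde\delta})_{\vert\infty}$ used to define $\text{CW}^{\tilde\delta}$ (via $p\cdot_{\text{right}}g=g^{-1}\cdot_{\text{left}}p$ and the $p=\infty$ restriction of the lifted $\mathbb{C}^\ast$-action) matches the one appearing in (\ref{equivariantDiagram}), including the sign/inverse conventions in the lift $s\cdot(X,Y)=(X,s^{-1}Y)$ to $\mathcal O(-1)$ and the induced weights on $H^0(\mathbb P^1,\mathcal O(\langle\tilde\delta,\rho_j\rangle))$. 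Once the conventions are pinned down, the computation is the direct one sketched above; the only subtlety beyond that is handling characters $\alpha$ not in the span of the $\rho_j$, which is dealt with by the observation that both $\text{CW}^{\tilde\delta}$ and $\alpha\mapsto(\text{ev}^\ast c_1(\mathcal L_\alpha),\langle\tilde\delta_2,\alpha\rangle)$ are additive in $\alpha$, so it suffices to verify the identity on a generating set of $\chi(T)$, and a general $\alpha$ can be reduced to a combination of the $\rho_j$ together with characters acting trivially on $V$, for which $\mathcal L_\alpha$ is (equivariantly) a trivial-on-$V$ line whose Chern--Weil class and $\langle\tilde\delta_2,\alpha\rangle z$ twist are computed directly from the definition.
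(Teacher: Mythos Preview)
Your approach is correct and follows essentially the same route as the paper: both use the commutative $\mathbb{C}^\ast$-equivariant diagram (\ref{equivariantDiagram}) from Example~\ref{exampleToricqmaps3}, with $\mathbb{C}^\ast$ acting on $V$ via the cocharacter $\tilde\delta_2$, and compare the equivariant line bundle coming from the Chern--Weil construction with $\text{ev}^\ast\mathcal{L}_\alpha$.

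The only difference worth noting is that your reduction to $\alpha=\rho_j$ and the subsequent linearity argument are unnecessary detours. The paper works directly with an arbitrary $\alpha\in\chi(T)$: since $\mathbb{C}^\ast$ acts on $V$ through the cocharacter $\tilde\delta_2$, the induced $\mathbb{C}^\ast$-equivariant structure on the descended bundle $\mathcal{L}_\alpha$ over $V/\!/T$ has weight $\langle\tilde\delta_2,\alpha\rangle$ for \emph{any} $\alpha$, giving $q^\ast c_1(\mathcal{L}_\alpha)=i^\ast(\alpha+\langle\tilde\delta_2,\alpha\rangle z)$ in $A^\ast_{T\times\mathbb{C}^\ast}(V^{\text{ss}})$ by inspection. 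A single diagram chase then yields $\pi^\ast\text{ev}^\ast c_1(\mathcal{L}_\alpha)=u^\ast(\alpha+\langle\tilde\delta_2,\alpha\rangle z)$, which is the Chern--Weil identity by definition. This sidesteps entirely the issue you flag about characters not lying in the span of the $\rho_j$; there is no need to generate $\chi(T)$ from the weights of $V$.
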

	\begin{proof}
		Recall the explicit description of quasimap moduli spaces to toric targets that we have discussed in Examples \ref{exampleToricqmaps1},  \ref{exampleToricqmaps2} and \ref{exampleToricqmaps3}.
		If we let $\mathbb{C}^\ast$ act on $V$ via the cocharacter $\tilde{\delta}_2$, the diagram (\ref{equivariantDiagram}) becomes a commutative $\mathbb{C}^\ast$-equivariant-diagram of the form
		\begin{equation*}
			\begin{tikzcd}
				\mathcal{P}_{\delta \vert F_{\tilde{\delta}_1, \tilde{\delta}_2}} \arrow[d, "\pi"] \arrow[rr,"u"] \arrow[rrd, "x"] \arrow[rdd, "y"]& & V\\
				F_{\tilde{\delta}_1, \tilde{\delta}_2} \arrow[rd, swap, "\text{ev}"]& & V^{\text{ss}}\arrow[u, hook, "i"] \arrow[ld, "q"]\\
				& V/\!/T &
			\end{tikzcd}
		\end{equation*}
		Given $\alpha \in \chi(T)$ we can consider the class $c_1(\mathcal{L}_\alpha) \in A^\ast(V/\!/T) \subset A^\ast_{\mathbb{C}^\ast}(V/\!/T)$. It is immediate to check that $q^\ast c_1(\mathcal{L}_\alpha) = i^\ast(\alpha + \langle\tilde{\delta}_2, \alpha \rangle z)$ by directly looking at the induced $\mathbb{C}^\ast$-equivariant structure on the descended bundle $\mathcal{L}_\alpha$. Then by functoriality
		\begin{align*}
			\pi^\ast \text{ev}^\ast c_1(\mathcal{L}_\alpha) &= y^\ast c_1(\mathcal{L}_\alpha) = x^\ast q^\ast c_1(\mathcal{L}_\alpha) \\
			&= x^\ast i^\ast \left(\alpha + \langle \tilde{\delta}_2, \alpha \rangle z\right)= u^\ast \left(\alpha + \langle \tilde{\delta}_2, \alpha \rangle z\right).
		\end{align*}
		By definition of the Chern-Weil homomorphism, this concludes the proof.
	\end{proof}
	\section{Abelianisation}\label{SectionAbelianisation}
	The goal of this section is to prove the abelianisation formula of Theorem \ref{MainTheoremAbFor}, relating quasimap invariants of $V/\!/G$ to quasimap invariants of $V/\!/T$. Throughout this section we will adopt the following notation, which will be handy in some computations:
	\begin{itemize}
		\item consider a ring $R$ and the corresponding ring $R[z]_z$ of Laurent polynomials. Given $P \in R[z]_z$, we will denote with $\overline{P} \in R[z]_z$ the Laurent polynomial defined as $\overline{P}(z):= P(-z)$.
	\end{itemize}
	\subsection{The key proposition}
	Fixed $\delta \in \chi(G)^\vee$ and a splitting $\delta = \delta_1 + \delta_2$ we can consider the fixed locus $F_{\delta_1, \delta_2}$ in the quasimap space $Q(V/\!/G, \delta)$. Analogously, if we pick $\tilde{\delta}_1, \tilde{\delta}_2 \in \chi(T)^\vee$, whose restrictions on $\chi(G)$ are $\delta_1, \delta_2$ respectively (in short-hand notation we denote this with $\tilde{\delta}_i \mapsto \delta_i$), we can consider the fixed locus $F_{\tilde{\delta}_1, \tilde{\delta}_2}$ in $Q(V/\!/T, \tilde{\delta})$ for $\tilde{\delta} := \tilde{\delta}_1+\tilde{\delta}_2$.
	Consider the diagram relating the quotients of $V$ by $T$ and $G$:
	\begin{equation}\label{abelianisationDiagram}
		\begin{tikzcd}
			& & F_{\tilde{\delta}_1, \tilde{\delta}_2} \arrow[d, "\text{ev}"]\\
			& V(G)^{\text{ss}}/\!/T \arrow[r, hook, "j"] \arrow[d, "g"]& V/\!/T\\
			F_{\delta_1, \delta_2} \arrow[r, "\text{ev}"] & V/\!/G &
		\end{tikzcd}.
	\end{equation}
	Here $j$ is an open embedding (since the $G$-semistable locus is open in the $T$-semistable one) and $g$ is a smooth morphism with fiber $G/T$ (since the actions are free on the semistable loci). 
	
	The key proposition for proving the abelianisation formula relates the pushforward of the virtual class from $F_{\delta_1, \delta_2}$ to the pushforward of the virtual classes from $F_{\tilde{\delta}_1, \tilde{\delta}_2}$, for all possible lifts $\tilde{\delta}_i \mapsto \delta_i$, when both classes are pulled back onto $V(G)^\text{ss}/\!/T$ via the maps $j$ and $g$.
	
	The discrepancy between these two classes will be given in terms of the following correction terms: given $\tilde{\delta} \in \chi(T)^\vee$ and $w \in \chi(T)$, we define the class $C(\tilde{\delta}, w) \in A^\ast(V/\!/T)[z]_z$ as
	\begin{align*}
		C(\tilde{\delta}, w):&= \frac{\prod_{k=-\infty}^{\langle \tilde{\delta}, w \rangle}(c_1(\mathcal{L}_w)+kz)}{\prod_{k=-\infty}^{0}(c_1(\mathcal{L}_w)+kz)}\\
        &= \begin{cases}
			\prod_{k=\langle \tilde{\delta}, w \rangle+1}^0 (c_1(\mathcal{L}_w)+kz)^{-1} & \text{if } \langle \tilde{\delta}, w \rangle < 0\\
			1 & \text{if } \langle \tilde{\delta}, w \rangle=0\\
			\prod_{k=1}^{\langle \tilde{\delta}, w \rangle} (c_1(\mathcal{L}_w)+kz) & \text{if } \langle \tilde{\delta}, w \rangle > 0
		\end{cases}.
	\end{align*}
	To be precise, we should stress that $C(\tilde{\delta}, w)$ is only well defined when $\langle\tilde{\delta}, w\rangle \geq 0$. On the other hand, see \cite[Section 5.4]{WebbAbelianNonAbelian} for the proof that \textit{the product} $\prod_{\alpha\in \Delta} C(\tilde{\delta}, \alpha)$ over the roots of $G$ is always a well defined element of $A^\ast(V/\!/T)[z]_z$. 
	\begin{definition}
		Given $\tilde{\delta} \in \chi(T)^\vee$ we define the class in $A^\ast(V/\!/T)[z]_z$
		\begin{align}\label{cClass}
			C(\tilde{\delta}) := \prod_{\alpha\in \Delta} C(\tilde{\delta}, \alpha) ,
		\end{align}
		where the product is over the set $\Delta$ of roots of $G$.
	\end{definition}
	The crucial result that implies the abelianisation formula is the following
	\begin{pro}\label{AbelianisationClasses}
		Let $P \in A^\ast_G(\text{pt})$, $\delta_1, \delta_2 \in \chi(G)^\vee$ and set $\delta := \delta_1 + \delta_2$. Then the class
		\begin{align*}
			g^\ast \text{ev}_\ast \left(\text{CW}^\delta(P)_{\vert F_{\delta_1, \delta_2}} \cap \frac{[F_{\delta_1, \delta_2}]^{\text{vir}}}{e^{\mathbb{C}^\ast}(N^\text{vir}_{F_{\delta_1, \delta_2}})}\right)
		\end{align*}
		in $A_\ast(V(G)^\text{ss}/\!/T)[z]_z$ coincides with
		\begin{align*}
			j^\ast \sum_{\substack{\tilde{\delta}_1\mapsto \delta_1\\\tilde{\delta}_2\mapsto \delta_2}} C(\tilde{\delta}_1)\overline{C(\tilde{\delta}_2)} \cap \text{ev}_\ast\left(\text{CW}^{\tilde{\delta}}(P)_{\vert F_{\tilde{\delta}_1, \tilde{\delta}_2}} \cap \frac{[F_{\tilde{\delta}_1, \tilde{\delta}_2}]^{\text{vir}}}{e^{\mathbb{C}^\ast}(N^\text{vir}_{F_{\tilde{\delta}_1, \tilde{\delta}_2}})}\right),
		\end{align*}
		where $\tilde{\delta}:= \tilde{\delta}_1+\tilde{\delta}_2$.
	\end{pro}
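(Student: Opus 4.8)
The plan is to reduce everything to the abelian (toric) setting, where the fixed loci and the Chern–Weil morphism have the explicit descriptions from Examples~\ref{exampleToricqmaps1}--\ref{exampleToricqmaps3} and Lemma~\ref{ChernWeilEvaluation}, and then transport the resulting identity along the maps $j$ and $g$ in diagram~(\ref{abelianisationDiagram}). First I would dispose of the Chern–Weil insertion $P$: by Proposition~\ref{splittingOfCW} (and its analogue on $V/\!/T$) the class $\text{CW}^\delta(P)$ restricted to $F_{\delta_1,\delta_2}$ is pulled back via $p_2$ from $\text{CW}^{\delta_2}(P)_{|F_{0,\delta_2}}$, so using Corollary~\ref{splitting_contributions} the left-hand side factors as an intersection product on $V/\!/G$ of $\text{ev}_\ast$ of the $(\delta_1,0)$-piece (which is $I^{V/\!/G}_{\delta_1}$) with $\text{ev}_\ast$ of the $(0,\delta_2)$-piece carrying the insertion. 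The right-hand side admits the same factorisation on $V/\!/T$: by the product formula for $C(\tilde\delta)$ and the splitting of virtual classes, the $\tilde\delta$-summand factors into a $\tilde\delta_1$-part and a $\tilde\delta_2$-part. So it suffices to prove the statement separately for the two building blocks $F_{\delta,0}$ (with $P=1$; this is essentially an abelianisation statement for the $I$-function) and $F_{0,\delta}$ (with general $P$), and the second reduces to the first by the $\mathbb{P}^1$-coordinate-swap automorphism together with Lemma~\ref{ChernWeilEvaluation}, which explains why $\overline{C(\tilde\delta_2)}$ (evaluated at $-z$) and the $\langle\tilde\delta_2,\alpha\rangle z$ shifts appear.

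Next, for the $I$-function building block, I would invoke Webb's abelianisation result for the $I$-function \cite{WebbAbelianNonAbelian}: the statement that $g^\ast j_! I^{V/\!/G}_\delta$ (suitably interpreted through the open embedding $j$ and the $G/T$-bundle $g$) equals $\sum_{\tilde\delta\mapsto\delta} C(\tilde\delta)\cap I^{V/\!/T}_{\tilde\delta}$, up to restricting to $V(G)^{\text{ss}}/\!/T$. The point is that the fixed locus $F_{\delta,0}\subset Q(V/\!/G,\delta)$ parametrises quasimaps with a single basepoint at $0$ whose underlying map to $V/\!/G$ is constant; such a quasimap is recorded by a point of $V/\!/G$ together with the choice of a reduction, which is exactly the data of a point of $V(G)^{\text{ss}}/\!/T$ fibred over a choice of $T$-degree $\tilde\delta$ lifting $\delta$. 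Comparing the perfect obstruction theories of $F_{\delta,0}$ (for $V/\!/G$) and $F_{\tilde\delta,0}$ (for $V/\!/T$) along this correspondence produces exactly the extra factors $\prod_{\alpha\in\Delta} C(\tilde\delta,\alpha)$ coming from the roots of $G$, because the difference of the tangent-obstruction complexes is governed by $R\pi_\ast$ of the adjoint bundle $\mathfrak g/\mathfrak t$ twisted by the degree, which Euler-class-computes to $C(\tilde\delta)$. I would carry this out by writing the localised virtual normal bundle of each fixed locus as $R\pi_\ast$ of the relevant complex restricted to the fixed part, and reading off the $\mathbb{C}^\ast$-weights.

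The main obstacle will be keeping the bookkeeping of the $\mathbb{C}^\ast$-equivariant structures and the two different virtual classes consistent under pullback along $j$ and $g$ — in particular, checking that Webb's comparison, originally phrased for $I$-functions on the ambient quotients, is compatible with restriction to the \emph{$G$-semistable} locus $V(G)^{\text{ss}}/\!/T$ (where $j$ becomes an open immersion and the unstable contributions that could spoil the identity on all of $V/\!/T$ are removed), and that the evaluation maps $\text{ev}$ from the $G$- and $T$-fixed loci fit into the square~(\ref{abelianisationDiagram}) so that $g^\ast\text{ev}_\ast = j^\ast(\text{stuff})$ holds by flat base change for the $G/T$-bundle. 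Once that compatibility is in place, combining the two building blocks via the factorisations above, multiplying the correction classes to get $C(\tilde\delta_1)\overline{C(\tilde\delta_2)}$, and summing over all lifts $\tilde\delta_1\mapsto\delta_1$, $\tilde\delta_2\mapsto\delta_2$ yields the claimed identity. The remaining verifications — that the signs and $z$-shifts in $C(\tilde\delta,\alpha)$ match those produced by Lemma~\ref{ChernWeilEvaluation} on the $(0,\delta_2)$-side, and that the $\overline{(\cdot)}$ operation correctly accounts for the $t\mapsto t^{-1}$ swap — are routine once the structural framework is set up.
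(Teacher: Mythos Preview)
Your approach is essentially the same as the paper's: both apply Corollary~\ref{splitting_contributions} to factor each side into a $(\delta_1,0)$-piece and a $(0,\delta_2)$-piece, invoke Webb's abelianisation of the $I$-function for the former, and treat the latter (which carries the insertion $P$) separately.

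The one point where your sketch diverges is the claim that the $(0,\delta)$-with-$P$ building block \emph{reduces} to the $(\delta,0)$-with-$P=1$ building block via the coordinate swap together with Lemma~\ref{ChernWeilEvaluation}. This does not quite go through: the swap converts $\text{CW}^\delta_\infty$ into $\text{CW}^\delta_0$, and Lemma~\ref{ChernWeilEvaluation} is stated and proved only for toric targets, so it cannot by itself absorb the insertion on the $G$-side. The paper instead states the $(0,\delta)$-with-$P$ case as a separate Lemma~\ref{generalisation_webb} and proves it in Appendix~\ref{appendix_proof_key_proposition} by re-running Webb's argument with the extra insertion, the key new ingredient being Lemma~\ref{psiCW}, which shows that the Chern--Weil classes for $G$ and $T$ agree after pulling back along $\psi: F^0_{0,\tilde\delta}\to \mathcal{F}_{0,\tilde\delta}$. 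Once that compatibility is in hand, Webb's use of Brion's lemma and the Weyl-orbit sum goes through verbatim with $\text{CW}(P)$ carried along. So your structural outline is correct, but the reduction step you propose should be replaced by this direct argument.
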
 
	\subsubsection{Reduction to a simpler result}
	We want to prove that, if we manage to show the following simpler result, then Proposition \ref{AbelianisationClasses} follows from facts that we already know.
	\begin{lem}\label{generalisation_webb}
		Fix $P \in A^\ast_G(\text{pt})$ and $\delta \in \chi(G)^\vee$. Then the class
		\begin{align*}
			g^\ast \text{ev}_\ast \left(\text{CW}^\delta(P)_{\vert F_{0, \delta}} \cap \frac{[F_{0, \delta}]^{\text{vir}}}{e^{\mathbb{C}^\ast}(N^\text{vir}_{F_{0, \delta}})}\right)
		\end{align*}
		in $A_\ast(V(G)^\text{ss}/\!/T)[z]_z$ coincides with
		\begin{align*}
			j^\ast \sum_{\tilde{\delta}\mapsto \delta} \overline{C(\tilde{\delta})} \cap \text{ev}_\ast\left(\text{CW}^{\tilde{\delta}}(P)_{\vert F_{0, \tilde{\delta}}} \cap \frac{[F_{0, \tilde{\delta}}]^{\text{vir}}}{e^{\mathbb{C}^\ast}(N^\text{vir}_{F_{0, \tilde{\delta}}})}\right).
		\end{align*}
	\end{lem}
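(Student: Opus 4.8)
The statement is an abelianisation of the quasimap $I$-function --- read off from the $(0,\bullet)$ fixed loci through the identity $\text{ev}_\ast\big(\tfrac{[F_{0,\delta}]^{\text{vir}}}{e^{\mathbb{C}^\ast}(N^{\text{vir}}_{F_{0,\delta}})}\big)=I^{V/\!/G}_\delta(-z)$ recalled in the text and its abelian counterpart --- enhanced by the characteristic‑class insertion $\text{CW}(P)$. For $P=1$ it is exactly Webb's abelian/non‑abelian correspondence for the quasimap $I$-function in \cite{WebbAbelianNonAbelian}, transported from the $(\bullet,0)$ loci to the $(0,\bullet)$ loci by the automorphism of $\mathbb{P}^1$ exchanging the two coordinates (this is what turns Webb's correction factor into $\overline{C(\tilde\delta)}$). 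So the plan is to re‑run Webb's argument with the insertion $\text{CW}^\delta(P)$ carried along from the start.

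Webb's argument compares $Q(V/\!/G,\delta)$ with the abelianised quasimap space over $V(G)^{\text{ss}}/\!/T$, whose relevant fixed loci are, up to the open restriction $j\colon V(G)^{\text{ss}}/\!/T\hookrightarrow V/\!/T$, organised by the lifts $\tilde\delta\mapsto\delta$. Two ingredients must be tracked through this comparison. First, the obstruction theory (\ref{obsTheory}): writing $\mathfrak{g}=\mathfrak{t}\oplus\bigoplus_{\alpha\in\Delta}\mathfrak{g}_\alpha$ as a $T$-representation, the $\mathfrak{t}$-part reproduces the obstruction theory of $V/\!/T$, while the complementary $\bigoplus_{\alpha\in\Delta}\mathfrak{g}_\alpha$-part is excess and, after passing to the fixed loci and forming the localised classes, contributes precisely the factor $\overline{C(\tilde\delta)}$ --- crucially, this part does not involve $P$. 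Second, the universal principal $G$-bundle pulls back to the abelianised side to a $G$-bundle admitting the universal $T$-bundle of degree $\tilde\delta$ as a reduction of structure group; since $P\in A^\ast_G(\text{pt})=A^\ast_T(\text{pt})^W$ is Weyl‑invariant, its Chern--Weil class is independent of the choice of such a reduction, so the insertion $\text{CW}^\delta(P)$ is carried to $\text{CW}^{\tilde\delta}(P)$ on the $\tilde\delta$-labelled piece. Combining Webb's identification on the abelianised side with these two facts, then restricting along $j$ and taking $\mathbb{C}^\ast$-fixed parts, yields the asserted equality.

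It is clarifying to record the shape of the final identity: using Lemma \ref{ChernWeilEvaluation} on the $V/\!/T$ side (and its evident analogue on the $V/\!/G$ side --- that on the $\tilde\delta$-labelled part of $F_{0,\delta}$ the universal bundle at $\infty$ is $\text{ev}^\ast$ of the tautological $G$-bundle, twisted in the $\mathbb{C}^\ast$-direction by $\tilde\delta$) one pulls the insertions out of $\text{ev}_\ast$ as honest pullbacks, and the equality reduces, given Webb's $P=1$ statement, to a term‑by‑term match over $\tilde\delta\mapsto\delta$. The main obstacle is the second ingredient above: one must verify that the reduction of structure group on the abelianised side is compatible with the $\mathbb{C}^\ast$-equivariant structures, so that the $\mathbb{C}^\ast$-weight ($z$-)shift carried by the insertion on the $V/\!/G$ side matches the $\tilde\delta$-dependent shift appearing through Lemma \ref{ChernWeilEvaluation} on the $V/\!/T$ side; once this --- together with the $P$-independence of the correction $\overline{C(\tilde\delta)}$ --- is in place, the remainder is the routine manipulation of localised virtual classes and Weyl‑group covariance already developed in the paper.
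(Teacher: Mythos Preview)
Your proposal is correct and takes essentially the same approach as the paper's proof (Appendix \ref{appendix_proof_key_proposition}). The two ingredients you isolate are exactly the paper's Lemma \ref{psi_lemma} (Webb's comparison of obstruction theories, yielding the $P$-independent factor $\overline{C(\tilde\delta)}$) and Lemma \ref{psiCW} (the reduction-of-structure-group compatibility $\psi^\ast\text{CW}^\delta(P)=h^\ast\text{CW}^{\tilde\delta}(P)$, which is the only new step beyond Webb's $P=1$ argument); the paper then assembles these via Brion's lemma for the fibration through the parabolic $P_{\tilde\delta}$ and the Weyl-orbit bookkeeping you allude to.
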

	Notice that if $P=1$ then this is nothing but \cite[Theorem 1.1.1]{WebbAbelianNonAbelian} for $F_{0,\delta}$ instead of $F_{\delta,0}$ (which just exchanges $z$ with $-z$ in the formulae). The same proof of \cite[Theorem 1.1.1]{WebbAbelianNonAbelian} goes through also for $P \neq 1$, for details see Appendix \ref{appendix_proof_key_proposition}.
	\begin{proof}[Proof of Proposition \ref{AbelianisationClasses}]
		We start by applying Corollary \ref{splitting_contributions} to both classes in the statement of Proposition \ref{AbelianisationClasses}, splitting the contributions of type $(\delta,0)$ from the ones of type $(0,\delta)$. For the former type the result follows from \cite[Theorem 1.1.1]{WebbAbelianNonAbelian}, while for the latter we just need Lemma \ref{generalisation_webb}.
	\end{proof}
	\subsection{The abelianisation formula}
	We are ready to prove the abelianisation formula for quasimap invariants. We now need to use this classical result, proven in the symplectic category by Martin \cite{Martin} and in the algebraic one by Maddock \cite{maddock2014ratio}:
	\begin{theorem}\label{MartinTheorem}
		Assume that $\alpha \in A^\ast(V/\!/G)$ and $\beta \in A^\ast(V/\!/T)$ are such that $g^\ast \alpha = j^\ast\beta$. Then 
		\begin{align*}
			\int_{V/\!/G} \alpha = \frac{1}{\vert W \vert} \int_{V/\!/T} \beta \prod_{\alpha \in \Delta} c_1(\mathcal{L}_{\alpha})
		\end{align*}
		where $\vert W\vert$ is the order of the Weyl group of $G$.
	\end{theorem}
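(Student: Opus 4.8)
This is Martin's comparison theorem, proved in the symplectic category in \cite{Martin} and in the algebraic one in \cite{maddock2014ratio}; here is the route I would take. The geometry to exploit is the intermediate quotient $V(G)^{\text{ss}}/\!/T$ of diagram (\ref{abelianisationDiagram}): it maps to $V/\!/G$ by the smooth morphism $g$ with fibre $G/T$, and it embeds into $V/\!/T$ by the open immersion $j$. Since only degrees of top-dimensional classes are at stake, I would first pass, via the Kempf--Ness deformation retracts, to compact models: writing $Y:=\mu_G^{-1}(0)/T_{\mathbb{R}}$, where $T_{\mathbb{R}}\subset T$ is the maximal compact torus and $\mu_G$ a moment map for the maximal compact $K\subset G$, the space $Y$ sits inside the compact models of $V/\!/G$ and of $V/\!/T$ both as an honest fibre bundle $p\colon Y\to V/\!/G$ with \emph{compact} fibre the flag manifold $G/B$ and as a closed submanifold $i\colon Y\hookrightarrow V/\!/T$ of real codimension $|\Delta|$. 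The geometric heart of the matter, due to \cite{Martin}, consists of two assertions about this configuration: $Y$ is the transverse zero locus in $V/\!/T$ of the section of a vector bundle $\mathcal{V}_1$ induced by the $\mathfrak{g}/\mathfrak{t}$-part of the moment map, where $\mathcal{V}_1$ is the bundle associated to one of the two ``halves'' $\mathfrak{g}_+,\mathfrak{g}_-$ of $\mathfrak{g}/\mathfrak{t}$, so that $i_\ast(1)=e(\mathcal{V}_1)$; and the vertical tangent bundle $T_p$ of the flag bundle is the bundle associated to the complementary half $\mathfrak{g}_2$ (because $T_{eB}G/B=\mathfrak{g}/\mathfrak{b}$), so that $e(T_p)=i^\ast e(\mathcal{V}_2)$. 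With compatible choices of complex structures and orientations one has $e(\mathcal{V}_1)\cup e(\mathcal{V}_2)=\prod_{\alpha\in\Delta}c_1(\mathcal{L}_\alpha)$ in $A^\ast(V/\!/T)$, and $p_\ast e(T_p)$ equals the Euler characteristic of $G/B$, which is $|W|$.

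Granting this, the proof is a short computation. By the projection formula for the proper map $p$ together with $p_\ast e(T_p)=|W|$,
\[
\int_{Y} p^\ast\alpha\cup e(T_p)\;=\;\int_{V/\!/G}\alpha\cup p_\ast e(T_p)\;=\;|W|\int_{V/\!/G}\alpha .
\]
Transporting the hypothesis $g^\ast\alpha=j^\ast\beta$ across the Kempf--Ness retract gives $p^\ast\alpha=i^\ast\beta$ on $Y$, and combining this with $e(T_p)=i^\ast e(\mathcal{V}_2)$ yields $\int_Y p^\ast\alpha\cup e(T_p)=\int_Y i^\ast\!\big(\beta\cup e(\mathcal{V}_2)\big)$. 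Finally the projection formula for the closed embedding $i$, followed by $i_\ast(1)=e(\mathcal{V}_1)$ and $e(\mathcal{V}_1)\cup e(\mathcal{V}_2)=\prod_{\alpha\in\Delta}c_1(\mathcal{L}_\alpha)$, gives
\[
\int_Y i^\ast\!\big(\beta\cup e(\mathcal{V}_2)\big)\;=\;\int_{V/\!/T}\beta\cup e(\mathcal{V}_2)\cup i_\ast(1)\;=\;\int_{V/\!/T}\beta\prod_{\alpha\in\Delta}c_1(\mathcal{L}_\alpha) .
\]
Putting the two displays together and dividing by $|W|$ gives the claimed formula.

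The substantive part, and the one I expect to be the main obstacle, is precisely the input isolated in the first paragraph: showing that $Y\subset V/\!/T$ is the transverse vanishing locus of a section of $\mathcal{V}_1$ (so that $i_\ast(1)=e(\mathcal{V}_1)$), and pinning down the complex structures on $T_p$, on the normal bundle $\nu_Y$ and on the $\mathcal{L}_\alpha$ precisely enough that the two Euler classes multiply to exactly $\prod_{\alpha\in\Delta}c_1(\mathcal{L}_\alpha)$ \emph{with no spurious sign}. This is where the moment-map picture and the Kempf--Ness retract are used seriously; it is essentially the content of \cite{Martin}. An alternative that stays inside algebraic geometry is Maddock's \cite{maddock2014ratio}: stratify the complement of $V(G)^{\text{ss}}$ inside the semistable locus for $T$ by Hesselink--Kempf--Ness strata, observe that $\prod_{\alpha\in\Delta}c_1(\mathcal{L}_\alpha)$ restricts to zero on the induced locally closed subsets of $V/\!/T$, and conclude that $\int_{V/\!/T}\beta\prod_{\alpha\in\Delta}c_1(\mathcal{L}_\alpha)$ can be computed over the open piece $V(G)^{\text{ss}}/\!/T$ and there matched, via $g$ and the flag-bundle computation above, with $|W|\int_{V/\!/G}\alpha$. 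Everything else — the two applications of the projection formula and the Poincaré--Hopf fact that $\int_{G/B}e(T_{G/B})$ is the number of $T$-fixed points of $G/B$, namely $|W|$ — is routine.
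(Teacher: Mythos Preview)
The paper does not prove this theorem at all: it is quoted as a black-box input, introduced with ``this classical result, proven in the symplectic category by Martin \cite{Martin} and in the algebraic one by Maddock \cite{maddock2014ratio}'', and then used immediately. So there is no ``paper's own proof'' to compare against.

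Your sketch is a faithful outline of Martin's original symplectic argument, and your closing paragraph correctly identifies both the delicate point (transversality of the section cutting out $Y$ and the sign/orientation bookkeeping that makes the two Euler classes multiply to exactly $\prod_{\alpha\in\Delta}c_1(\mathcal{L}_\alpha)$) and the purely algebraic alternative of Maddock. For the purposes of this paper, however, you would be entitled simply to cite \cite{Martin, maddock2014ratio} and move on, exactly as the author does.
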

	Once we define the class $D$ as in (\ref{Dfunction}), this allows to prove the following 
	\begin{lem}
		Let $\delta_1, \delta_2 \in \chi(G)^\vee$ and set $\delta:= \delta_1+ \delta_2$. For every $P \in A_G^\ast(\text{pt})$, the equality between the localised contribution to quasimap invariants
		\begin{align*}
			\int_{[F_{\delta_1, \delta_2}]^\text{vir}} \frac{\text{CW}^\delta(P)}{e^{\mathbb{C}^\ast}(N^\text{vir}_{F_{\delta_1, \delta_2}})} =  \frac{1}{\vert W \vert}\sum_{\substack{\tilde{\delta}_1 \mapsto \delta_1\\\tilde{\delta}_2 \mapsto \delta_2}} \int_{[F_{\tilde{\delta}_1, \tilde{\delta}_2}]^\text{vir}}\frac{\text{CW}^{\tilde{\delta}}\left(P \prod_{\alpha \in \Delta} D(\tilde{\delta}, \alpha)\right)}{e^{\mathbb{C}^\ast}(N^\text{vir}_{F_{\tilde{\delta}_1, \tilde{\delta}_2}})}
		\end{align*}
		holds true.
	\end{lem}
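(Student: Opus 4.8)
The plan is to deduce this Lemma from the key input Proposition~\ref{AbelianisationClasses} together with Martin's theorem (Theorem~\ref{MartinTheorem}). First I would recast the left-hand side as an integral over $V/\!/G$ by pushing forward along the evaluation map: using the projection formula, $\int_{[F_{\delta_1,\delta_2}]^{\text{vir}}} \text{CW}^\delta(P)/e^{\mathbb{C}^\ast}(N^{\text{vir}})$ equals $\int_{V/\!/G} \alpha$, where $\alpha := \text{ev}_\ast\!\left(\text{CW}^\delta(P)_{\vert F_{\delta_1,\delta_2}} \cap [F_{\delta_1,\delta_2}]^{\text{vir}}/e^{\mathbb{C}^\ast}(N^{\text{vir}}_{F_{\delta_1,\delta_2}})\right)$ is a class in $A_\ast(V/\!/G)[z]_z$. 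Likewise, on the toric side, $\sum \int_{[F_{\tilde\delta_1,\tilde\delta_2}]^{\text{vir}}} \text{CW}^{\tilde\delta}(P\prod_\alpha D(\tilde\delta,\alpha))/e^{\mathbb{C}^\ast}(N^{\text{vir}})$ is $\int_{V/\!/T}$ of a corresponding class $\beta$ — but here I must be careful to separate the $D$-factor indexed by the roots, which is exactly $\text{CW}^{\tilde\delta}\!\left(\prod_{\alpha\in\Delta}(\alpha+\langle\tilde\delta,\alpha\rangle z)\right)$ up to the explicit sign $\prod_{\alpha\in\Delta^+}(-1)^{\langle\tilde\delta,\alpha\rangle+1}$ appearing in the definition of $\prod_\alpha D(\tilde\delta,\alpha)$, from the insertion $P$.

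The second step is to identify $\beta$ with Martin's $\prod_{\alpha\in\Delta}c_1(\mathcal{L}_\alpha)$ times a class $\beta_0$ that matches $\alpha$ under the comparison $g^\ast\alpha = j^\ast\beta_0$ of Proposition~\ref{AbelianisationClasses}. Concretely: Proposition~\ref{AbelianisationClasses} tells us that $g^\ast\alpha = j^\ast\!\left(\sum_{\tilde\delta_1\mapsto\delta_1,\tilde\delta_2\mapsto\delta_2} C(\tilde\delta_1)\overline{C(\tilde\delta_2)} \cap \text{ev}_\ast(\text{CW}^{\tilde\delta}(P)_{\vert F_{\tilde\delta_1,\tilde\delta_2}} \cap [F_{\tilde\delta_1,\tilde\delta_2}]^{\text{vir}}/e^{\mathbb{C}^\ast}(N^{\text{vir}}))\right)$. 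Set $\beta_0$ to be the bracketed sum. Martin's theorem then gives $\int_{V/\!/G}\alpha = \frac{1}{|W|}\int_{V/\!/T}\beta_0\prod_{\alpha\in\Delta}c_1(\mathcal{L}_\alpha)$, provided $\alpha$ and $\beta_0$ are honest cohomology classes with $g^\ast\alpha=j^\ast\beta_0$; since we work over $\mathbb{C}[z]_z$ and $V/\!/T$ is proper this extends coefficient-wise. It remains to check that $\beta_0\prod_{\alpha\in\Delta}c_1(\mathcal{L}_\alpha)$, integrated over $V/\!/T$, equals the right-hand side summand-by-summand.

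The heart of that final check is the identity, for each fixed pair of lifts $\tilde\delta_1\mapsto\delta_1$, $\tilde\delta_2\mapsto\delta_2$ with $\tilde\delta=\tilde\delta_1+\tilde\delta_2$:
\begin{align*}
C(\tilde\delta_1)\,\overline{C(\tilde\delta_2)}\cap\text{ev}_\ast\!\left(\text{CW}^{\tilde\delta}(P)_{\vert F_{\tilde\delta_1,\tilde\delta_2}}\cap \tfrac{[F_{\tilde\delta_1,\tilde\delta_2}]^{\text{vir}}}{e^{\mathbb{C}^\ast}(N^{\text{vir}})}\right)\cap \prod_{\alpha\in\Delta}c_1(\mathcal{L}_\alpha)\\
= \text{ev}_\ast\!\left(\text{CW}^{\tilde\delta}\!\Big(P\prod_{\alpha\in\Delta}D(\tilde\delta,\alpha)\Big)_{\vert F_{\tilde\delta_1,\tilde\delta_2}}\cap \tfrac{[F_{\tilde\delta_1,\tilde\delta_2}]^{\text{vir}}}{e^{\mathbb{C}^\ast}(N^{\text{vir}})}\right),
\end{align*}
after which integrating over $V/\!/T$ and summing over all lifts yields the claim. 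To prove this one pulls everything inside the pushforward via the projection formula; then one must verify two things. First, $\text{ev}^\ast\!\left(C(\tilde\delta_1)\overline{C(\tilde\delta_2)}\cdot\prod_{\alpha\in\Delta}c_1(\mathcal{L}_\alpha)\right)_{\vert F_{\tilde\delta_1,\tilde\delta_2}} = \text{CW}^{\tilde\delta}\!\left(\prod_{\alpha\in\Delta}D(\tilde\delta,\alpha)\right)_{\vert F_{\tilde\delta_1,\tilde\delta_2}}$; here Lemma~\ref{ChernWeilEvaluation} is exactly the tool — it identifies $\text{CW}^{\tilde\delta}(\alpha+\langle\tilde\delta_2,\alpha\rangle z)_{\vert F_{\tilde\delta_1,\tilde\delta_2}}$ with $\text{ev}^\ast c_1(\mathcal{L}_\alpha)$, and then a telescoping-product manipulation (shifting the index $k$ by $\langle\tilde\delta_2,\alpha\rangle$ relates the two halves of $C$, since $\langle\tilde\delta,\alpha\rangle=\langle\tilde\delta_1,\alpha\rangle+\langle\tilde\delta_2,\alpha\rangle$) converts $\text{ev}^\ast\!\big(C(\tilde\delta_1,\alpha)\overline{C(\tilde\delta_2,\alpha)}\,c_1(\mathcal{L}_\alpha)\big)$ into $\text{CW}^{\tilde\delta}(D(\tilde\delta,\alpha))_{\vert F}$ — this is where the sign $(-1)^{\langle\tilde\delta,\alpha\rangle+1}$ and the pairing of opposite roots $\alpha,-\alpha$ into the single factor $\alpha\cdot(\alpha+\langle\tilde\delta,\alpha\rangle z)$ over $\Delta^+$ must be tracked with care. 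Second, the $P$-factor passes through untouched because $\text{CW}^{\tilde\delta}$ is a ring homomorphism and $\text{CW}^{\tilde\delta}(P)_{\vert F_{\tilde\delta_1,\tilde\delta_2}}$ is already the restriction appearing on the left. I expect the main obstacle to be precisely this last bookkeeping: matching the definition of $C$ (products of $c_1(\mathcal{L}_w)+kz$ with $k$ running over an interval \emph{anchored at} $0$) against the definition of $D$ (interval anchored at $-1$, with the extra sign), checking that the dependence on the splitting $\tilde\delta=\tilde\delta_1+\tilde\delta_2$ genuinely cancels so that the right-hand side depends only on $\tilde\delta$ (as its notation asserts), and confirming that the whole rational expression is well defined even when individual $C(\tilde\delta_i,\alpha)$ are not — which is guaranteed by \cite[Section 5.4]{WebbAbelianNonAbelian} applied to the full product over $\Delta$.
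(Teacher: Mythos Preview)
Your proposal is correct and follows essentially the same approach as the paper's proof: push forward to $V/\!/G$, invoke Proposition~\ref{AbelianisationClasses} to obtain the comparison $g^\ast\alpha = j^\ast\beta_0$, apply Martin's theorem~\ref{MartinTheorem}, and then use Lemma~\ref{ChernWeilEvaluation} together with the projection formula to convert $\text{ev}^\ast\bigl(\prod_{\alpha\in\Delta}C(\tilde\delta_1,\alpha)\overline{C(\tilde\delta_2,\alpha)}\,c_1(\mathcal{L}_\alpha)\bigr)$ into $\text{CW}^{\tilde\delta}\bigl(\prod_{\alpha\in\Delta}D(\tilde\delta,\alpha)\bigr)_{\vert F_{\tilde\delta_1,\tilde\delta_2}}$. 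The paper's proof is terser---it asserts the last identity directly from Lemma~\ref{ChernWeilEvaluation} without spelling out the index-shift and sign bookkeeping you describe---but the ingredients and their logical order are identical.
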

	\begin{proof}
		By applying Martin's formula together with the equality of Proposition \ref{AbelianisationClasses} we obtain that the localised contribution
		\begin{align}\label{FixedContr}
			\int_{[F_{\delta_1, \delta_2}]^\text{vir}} \frac{\text{CW}^\delta(P)}{e^{\mathbb{C}^\ast}(N^\text{vir}_{F_{\delta_1, \delta_2}})}
		\end{align}
		is equal to 
		\begin{align*}
			\frac{1}{\vert W \vert}\sum_{\substack{\tilde{\delta}_1 \mapsto \delta_1\\\tilde{\delta}_2 \mapsto \delta_2}} \int_{[F_{\tilde{\delta}_1, \tilde{\delta}_2}]^\text{vir}}\frac{\text{CW}^{\tilde{\delta}}(P)}{e^{\mathbb{C}^\ast}(N^\text{vir}_{F_{\tilde{\delta}_1, \tilde{\delta}_2}})} \text{ev}^\ast \left(\prod_{\alpha\in \Delta}C(\tilde{\delta}_1, \alpha)\overline{C(\tilde{\delta}_2, \alpha)}c_1(\mathcal{L}_\alpha)\right).
		\end{align*}
		By Lemma \ref{ChernWeilEvaluation} we have
		\begin{align*}
			\text{ev}^\ast \left(\prod_{\alpha\in \Delta}C(\tilde{\delta}_1, \alpha)\overline{C(\tilde{\delta}_2, \alpha)}c_1(\mathcal{L}_\alpha)\right) = \text{CW}^{\tilde{\delta}}\left(\prod_{\alpha \in \Delta} D(\tilde{\delta}, \alpha)\right)_{\vert F_{\tilde{\delta}_1, \tilde{\delta}_2}}
		\end{align*}
		and therefore (\ref{FixedContr}) is equal to
		\begin{align*}
			\frac{1}{\vert W \vert}\sum_{\substack{\tilde{\delta}_1 \mapsto \delta_1\\\tilde{\delta}_2 \mapsto \delta_2}} \int_{[F_{\tilde{\delta}_1, \tilde{\delta}_2}]^\text{vir}}\frac{\text{CW}^{\tilde{\delta}}\left(P \prod_{\alpha \in \Delta} D(\tilde{\delta}, \alpha)\right)}{e^{\mathbb{C}^\ast}(N^\text{vir}_{F_{\tilde{\delta}_1, \tilde{\delta}_2}})}.
		\end{align*}
	\end{proof}
	This finally allows to prove the main result of this first part of the paper:
	\begin{theorem}[Abelianisation formula for quasimap invariants.]\label{MainTheoremAbFor}
		Consider a degree $\delta \in \chi(G)^\vee$ and a class $P \in A_G^\ast(\text{pt})$. The $\mathbb{C}^\ast$-equivariant quasimap invariant of $V/\!/G$ of degree $\delta$ with insertion $P$ can be computed as a sum of $\mathbb{C}^\ast$-equivariant quasimap invariants on $V/\!/T$ of degree $\tilde{\delta}\in \chi(T)^\vee$, where $\tilde{\delta}_{\vert \chi(G)} = \delta$:
		\begin{align*}
			\int_{[Q(V/\!/G, \delta)]^{\text{vir}}} \text{CW}^\delta(P) = \frac{1}{\vert W \vert} \sum_{\tilde{\delta}\mapsto \delta} \int_{[Q(V/\!/T, \tilde{\delta})]^{\text{vir}}} \text{CW}^{\tilde{\delta}}\left(P \prod_{\alpha \in \Delta} D(\tilde{\delta}, \alpha) \right).
		\end{align*} 
		Here $W$ is the Weyl group of $G$.
	\end{theorem}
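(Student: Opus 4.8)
The plan is to derive Theorem \ref{MainTheoremAbFor} from the preceding lemma by applying the $\mathbb{C}^\ast$-localisation formula on both sides of the desired equality. First I would fix $\delta \in \chi(G)^\vee$ and apply the virtual localisation theorem of Graber-Pandharipande to the integral $\int_{[Q(V/\!/G,\delta)]^{\text{vir}}} \text{CW}^\delta(P)$. By Proposition \ref{SplitFixedLocus} the fixed locus decomposes as $\coprod_{\delta_1 + \delta_2 = \delta} F_{\delta_1, \delta_2}$, so localisation expresses the integral as
\begin{align*}
	\int_{[Q(V/\!/G, \delta)]^{\text{vir}}} \text{CW}^\delta(P) = \sum_{\delta_1 + \delta_2 = \delta} \int_{[F_{\delta_1, \delta_2}]^{\text{vir}}} \frac{\text{CW}^\delta(P)_{\vert F_{\delta_1, \delta_2}}}{e^{\mathbb{C}^\ast}(N^{\text{vir}}_{F_{\delta_1, \delta_2}})}.
\end{align*}
One technical point worth a remark is that the left-hand side is a polynomial in $z$ (it is a well-defined equivariant invariant), so although each summand on the right lies in $\mathbb{Q}[z]_z$, the total sum has no poles; this is automatic from the localisation theorem and need not be checked by hand.

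Next I would do the same on the toric side: for each $\tilde{\delta} \mapsto \delta$, localisation gives
\begin{align*}
	\int_{[Q(V/\!/T, \tilde{\delta})]^{\text{vir}}} \text{CW}^{\tilde{\delta}}\left(P \prod_{\alpha \in \Delta} D(\tilde{\delta}, \alpha)\right) = \sum_{\tilde{\delta}_1 + \tilde{\delta}_2 = \tilde{\delta}} \int_{[F_{\tilde{\delta}_1, \tilde{\delta}_2}]^{\text{vir}}} \frac{\text{CW}^{\tilde{\delta}}\left(P \prod_{\alpha \in \Delta} D(\tilde{\delta}, \alpha)\right)_{\vert F_{\tilde{\delta}_1, \tilde{\delta}_2}}}{e^{\mathbb{C}^\ast}(N^{\text{vir}}_{F_{\tilde{\delta}_1, \tilde{\delta}_2}})}.
\end{align*}
Now I match contributions summand by summand. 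The preceding lemma states exactly that the $(\delta_1, \delta_2)$-contribution on the left equals $\frac{1}{|W|}$ times the sum over all lifts $\tilde{\delta}_1 \mapsto \delta_1$, $\tilde{\delta}_2 \mapsto \delta_2$ of the corresponding $(\tilde{\delta}_1, \tilde{\delta}_2)$-contribution on the toric side (with the insertion $P \prod_\alpha D(\tilde{\delta}, \alpha)$). Summing this identity over all splittings $\delta_1 + \delta_2 = \delta$, and observing that a pair of lifts $(\tilde{\delta}_1, \tilde{\delta}_2)$ with $\tilde{\delta}_1 \mapsto \delta_1$, $\tilde{\delta}_2 \mapsto \delta_2$ is the same datum as a single lift $\tilde{\delta} \mapsto \delta$ together with a splitting $\tilde{\delta}_1 + \tilde{\delta}_2 = \tilde{\delta}$, the double sum on the right reorganises precisely into $\frac{1}{|W|} \sum_{\tilde{\delta} \mapsto \delta} \sum_{\tilde{\delta}_1 + \tilde{\delta}_2 = \tilde{\delta}}(\cdots)$, which by the toric localisation formula above is $\frac{1}{|W|}\sum_{\tilde{\delta}\mapsto\delta} \int_{[Q(V/\!/T,\tilde{\delta})]^{\text{vir}}} \text{CW}^{\tilde{\delta}}(P\prod_\alpha D(\tilde{\delta},\alpha))$, as desired.

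The main obstacle is really contained in the preceding lemma, whose proof relies on Proposition \ref{AbelianisationClasses} (and hence on Martin's theorem and the generalisation of Webb's result, Lemma \ref{generalisation_webb}); at the level of Theorem \ref{MainTheoremAbFor} itself the only thing to be careful about is the bookkeeping of the index sets — checking that the reindexing of $\{(\delta_1,\delta_2,\tilde\delta_1,\tilde\delta_2) : \delta_1+\delta_2 = \delta,\ \tilde\delta_i \mapsto \delta_i\}$ as $\{(\tilde\delta, \tilde\delta_1, \tilde\delta_2) : \tilde\delta \mapsto \delta,\ \tilde\delta_1 + \tilde\delta_2 = \tilde\delta\}$ is a genuine bijection (it is, since $\tilde\delta_i$ determines $\delta_i$ by restriction and conversely $\delta_i$ is recovered from $\tilde\delta_i$), and that the sums converge/terminate appropriately — both $F_{\delta_1,\delta_2}$ and $F_{\tilde\delta_1,\tilde\delta_2}$ are empty unless the relevant effectivity conditions hold, so all sums are finite. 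With these observations the theorem follows immediately from the lemma and two applications of virtual $\mathbb{C}^\ast$-localisation.
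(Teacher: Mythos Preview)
Your proof is correct and follows essentially the same approach as the paper: apply virtual $\mathbb{C}^\ast$-localisation on both $Q(V/\!/G,\delta)$ and each $Q(V/\!/T,\tilde\delta)$, invoke the preceding lemma to match the localised contributions, and reindex the resulting double sum. The only cosmetic difference is that the paper localises on the $G$-side, applies the lemma, rearranges, and then reassembles via localisation on the $T$-side, whereas you localise both sides first and then match---logically these are the same argument.
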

	\begin{proof}
		By $\mathbb{C}^\ast$-localisation on $Q(V/\!/G, \delta)$ we can write
		\begin{align*}
			\int_{[Q(V/\!/G, \delta)]^{\text{vir}}} \text{CW}^\delta(P) &= \sum_{\delta_1+\delta_2 = \delta} \int_{[F_{\delta_1, \delta_2}]^{\text{vir}}} \frac{\text{CW}^\delta(P)}{e^{\mathbb{C}^\ast}(N^\text{vir}_{F_{\delta_1, \delta_2}})}\\	
			&= \frac{1}{\vert W \vert}\sum_{\delta_1+\delta_2 = \delta} \sum_{\substack{\tilde{\delta}_1 \mapsto \delta_1\\\tilde{\delta}_2 \mapsto \delta_2}} \int_{[F_{\tilde{\delta}_1, \tilde{\delta}_2}]^\text{vir}}\frac{\text{CW}^{\tilde{\delta}}\left(P \prod_{\alpha \in \Delta} D(\tilde{\delta}, \alpha)\right)}{e^{\mathbb{C}^\ast}(N^\text{vir}_{F_{\tilde{\delta}_1, \tilde{\delta}_2}})}.
		\end{align*} 
		By rearranging the sum we find that this quasimap invariant coincides with
		\begin{align*}
			&\frac{1}{\vert W \vert}\sum_{\tilde{\delta}\mapsto \delta}\sum_{\tilde{\delta}_1 + \tilde{\delta}_2 = \tilde{\delta}} \int_{[F_{\tilde{\delta}_1, \tilde{\delta}_2}]^\text{vir}}\frac{\text{CW}^{\tilde{\delta}}\left(P \prod_{\alpha \in \Delta} D(\tilde{\delta}, \alpha)\right)}{e^{\mathbb{C}^\ast}(N^\text{vir}_{F_{\tilde{\delta}_1, \tilde{\delta}_2}})}\\
			=& \frac{1}{\vert W \vert}\sum_{\tilde{\delta}\mapsto \delta} \int_{[Q(V/\!/T, \tilde{\delta})]^\text{vir}} \text{CW}^{\tilde{\delta}}\left(P \prod_{\alpha \in \Delta} D(\tilde{\delta}, \alpha)\right)
		\end{align*}
		again by $\mathbb{C}^\ast$-localisation on $Q(V/\!/T, \tilde{\delta})$.
	\end{proof}
	\begin{rem}
		The product $\prod_{\alpha \in \Delta} D(\tilde{\delta}, \alpha)$ can be simplified by noting that the roots of $G$ come in positive/negative pairs. In this way we find that
		\begin{align*}
			\prod_{\alpha \in \Delta} D(\tilde{\delta}, \alpha) = \prod_{\alpha \in \Delta^+}(-1)^{\langle\tilde{\delta},\alpha\rangle+1} \alpha \cdot (\alpha + \langle \tilde{\delta},\alpha\rangle z),
		\end{align*}
		which belongs to $A_G^\ast(\text{pt})[z]$.
	\end{rem}
	\begin{rem}
		Clearly the abelianisation formula works, by setting $z=0$ on the right-hand side of the formula, for non-equivariant invariants too.
	\end{rem}
	\section{Jeffrey-Kirwan formulae}\label{SectionJKFormulae}
	Let's quickly recall the content of the Jeffrey-Kirwan localisation formula \cite{JeffreyKirwan, SzenesVergne} specialised to our context.
	Let $\mathbb{C}^\ast$ act on $V$ via the cocharacter $\tilde{\delta}$. The $\mathbb{C}^\ast$\textit{-equivariant Kirwan map} for the action of $T$ on $V$ is the composition
	\begin{align*}
		r_{\tilde{\delta}} : A_{T\times \mathbb{C}^\ast}(V) \xrightarrow{i^\ast} A_{T\times \mathbb{C}^\ast}(V(T)^\text{ss}) \simeq A_{\mathbb{C}^\ast}(V/\!/T).
	\end{align*}
	In other words the Kirwan map is the ring homomorphism satisfying $r(z)=z$ and $r_{\tilde{\delta}}(\alpha) = c_1(\mathcal{L}_\alpha) - \langle \tilde{\delta}, \alpha \rangle z$ for all $\alpha \in \chi(T)$. The $\mathbb{C}^\ast$-equivariant version of the Jeffrey-Kirwan localisation theorem \cite{OntaniJKforDT} states that for a generic $z \in \mathbb{C}$ and for every $P \in A_{T \times \mathbb{C}^\ast}^\ast(V)$
	\begin{align*}
		\int_{V/\!/T}r_{\tilde{\delta}}(P) = \text{JK}_{-z\tilde{\delta}}^\mathfrak{A}\left(\frac{P}{\prod_{\rho \in \mathfrak{A}}(\rho+ \langle \tilde{\delta}, \rho \rangle z)}\right),
	\end{align*}
	where $\text{JK}$ denotes the Jeffrey-Kirwan residue as described in \cite{BrionVergne, SzenesVergne, OntaniJKforDT}.
	\subsection{The abelian case}
	By using this localisation formula we can express the equivariant quasimap invariants of $V/\!/T$ as residues, proving an equivariant version of the analogous result \cite[Equation (3.2)]{SzenesVergne} of Szenes and Vergne.
	\begin{pro}\label{JKAbelian}
		Let $\tilde{\delta} \in \chi(T)^\vee$ and $P \in A_T^\ast(\text{pt})$. Then, for a generic $z \in \mathbb{C}$, the equality 
		\begin{align*}
			\int_{[Q(V/\!/T, \tilde{\delta})]^{\text{vir}}} \text{CW}^{\tilde{\delta}}(P) = \sum_{\tilde{\delta}_1 + \tilde{\delta}_2 = \tilde{\delta}} \text{JK}^{\mathfrak{A}}_{-z\tilde{\delta}_2}\left(Z_{\tilde{\delta}}(-,z)P\right).
		\end{align*}
		holds true where the rational function $Z_{\tilde{\delta}} : \chi(T)^\vee_\mathbb{C}\times \mathbb{C} \dashrightarrow \mathbb{C}$ is defined as
		\begin{align*}
			Z_{\tilde{\delta}} := \prod_{\rho \in \mathfrak{A}}D(\tilde{\delta}, \rho)^{-1}.
		\end{align*}
		Here $Z_{\tilde{\delta}}(-,z)$ is the rational function on $\chi(T)_\mathbb{C}^\vee$ obtained by evaluating $Z_{\tilde{\delta}}$ at the chosen $z \in \mathbb{C}$.
	\end{pro}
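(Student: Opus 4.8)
The plan is to combine $\mathbb{C}^\ast$-localisation on the toric quasimap space $Q(V/\!/T,\tilde{\delta})$ with the $\mathbb{C}^\ast$-equivariant Jeffrey--Kirwan localisation formula recalled at the beginning of this section. First I would invoke the virtual localisation theorem: by Proposition~\ref{SplitFixedLocus} the fixed locus of $Q(V/\!/T,\tilde{\delta})$ is $\coprod_{\tilde{\delta}_1+\tilde{\delta}_2=\tilde{\delta}}F_{\tilde{\delta}_1,\tilde{\delta}_2}$, so
\begin{align*}
\int_{[Q(V/\!/T,\tilde{\delta})]^{\text{vir}}}\text{CW}^{\tilde{\delta}}(P)=\sum_{\tilde{\delta}_1+\tilde{\delta}_2=\tilde{\delta}}\int_{[F_{\tilde{\delta}_1,\tilde{\delta}_2}]^{\text{vir}}}\frac{\text{CW}^{\tilde{\delta}}(P)}{e^{\mathbb{C}^\ast}(N^{\text{vir}}_{F_{\tilde{\delta}_1,\tilde{\delta}_2}})},
\end{align*}
and it is enough to identify the $(\tilde{\delta}_1,\tilde{\delta}_2)$-summand with $\text{JK}^{\mathfrak{A}}_{-z\tilde{\delta}_2}\!\left(Z_{\tilde{\delta}}(-,z)\,P\right)$.

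To do so I would make that summand completely explicit through the toric description of Examples~\ref{exampleToricqmaps1}--\ref{exampleToricqmaps3}. There $F_{\tilde{\delta}_1,\tilde{\delta}_2}$ is the GIT quotient by $T$ of the coordinate subspace of $V$ spanned by the weight spaces $V_{\rho_j}$ with $\langle\tilde{\delta}_1,\rho_j\rangle\ge 0$ and $\langle\tilde{\delta}_2,\rho_j\rangle\ge 0$, the evaluation map $\text{ev}\colon F_{\tilde{\delta}_1,\tilde{\delta}_2}\to V/\!/T$ is the induced closed embedding, and the residual $\mathbb{C}^\ast$-action on $V/\!/T$ is the one induced by the cocharacter $\tilde{\delta}_2$. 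Three ingredients then feed into the summand: (i) $\text{CW}^{\tilde{\delta}}(P)_{\vert F_{\tilde{\delta}_1,\tilde{\delta}_2}}=\text{ev}^\ast r_{\tilde{\delta}_2}(P)$, which follows from Lemma~\ref{ChernWeilEvaluation} and the identity $r_{\tilde{\delta}_2}(\alpha)=c_1(\mathcal{L}_\alpha)-\langle\tilde{\delta}_2,\alpha\rangle z$ since both sides are ring homomorphisms out of $A^\ast_T(\text{pt})$ agreeing on the generators $\alpha\in\chi(T)$; (ii) the equivariant Euler class $e^{\mathbb{C}^\ast}(N^{\text{vir}}_{F_{\tilde{\delta}_1,\tilde{\delta}_2}})$, computed by restricting the obstruction complex~(\ref{obsTheory}) to the fixed locus and taking $R\pi_\ast$, i.e. working with $\bigoplus_j H^\bullet(\mathbb{P}^1,\mathcal{O}(\langle\tilde{\delta},\rho_j\rangle))$ with its $\mathbb{C}^\ast$-linearisation and discarding the $\mathbb{C}^\ast$-fixed part (which recovers $T_{F_{\tilde{\delta}_1,\tilde{\delta}_2}}$); and (iii) the restriction of $[Q(V/\!/T,\tilde{\delta})]^{\text{vir}}$, read off from the formula for $[L(\tilde{\delta})/\!/T]^{\text{vir}}$ in Example~\ref{exampleToricqmaps1}. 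Putting these together, using the projection formula along $\text{ev}$ and the fact that $\text{ev}_\ast$ of the product of the $c_1(\mathcal{L}_{\rho_j})$ cutting out $F_{\tilde{\delta}_1,\tilde{\delta}_2}$ inside $V/\!/T$ is again a Kirwan image, I would rewrite the summand as $\int_{V/\!/T}r_{\tilde{\delta}_2}\!\left(\Phi_{\tilde{\delta}}\,P\right)$ for an explicit rational function $\Phi_{\tilde{\delta}}$ in the $\alpha$'s and $z$.

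It then remains to apply the $\mathbb{C}^\ast$-equivariant Jeffrey--Kirwan formula recalled above with $\mathbb{C}^\ast$ acting via $\tilde{\delta}_2$ (in its form allowing rational insertions, as in \cite{BrionVergne, SzenesVergne, OntaniJKforDT}), getting
\begin{align*}
\int_{V/\!/T}r_{\tilde{\delta}_2}\!\left(\Phi_{\tilde{\delta}}\,P\right)=\text{JK}^{\mathfrak{A}}_{-z\tilde{\delta}_2}\!\left(\frac{\Phi_{\tilde{\delta}}\,P}{\prod_{\rho\in\mathfrak{A}}(\rho+\langle\tilde{\delta}_2,\rho\rangle z)}\right),
\end{align*}
and then checking, one weight $\rho\in\mathfrak{A}$ at a time, that $\Phi_{\tilde{\delta}}/\prod_{\rho\in\mathfrak{A}}(\rho+\langle\tilde{\delta}_2,\rho\rangle z)$ equals $Z_{\tilde{\delta}}(-,z)=\prod_{\rho\in\mathfrak{A}}D(\tilde{\delta},\rho)^{-1}$, independently of the splitting. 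Concretely, the three branches of the definition~(\ref{Dfunction}) of $D(\tilde{\delta},\rho)$ should match, respectively: the genuine poles coming from the Euler class of $N^{\text{vir}}$ together with the denominator of the JK formula when $\langle\tilde{\delta},\rho\rangle\ge 0$; the weight directions carrying no deformation when $\langle\tilde{\delta},\rho\rangle=-1$; and the correction term in the virtual class when $\langle\tilde{\delta},\rho\rangle<-1$, where $D(\tilde{\delta},\rho)^{-1}$ specialises at $z=0$ to $\rho^{-1-\langle\tilde{\delta},\rho\rangle}$, exactly the factor appearing in Example~\ref{exampleToricqmaps1}.

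I expect the main obstacle to be ingredient (ii): the $\mathbb{C}^\ast$-equivariant bookkeeping of the obstruction theory over $\mathbb{P}^1$ on the fixed loci $F_{\tilde{\delta}_1,\tilde{\delta}_2}$ --- tracking the $\mathbb{C}^\ast$-weights in $R\pi_\ast\mathcal{O}(\langle\tilde{\delta},\rho_j\rangle)$, separating fixed and moving parts cleanly, and handling uniformly the coordinates $j$ with $\langle\tilde{\delta}_1,\rho_j\rangle<0$ or $\langle\tilde{\delta}_2,\rho_j\rangle<0$, which force $F_{\tilde{\delta}_1,\tilde{\delta}_2}$ to be a proper toric subvariety of $V/\!/T$ and contribute polynomial rather than polar factors --- so that everything collapses onto the $D$-functions. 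Alternatively one could avoid $\mathbb{C}^\ast$-localisation entirely and apply Jeffrey--Kirwan directly to the toric variety $Q(V/\!/T,\tilde{\delta})=L(\tilde{\delta})/\!/T$; then the sum over the splittings $\tilde{\delta}=\tilde{\delta}_1+\tilde{\delta}_2$ would instead emerge from expanding the resulting single iterated residue in the equivariant parameter $z$, essentially following Szenes--Vergne.
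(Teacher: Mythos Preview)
Your approach is correct and follows the same overall arc as the paper: $\mathbb{C}^\ast$-localise, push each contribution to $V/\!/T$ via $\text{ev}$, rewrite the result as the Kirwan image of an explicit rational function, and apply the $\mathbb{C}^\ast$-equivariant Jeffrey--Kirwan formula. The alternative you sketch at the end (JK directly on $L(\tilde{\delta})/\!/T$) is exactly the remark the paper makes after its proof.

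The one place where the paper is more efficient is precisely your ``main obstacle'' (ii). Rather than computing $e^{\mathbb{C}^\ast}(N^{\text{vir}}_{F_{\tilde{\delta}_1,\tilde{\delta}_2}})$ and the virtual class correction by hand from the obstruction complex, the paper invokes Proposition~\ref{EvaluatingVirtualCycle} to write
\[
\text{ev}_\ast\left(\frac{[F_{\tilde{\delta}_1,\tilde{\delta}_2}]^{\text{vir}}}{e^{\mathbb{C}^\ast}(N^{\text{vir}}_{F_{\tilde{\delta}_1,\tilde{\delta}_2}})}\right)=I^{V/\!/T}_{\tilde{\delta}_1}(z)\,I^{V/\!/T}_{\tilde{\delta}_2}(-z),
\]
and then cites the standard closed formula for the toric $I$-function (e.g.\ \cite[Theorem~8.2]{WebbExpository}), which already expresses each $I^{V/\!/T}_{\tilde{\delta}_i}$ in terms of the $D$-functions. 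Combined with Lemma~\ref{ChernWeilEvaluation} for the $\text{CW}^{\tilde{\delta}}(P)$ insertion, this yields directly
\[
\text{ev}_\ast\left(\text{CW}^{\tilde{\delta}}(P)\cap\frac{[F_{\tilde{\delta}_1,\tilde{\delta}_2}]^{\text{vir}}}{e^{\mathbb{C}^\ast}(N^{\text{vir}})}\right)=r_{\tilde{\delta}_2}\!\left(P\prod_{\rho\in\mathfrak{A}}D(\tilde{\delta},\rho)^{-1}(\rho+\langle\tilde{\delta}_2,\rho\rangle z)\right)\cap[V/\!/T],
\]
so the extra factor $\prod_\rho(\rho+\langle\tilde{\delta}_2,\rho\rangle z)$ cancels against the denominator of the JK formula and one lands on $Z_{\tilde{\delta}}$ with no case analysis. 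In other words, the weight-by-weight bookkeeping you anticipate is already packaged in the toric $I$-function; your direct computation would of course reprove that formula, but there is no need.
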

	\begin{proof}
		This is an immediate application of the Jeffrey-Kirwan localisation formula. We start by $\mathbb{C}^\ast$-localisation on $Q(V/\!/T, \tilde{\delta})$, so the quasimap invariant is a sum of localised contributions:
		\begin{align*}
			\int_{[Q(V/\!/T, \tilde{\delta})]^{\text{vir}}} \text{CW}^{\tilde{\delta}}(P) = \sum_{\tilde{\delta}_1 + \tilde{\delta}_2 = \tilde{\delta}}\int_{[F_{\tilde{\delta}_1, \tilde{\delta}_2}]^\text{vir}}\frac{\text{CW}^{\tilde{\delta}}\left(P\right)}{e^{\mathbb{C}^\ast}(N^\text{vir}_{F_{\tilde{\delta}_1, \tilde{\delta}_2}})}.
		\end{align*}
		Fixed $\tilde{\delta}_1, \tilde{\delta}_2 \in \chi(T)^\vee$ summing to $\tilde{\delta}$, we can compute the contribution of the corresponding fixed locus $F_{\tilde{\delta}_1, \tilde{\delta}_2}$ by pushing forward through the evaluation map $\text{ev}: F_{\tilde{\delta}_1, \tilde{\delta}_2} \rightarrow V/\!/T$ first. The pushed forward class satisfies
		\begin{align*}
			\text{ev}_\ast \left(\text{CW}^{\tilde{\delta}}(P) \cap \frac{[F_{\tilde{\delta}_1, \tilde{\delta}_2}]^\text{vir}}{e^{\mathbb{C}^\ast}(N^\text{vir}_{F_{\tilde{\delta}_1, \tilde{\delta}_2}})}\right) &= r_{\tilde{\delta}_2}(P)\cap I_{\tilde{\delta}_1}^{V/\!/T}(z) I_{\tilde{\delta}_2}^{V/\!/T}(-z)\\
			&= r_{\tilde{\delta}_2}\left(P \prod_{\rho \in \mathfrak{A}}D(\tilde{\delta}, \rho)^{-1} (\rho+ \langle \tilde{\delta}_2, \rho \rangle z) \right) \! \cap \! [V/\!/T]
		\end{align*}
		by Lemma \ref{ChernWeilEvaluation}, Proposition \ref{EvaluatingVirtualCycle} and well known formulae for the abelian $I$-function (see for example \cite[Theorem 8.2]{WebbExpository}). This shows that
		\begin{align*}
			\int_{[F_{\tilde{\delta}_1, \tilde{\delta}_2}]^\text{vir}}\frac{\text{CW}^{\tilde{\delta}}\left(P\right)}{e^{\mathbb{C}^\ast}(N^\text{vir}_{F_{\tilde{\delta}_1, \tilde{\delta}_2}})} = \text{JK}^{\mathfrak{A}}_{-z\tilde{\delta}_2}\left(Z_{\tilde{\delta}}(-,z)P\right)
		\end{align*}
		by Jeffrey-Kirwan localisation.
	\end{proof}
	\begin{rem}
		In the proof of the previous result we pushed forward the computation to $V/\!/T$ and applied Jeffrey-Kirwan localisation there. We could have followed a different approach that we outline here. As seen in Example \ref{exampleToricqmaps1}, given $\tilde{\delta}\in \chi(T)^\vee$ the abelian moduli space $Q(V/\!/T,\tilde{\delta})$ is itself of the form $L(\tilde{\delta})/\!/T$, where $L(\tilde{\delta})$ is another linear space. The Chern-Weil homomorphism coincides, by definition, with the Kirwan map (in the sense of \cite{OntaniJKforDT}) for the action of $T$ on $L(\tilde{\delta})$, so the $\mathbb{C}^\ast$-equivariant quasimap invariants can be computed by Jeffrey-Kirwan localisation giving the same formula above. This was the approach taken by Szenes and Vergne in \cite{SzenesVergne} for computing nonequivariant quasimap invariants for toric targets.
	\end{rem}
	\subsection{The general case}
	By applying the abelianisation formula of Theorem \ref{MainTheoremAbFor}, we obtain the following expression of quasimap invariants of $V/\!/G$ as Jeffrey-Kirwan residues:
	\begin{theorem}\label{MainTheoremSingleInvariants}
		Let $\delta \in \chi(G)^\vee$ and $P \in A^\ast_G(\text{pt})$. Consider the linear space $\chi(T \times \mathbb{C}^\ast)_\mathbb{C}^\vee \simeq \chi(T)_\mathbb{C}^\vee \times \mathbb{C}$ and let $z$ be the coordinate on the second factor $\mathbb{C}$. Given $\tilde{\delta} \in \chi(T)^\vee$, consider the rational function
		\begin{align*}
			Z_{\tilde{\delta}} = \prod_{\rho \in \mathfrak{A}}D(\tilde{\delta}, \rho)^{-1}\prod_{\alpha \in \Delta}D(\tilde{\delta}, \alpha)
		\end{align*}
		on this linear space. The $\mathbb{C}^\ast$-equivariant quasimap invariant of degree $\delta$
		\begin{align*}
			\int_{[Q(V/\!/G, \delta)]^{\text{vir}}} \text{CW}^\delta(P) \in \mathbb{C}[z]_z
		\end{align*} 
		evaluated at a generic $z \in \mathbb{C}$ coincides with the sum of Jeffrey-Kirwan residues
		\begin{align*}
			\frac{1}{\vert W \vert}\sum_{\tilde{\delta}\mapsto \delta} \sum_{\tilde{\delta}_1+ \tilde{\delta}_2= \tilde{\delta}} \text{JK}^{\mathfrak{A}}_{-z\tilde{\delta}_2} \left(Z_{\tilde{\delta}}(-, z) P \right).
		\end{align*}
		Here $Z_{\tilde{\delta}}(-,z)$ is the rational function on $\chi(T)_\mathbb{C}^\vee$ obtained by evaluating $Z_{\tilde{\delta}}$ at the chosen $z \in \mathbb{C}$.
	\end{theorem}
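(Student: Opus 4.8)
The plan is to stack the abelianisation formula of Theorem~\ref{MainTheoremAbFor} on top of the abelian Jeffrey--Kirwan formula of Proposition~\ref{JKAbelian}: essentially all the geometric content is already in place, and only a short manipulation of the functions $D(\tilde{\delta},-)$ remains. Concretely, Theorem~\ref{MainTheoremAbFor} gives
\begin{align*}
	\int_{[Q(V/\!/G, \delta)]^{\text{vir}}} \text{CW}^\delta(P) = \frac{1}{\vert W \vert} \sum_{\tilde{\delta}\mapsto \delta} \int_{[Q(V/\!/T, \tilde{\delta})]^{\text{vir}}} \text{CW}^{\tilde{\delta}}\!\left(P \prod_{\alpha \in \Delta} D(\tilde{\delta}, \alpha)\right),
\end{align*}
so it suffices to express each abelian invariant on the right-hand side as a sum of Jeffrey--Kirwan residues and to check that the two sides of the claimed formula then match.

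By the Remark following Theorem~\ref{MainTheoremAbFor}, the factor $\prod_{\alpha \in \Delta} D(\tilde{\delta}, \alpha)$ equals $\prod_{\alpha\in\Delta^+}(-1)^{\langle\tilde{\delta},\alpha\rangle+1}\,\alpha\,(\alpha+\langle\tilde{\delta},\alpha\rangle z)$, hence is a genuine polynomial class in $A^\ast_G(\text{pt})[z]\subseteq A^\ast_T(\text{pt})[z]$; so $P\prod_{\alpha\in\Delta}D(\tilde{\delta},\alpha)$ is an admissible insertion for Proposition~\ref{JKAbelian} (whose proof goes through verbatim for any polynomial insertion in $A^\ast_T(\text{pt})[z]=A^\ast_{T\times\mathbb{C}^\ast}(\text{pt})$, not merely the $z$-independent ones). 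Applying Proposition~\ref{JKAbelian} to the degree-$\tilde{\delta}$ invariant of $V/\!/T$ with this insertion, the abelian kernel $\prod_{\rho\in\mathfrak{A}}D(\tilde{\delta},\rho)^{-1}$ of that proposition gets multiplied by $P\prod_{\alpha\in\Delta}D(\tilde{\delta},\alpha)$, producing exactly the rational function
\begin{align*}
	\prod_{\rho\in\mathfrak{A}}D(\tilde{\delta},\rho)^{-1}\prod_{\alpha\in\Delta}D(\tilde{\delta},\alpha)\cdot P = Z_{\tilde{\delta}}(-,z)\,P
\end{align*}
appearing in the statement. Substituting
\begin{align*}
	\int_{[Q(V/\!/T, \tilde{\delta})]^{\text{vir}}} \text{CW}^{\tilde{\delta}}\!\left(P \prod_{\alpha \in \Delta} D(\tilde{\delta}, \alpha)\right) = \sum_{\tilde{\delta}_1+\tilde{\delta}_2=\tilde{\delta}} \text{JK}^{\mathfrak{A}}_{-z\tilde{\delta}_2}\!\left(Z_{\tilde{\delta}}(-,z)\,P\right)
\end{align*}
term by term into the abelianisation formula, and keeping the overall factor $\frac{1}{\vert W\vert}$, gives precisely the asserted double sum.

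The only point requiring a word of care is that a single value of $z$ must serve throughout: Proposition~\ref{JKAbelian}, and the equivariant Jeffrey--Kirwan theorem it rests on, are valid for $z$ outside an exceptional subset of $\mathbb{C}$ depending on $\mathfrak{A}$, the insertion and the chamber $-z\tilde{\delta}_2$, and this subset is proper and closed for each pair $(\tilde{\delta}_1,\tilde{\delta}_2)$. Since both sides of the identity lie in $\mathbb{C}[z]_z$, it is enough to verify it on a dense set of $z$, so one may take $z$ outside the (at most countable) union of these exceptional subsets over the finitely many fixed loci contributing to each degree. I do not expect a real obstacle beyond this bookkeeping — confirming that the inserted class is polynomial, checking that the abelian kernel $\prod_{\rho}D(\tilde{\delta},\rho)^{-1}$ times $\prod_{\alpha}D(\tilde{\delta},\alpha)$ reproduces the non-abelian $Z_{\tilde{\delta}}$, and arranging the genericity of $z$ — since all the substantive input is already contained in Theorem~\ref{MainTheoremAbFor} and Proposition~\ref{JKAbelian}.
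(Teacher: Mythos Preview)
Your proposal is correct and follows exactly the approach of the paper: the theorem is obtained by combining the abelianisation formula (Theorem~\ref{MainTheoremAbFor}) with the abelian Jeffrey--Kirwan formula (Proposition~\ref{JKAbelian}), noting that the root factor $\prod_{\alpha\in\Delta}D(\tilde{\delta},\alpha)$ is a polynomial insertion. Your additional remarks on the polynomiality of the insertion and the genericity of $z$ are appropriate bookkeeping that the paper leaves implicit.
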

	This settles the conjecture of Kim, Oh, Ueda and Yoshida \cite[Conjecture 10.10]{kimRMS} in the case of targets of the form $V/\!/G$. 
	\subsection{The non-equivariant limit}
	In the same way, as a corollary of the abelianisation formula we obtain an expression for the non-equivariant quasimap invariants of $V/\!/G$:
	\begin{cor}\label{nonequivariantInvariants}
		Let $\delta \in \chi(G)^\vee$ and $P \in A^\ast_G(\text{pt})$. The non-equivariant quasimap invariant of degree $\delta$
		\begin{align*}
			\int_{[Q(V/\!/G, \delta)]^{\text{vir}}} \text{CW}^\delta(P)
		\end{align*} 
		is the sum of Jeffrey-Kirwan residues
		\begin{align*}
			\frac{1}{\vert W \vert}\sum_{\tilde{\delta}\mapsto \delta} \text{JK}^\mathfrak{A}_{O} \left( \tilde{Z}_{\tilde{\delta}} P \right)
		\end{align*}
		where $\tilde{Z}_{\tilde{\delta}}$ is the rational function on $\chi(T)_\mathbb{C}^\vee$ defined by
		\begin{align*}
			\tilde{Z}_{\tilde{\delta}} = \prod_{\rho \in \mathfrak{A}}\rho^{-1-\langle\tilde{\delta},\rho\rangle} \prod_{\alpha \in \Delta^+} (-1)^{1+\langle \tilde{\delta}, \alpha\rangle} \alpha^2.
		\end{align*}
	\end{cor}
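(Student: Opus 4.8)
The plan is to deduce the formula from the abelianisation formula of Theorem~\ref{MainTheoremAbFor} by setting the equivariant parameter to $z=0$, and then to evaluate the resulting non-equivariant quasimap invariants of the toric target $V/\!/T$ by Jeffrey--Kirwan localisation in the spirit of Szenes and Vergne. The first observation is that Theorem~\ref{MainTheoremAbFor} may be evaluated at $z=0$ term by term: the insertion $P\prod_{\alpha\in\Delta}D(\tilde{\delta},\alpha)=P\prod_{\alpha\in\Delta^+}(-1)^{\langle\tilde{\delta},\alpha\rangle+1}\alpha(\alpha+\langle\tilde{\delta},\alpha\rangle z)$ lies in $A^\ast_G(\text{pt})[z]$, and since $Q(V/\!/T,\tilde{\delta})$ is proper each term $\int_{[Q(V/\!/T,\tilde{\delta})]^\text{vir}}\text{CW}^{\tilde{\delta}}(P\prod_{\alpha}D(\tilde{\delta},\alpha))$, like the left-hand side $\int_{[Q(V/\!/G,\delta)]^\text{vir}}\text{CW}^\delta(P)$, is a polynomial in $z$. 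A direct inspection of (\ref{Dfunction}) gives $D(\tilde{\delta},w)|_{z=0}=w^{\,1+\langle\tilde{\delta},w\rangle}$ for every $w\in\chi(T)$, so pairing opposite roots yields $\prod_{\alpha\in\Delta}D(\tilde{\delta},\alpha)|_{z=0}=\prod_{\alpha\in\Delta^+}(-1)^{1+\langle\tilde{\delta},\alpha\rangle}\alpha^2$; hence the non-equivariant invariant of $V/\!/G$ equals $\tfrac{1}{|W|}\sum_{\tilde{\delta}\mapsto\delta}\int_{[Q(V/\!/T,\tilde{\delta})]^\text{vir}}\text{CW}^{\tilde{\delta}}(P\prod_{\alpha\in\Delta^+}(-1)^{1+\langle\tilde{\delta},\alpha\rangle}\alpha^2)$, and it remains to compute these non-equivariant abelian invariants.

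For $Q\in A^\ast_T(\text{pt})$ I would establish the non-equivariant toric formula $\int_{[Q(V/\!/T,\tilde{\delta})]^\text{vir}}\text{CW}^{\tilde{\delta}}(Q)=\text{JK}^{\mathfrak{A}}_{O}(Q\prod_{\rho\in\mathfrak{A}}\rho^{-1-\langle\tilde{\delta},\rho\rangle})$, using the description of $Q(V/\!/T,\tilde{\delta})$ as the toric variety $L(\tilde{\delta})/\!/T$ from Example~\ref{exampleToricqmaps1}. Under this identification $\text{CW}^{\tilde{\delta}}$ is the Kirwan map for the $T$-action on $L(\tilde{\delta})$, the weights of $L(\tilde{\delta})$ are the $\rho_j$ with $\langle\tilde{\delta},\rho_j\rangle\ge0$ each appearing with multiplicity $1+\langle\tilde{\delta},\rho_j\rangle$, and the virtual class contributes the factor $\prod_{j:\langle\tilde{\delta},\rho_j\rangle<0}\rho_j^{-1-\langle\tilde{\delta},\rho_j\rangle}$. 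Applying the Jeffrey--Kirwan residue formula for $L(\tilde{\delta})/\!/T$ in the Brion--Szenes--Vergne form and cancelling the weights of $L(\tilde{\delta})$ against the virtual-class factor removes the multiplicities and leaves the stated residue; this is the non-equivariant formula of Szenes and Vergne recalled in Appendix~\ref{sectionSzenesVergne}, and it can equally be obtained as the $z\to0$ limit of the equivariant Proposition~\ref{JKAbelian}. Finally, substituting $Q=P\prod_{\alpha\in\Delta^+}(-1)^{1+\langle\tilde{\delta},\alpha\rangle}\alpha^2$ turns the argument of the residue into $P\prod_{\rho\in\mathfrak{A}}\rho^{-1-\langle\tilde{\delta},\rho\rangle}\prod_{\alpha\in\Delta^+}(-1)^{1+\langle\tilde{\delta},\alpha\rangle}\alpha^2=P\,\tilde{Z}_{\tilde{\delta}}$, which combined with the first step gives the claimed identity.

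The main obstacle is the second step: one must check that the Jeffrey--Kirwan residue produced by the toric localisation, a priori taken relative to the chamber of the stability parameter for the large torus acting on $L(\tilde{\delta})$ and relative to the multiset of weights of $L(\tilde{\delta})$ (counted with multiplicity), coincides with the residue $\text{JK}^{\mathfrak{A}}_{O}$ at the origin appearing in the statement. This rests on two facts: $P\,\tilde{Z}_{\tilde{\delta}}$ is homogeneous with poles only along the central hyperplanes $\{\rho=0\}$ for $\rho\in\mathfrak{A}$ with $\langle\tilde{\delta},\rho\rangle\ge0$, so enlarging the arrangement to all of $\mathfrak{A}$ introduces no new poles and leaves the residue unchanged; and the degree of $P\,\tilde{Z}_{\tilde{\delta}}$, governed by the virtual dimension of the quasimap space, is exactly the one making the Jeffrey--Kirwan residue independent of the chamber and hence equal to the residue at the origin. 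This bookkeeping is carried out by Szenes and Vergne, so at this point it suffices to invoke their result as recalled in the appendix.
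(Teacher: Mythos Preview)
Your proposal is correct and follows essentially the same route as the paper: apply the abelianisation formula at $z=0$ to reduce to non-equivariant abelian invariants, then use the toric description $Q(V/\!/T,\tilde{\delta})\simeq L(\tilde{\delta})/\!/T$ from Example~\ref{exampleToricqmaps1} together with the identification of $\text{CW}^{\tilde{\delta}}$ with the Kirwan map and the explicit virtual class, and conclude by Jeffrey--Kirwan localisation. Your third paragraph makes explicit a point the paper leaves implicit, namely that the residue produced by localisation on $L(\tilde{\delta})/\!/T$ really agrees with $\text{JK}^{\mathfrak{A}}_{O}$; this is indeed absorbed into the reference to Szenes--Vergne, so no gap remains.
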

	\begin{proof}
		First we apply the abelianisation formula and we reduce to computing quasimap invariants of $V/\!/T$:
		\begin{align*}
			\int_{[Q(V/\!/G, \delta)]^{\text{vir}}} \text{CW}^\delta(P) = \frac{1}{\vert W \vert} \sum_{\tilde{\delta}\mapsto \delta} \int_{[Q(V/\!/T, \tilde{\delta})]^{\text{vir}}} \text{CW}^{\tilde{\delta}}\left(P \prod_{\alpha \in \Delta^+} (-1)^{1+\langle \tilde{\delta}, \alpha\rangle} \alpha^2 \right).
		\end{align*}
		In Example \ref{exampleToricqmaps1} we showed how the moduli space $Q(V/\!/T, \tilde{\delta})$ is isomorphic to the toric quotient $L(\tilde{\delta})/\!/T$ (in particular, it's irreducible). Its virtual fundamental class is the class
		\begin{align*}
			[Q(V/\!/T, \tilde{\delta})]^\text{vir} = \text{CW}^{\tilde{\delta}}\left(\prod_{\substack{\rho \in \mathfrak{A} \,\,\vert \,\, \langle\tilde{\delta}, \rho\rangle < 0}}\rho^{-1-\langle\tilde{\delta}, \rho\rangle}\right) \cap [Q(V/\!/T, \tilde{\delta})].
		\end{align*}
		and therefore the abelian quasimap invariant appearing on the right-hand side of the abelianisation formula can be computed as
		\begin{align*}
			\int_{Q(V/\!/T, \tilde{\delta})} \text{CW}^{\tilde{\delta}}\left( P \prod_{\alpha \in \Delta^+} (-1)^{1+\langle \tilde{\delta}, \alpha\rangle} \alpha^2 \prod_{\substack{\rho \in \mathfrak{A} \,\,\vert \,\, \langle\tilde{\delta}, \rho\rangle < 0}}\rho^{-1-\langle\tilde{\delta}, \rho\rangle} \right).
		\end{align*}
		The Chern-Weil homomorphism $\text{CW}^{\tilde{\delta}} : A^\ast_T(\text{pt}) \rightarrow A^\ast(Q(V/\!/T, \tilde{\delta}))$ coincides with the Kirwan map for the action of $T$ on $L(\tilde{\delta})$, so by Jeffrey-Kirwan localisation the integral above coincides with the claimed Jeffrey-Kirwan residue.
	\end{proof}
	\begin{rem}
		Residue formulae of this kind have been useful for matching, via direct computation, the two sides of mirror symmetry \cite{SzenesVergne, kimRMS, OntaniStoppa} in particular situations.
	\end{rem}
	\section{The Vafa-Intriligator formula}
	Consider a representation $V$ of a reductive group $G$, with maximal subtorus $T$, together with a linearisation corresponding to a character $\xi \in \chi(G)$. Assume that the $G$ and $T$-actions on the respective semistable loci are both free and that the quotients are proper. In this section we will mainly be interested in the formal sum
	\begin{align}\label{QuasimapSeries}
		\langle P \rangle^G(q) := \vert W \vert \sum_{\delta \in \chi(G)^\vee} q^\delta \int_{[Q(V/\!/G, \delta)]^{\text{vir}}} \text{CW}^{\delta}(P),
	\end{align}
	where $\vert W \vert$ denotes the order of the Weyl group of $G$.
	\begin{definition}
		Given a group $G$, we will denote with $\widecheck{G}$ the algebraic torus $\chi(G)_\mathbb{C}/\chi(G)$, which we call \textit{dual group of} $G$. 
	\end{definition}
	Given $q=[\psi] \in \widecheck{G}$ and $\delta \in \chi(G)^\vee$, setting
	\begin{align*}
		q^\delta := e^{2\pi i\langle \delta, \psi\rangle}
	\end{align*}
	we can think of the sum (\ref{QuasimapSeries}) as a function $\langle P \rangle^G : \widecheck{G} \dashrightarrow \mathbb{C}$ defined on the domain of convergence.
	\begin{definition}
		Given a basis $\lambda_1, \dots, \lambda_r$ of $\chi(G)^\vee$, the corresponding functions $q_1:= q^{\lambda_1}, \dots, q_r:= q^{\lambda_r}$ on $\widecheck{G}\simeq (\mathbb{C}^\ast)^r$ defined as above by
		\begin{align}\label{coordinatesOnDualGroup}
			q_i : \widecheck{G} \rightarrow \mathbb{C}^\ast \quad : \quad q_i[\psi]:= e^{2\pi i\langle \lambda_i, \psi\rangle}.  
		\end{align}
		define coordinates on $\widecheck{G}$. We call them \textit{coordinates associated to} $\lambda_1, \dots, \lambda_r$.
	\end{definition}
	Here we address the following questions:
	\begin{enumerate}
		\item\label{question1} Does $\langle P \rangle^G$ converge at any point of $\widecheck{G}$?
		\item\label{question2} If yes, can we find a closed expression for the function it converges to?
	\end{enumerate}
	\subsection{Abelianisation and convergence}
	Let $\Delta\subset \chi(T)$ be the set of roots of $G$, $\Delta^+ \subset \Delta$ a choice of positive roots and consider the involution
	\begin{align}\label{sigmaInvolution}
		\sigma : \widecheck{T} \rightarrow \widecheck{T} \quad : \quad \sigma(q) := q+ \left[\frac{1}{2}\sum_{\alpha \in \Delta^+}\alpha\right].
	\end{align}
	\begin{rem}
		This involution doesn't depend on the choice of the positive roots. Indeed, assume we replace $\alpha \in \Delta^+$ with its opposite $-\alpha$. Then $\sigma$ changes by
		\begin{align*}
			\sigma_{\text{new}}(q) = \sigma_{\text{old}}(q) - \alpha
		\end{align*}
		and these two elements are the same in $\widecheck{T}:= \chi(T)_\mathbb{C}/\chi(T)$.
	\end{rem}
	\begin{rem}
		The rational character of $T$ given by the half-sum of the positive roots $\frac{1}{2}\sum_{\alpha \in \Delta^+}\alpha$ plays an important role in representation theory and it is called the \textit{Weyl vector} of $G$. This is not only a character of the maximal subtorus $T$ but, being invariant under the action of the Weyl group $W$ of $G$, it extends to a character of $G$. In particular, this shows that $\widecheck{G}$ is an invariant subtorus of the involution $\sigma$.
	\end{rem}
	Defined the function $\langle P \rangle^T : \widecheck{T} \dashrightarrow \mathbb{C}$ as
	\begin{align}\label{QuasimapSeriesAb}
		\langle P \rangle^T(q) := \sum_{\lambda \in \chi(T)^\vee} q^\lambda \int_{[Q(V/\!/T, \lambda)]^{\text{vir}}} \text{CW}^{\lambda}\big(P \prod_{\alpha \in \Delta}\alpha\big),
	\end{align}
	by the abelianisation formula we have the following
	\begin{theorem}\label{AbelianisationSeries}
		Consider the inclusion $\widecheck{G} \hookrightarrow \widecheck{T}$. The equality
		\begin{align*}
			\langle P \rangle^G(q) = \langle P \rangle^T(\sigma(q)).
		\end{align*}
		holds true for every $q \in \widecheck{G}$ so that one of the two sums converges.
	\end{theorem}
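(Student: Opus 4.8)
The plan is to read the identity directly off the non-equivariant version of the abelianisation formula (Theorem~\ref{MainTheoremAbFor} with the equivariant parameter set to $0$, in the form recorded in the proof of Corollary~\ref{nonequivariantInvariants}), the only real content being a matching of indices between the two series together with the sign bookkeeping that produces the shift $\sigma$. First I would rewrite the left-hand side. The non-equivariant abelianisation formula reads
\[
|W|\int_{[Q(V/\!/G,\delta)]^{\text{vir}}}\text{CW}^\delta(P)=\sum_{\tilde\delta\mapsto\delta}\int_{[Q(V/\!/T,\tilde\delta)]^{\text{vir}}}\text{CW}^{\tilde\delta}\Bigl(P\prod_{\alpha\in\Delta^+}(-1)^{1+\langle\tilde\delta,\alpha\rangle}\alpha^2\Bigr).
\]
Multiplying by $q^\delta$ and summing over $\delta\in\chi(G)^\vee$, the factor $|W|$ cancels the one in the definition~(\ref{QuasimapSeries}) of $\langle P\rangle^G$, and the double sum over the pairs $(\delta,\tilde\delta)$ with $\tilde\delta\mapsto\delta$ collapses into a single sum over $\tilde\delta\in\chi(T)^\vee$. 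What makes the collapse work is that, for $q\in\widecheck G\hookrightarrow\widecheck T$, one has $q^{\tilde\delta}=q^{\delta}$ whenever $\tilde\delta_{\vert\chi(G)}=\delta$: writing $q=[\psi]$ with $\psi\in\chi(G)_{\mathbb C}\subseteq\chi(T)_{\mathbb C}$, the pairing $\langle\tilde\delta,\psi\rangle$ depends on $\tilde\delta$ only through its restriction to $\chi(G)$. Hence
\[
\langle P\rangle^G(q)=\sum_{\tilde\delta\in\chi(T)^\vee}q^{\tilde\delta}\int_{[Q(V/\!/T,\tilde\delta)]^{\text{vir}}}\text{CW}^{\tilde\delta}\Bigl(P\prod_{\alpha\in\Delta^+}(-1)^{1+\langle\tilde\delta,\alpha\rangle}\alpha^2\Bigr).
\]

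Next I would rewrite the right-hand side $\langle P\rangle^T(\sigma(q))$. Since the roots come in opposite pairs, $\prod_{\alpha\in\Delta}\alpha=(-1)^{|\Delta^+|}\prod_{\alpha\in\Delta^+}\alpha^2$, and since the Weyl vector $\tfrac12\sum_{\alpha\in\Delta^+}\alpha$ is a rational (in particular real) character, for every $\lambda\in\chi(T)^\vee$ one gets
\[
\sigma(q)^\lambda=q^\lambda\,e^{\pi i\sum_{\alpha\in\Delta^+}\langle\lambda,\alpha\rangle}=q^\lambda\prod_{\alpha\in\Delta^+}(-1)^{\langle\lambda,\alpha\rangle}.
\]
Feeding these two identities into the definition~(\ref{QuasimapSeriesAb}) of $\langle P\rangle^T$, the sign $\prod_{\alpha\in\Delta^+}(-1)^{\langle\lambda,\alpha\rangle}$ coming from $\sigma$ and the sign $(-1)^{|\Delta^+|}$ coming from $\prod_{\alpha\in\Delta}\alpha$ combine into $\prod_{\alpha\in\Delta^+}(-1)^{1+\langle\lambda,\alpha\rangle}$, so that, after renaming $\lambda=\tilde\delta$, the $\tilde\delta$-th term of $\langle P\rangle^T(\sigma(q))$ is literally the $\tilde\delta$-th term of the series just obtained for $\langle P\rangle^G(q)$. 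Thus the two sides agree as formal series supported on $\chi(T)^\vee$ and evaluated on $\widecheck G$.

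It remains to turn the formal identity into one of functions, which is where convergence enters. Because $\sigma$ shifts the logarithmic coordinates by a real vector, it preserves all monomial absolute values, $|\sigma(q)^\lambda|=|q^\lambda|$; hence, up to signs, the $\tilde\delta$-th terms of the two series have equal absolute value, and reordering the sum over the pairs $(\delta,\tilde\delta)$ is legitimate as soon as one of the two series converges absolutely. Since the domain of convergence of such a multivariate (Laurent-type) series is a logarithmically convex Reinhardt domain, on which convergence is absolute, the manipulation of the previous two paragraphs is valid there, and the two functions coincide on it. I do not expect a genuine obstacle in this proof; the points that require care are precisely the identification $q^{\tilde\delta}=q^{\delta}$ on $\widecheck G$, the sign bookkeeping that converts the $\sigma$-twist into the sign appearing in the abelianisation formula, and the appeal to absolute convergence to rearrange the double sum.
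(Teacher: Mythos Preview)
Your proposal is correct and follows the same route as the paper: apply the non-equivariant abelianisation formula of Theorem~\ref{MainTheoremAbFor} term by term and collapse the double sum into a single sum over $\chi(T)^\vee$. The paper's proof is a one-line reference to Theorem~\ref{MainTheoremAbFor}; you have simply written out the sign bookkeeping (the identification $\prod_{\alpha\in\Delta}\alpha=(-1)^{|\Delta^+|}\prod_{\alpha\in\Delta^+}\alpha^2$ and the computation of $\sigma(q)^\lambda$) and the convergence argument that the paper leaves implicit.
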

	\begin{proof}
		This is an immediate application of the abelianisation formula of Theorem \ref{MainTheoremAbFor} which allows to write (\ref{QuasimapSeries}) in terms of invariants of $V/\!/T$.
	\end{proof}
	In order to show that $\langle P \rangle^G$  converges somewhere on $\widecheck{G}$, thanks to the result above we can study the intersection of the convergence set of the formal sum $\langle P \rangle^T$ on $\widecheck{T}$ with the subtorus $\widecheck{G}\subseteq \widecheck{T}$. The semi-positivity of the triple $(T,V,\xi)$, in the sense of \cite[Section 1.4]{KimCiocanWallCrossing}, is sufficient for having convergence results.
	\begin{definition}
		Let $\text{NE}(\xi) \subseteq \chi(T)_\mathbb{R}^\vee$ be the cone generated by degrees of stable quasimaps (from curves of any genus). The triple $(V,T,\xi)$ is called \textit{semi-positive} if
		\begin{align*}
			\langle \tilde{\delta}, \kappa \rangle \geq 0 \qquad \forall \tilde{\delta} \in \text{NE}(\xi)
		\end{align*}
		where $\kappa := \sum_{\rho \in \mathfrak{A}} \rho \in \chi(T)$ is the \textit{anticanonical character}.
	\end{definition}
	\begin{rem}
		In particular, if $\tilde{\delta}$ is not a $\xi$-effective class, then the moduli space $Q(V/\!/T,\tilde{\delta})$ is nonempty and $\tilde{\delta}$ doesn't contribute to the power series $\langle P \rangle^T$.
	\end{rem}
	Let $\mathfrak{c} \subset \chi(T)_\mathbb{R}$ the be the open chamber of the momentum cone for the action $T \curvearrowright V$ which the stability $\xi$ belongs to.
	By \cite[Theorem 1.7]{Shoemaker} we see that $\text{NE}(\xi) = \mathfrak{c}^\vee$ and therefore
	\begin{lem}
		The triple $(T,V,\xi)$ is \textit{semi-positive} if $\kappa \in \overline{\mathfrak{c}}$, or in other words
		\begin{align*}
			\langle \tilde{\delta}, \kappa \rangle \geq 0 \qquad \forall \tilde{\delta} \in \mathfrak{c}^\vee.
		\end{align*}
	\end{lem}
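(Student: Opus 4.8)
The plan is to deduce the statement purely from the cone duality just set up, using no further geometric input beyond Shoemaker's identification $\text{NE}(\xi) = \mathfrak{c}^\vee$. First I would unwind the definition of semi-positivity: the triple $(T,V,\xi)$ is semi-positive exactly when $\langle \tilde{\delta}, \kappa\rangle \geq 0$ for all $\tilde{\delta} \in \text{NE}(\xi)$, and substituting $\text{NE}(\xi) = \mathfrak{c}^\vee$ turns this into the condition $\langle \tilde{\delta}, \kappa\rangle \geq 0$ for every $\tilde{\delta} \in \mathfrak{c}^\vee$. By the very definition of the dual cone, this says precisely that $\kappa$ lies in the double dual $(\mathfrak{c}^\vee)^\vee$, all duals being taken with respect to the natural perfect pairing between $\chi(T)_\mathbb{R}$ and $\chi(T)_\mathbb{R}^\vee$.

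It then remains to identify $(\mathfrak{c}^\vee)^\vee$ with $\overline{\mathfrak{c}}$. I would argue as follows: $\mathfrak{c}$ is an open convex cone, being a chamber of the momentum cone of $T \curvearrowright V$, so its closure $\overline{\mathfrak{c}}$ is a closed convex cone and has the same dual cone as $\mathfrak{c}$, that is, $\mathfrak{c}^\vee = \overline{\mathfrak{c}}^\vee$. The bipolar theorem for closed convex cones in a finite-dimensional space then gives $(\overline{\mathfrak{c}}^\vee)^\vee = \overline{\mathfrak{c}}$, hence $(\mathfrak{c}^\vee)^\vee = \overline{\mathfrak{c}}$. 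Combining this with the previous paragraph yields the equivalence: the triple is semi-positive if and only if $\kappa \in \overline{\mathfrak{c}}$, which is the assertion of the lemma.

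I do not anticipate a real obstacle here; the only care needed is bookkeeping about the ambient spaces and the convexity of the chamber $\mathfrak{c}$, so that the bipolar theorem applies cleanly. Should one wish to avoid citing that theorem, the inclusion $\overline{\mathfrak{c}} \subseteq (\mathfrak{c}^\vee)^\vee$ is immediate from the definitions, while the reverse inclusion $(\mathfrak{c}^\vee)^\vee \subseteq \overline{\mathfrak{c}}$ follows from a standard separating-hyperplane argument: a point $v \notin \overline{\mathfrak{c}}$ can be separated from the closed convex cone $\overline{\mathfrak{c}}$ by a linear functional $\ell$ with $\ell \geq 0$ on $\overline{\mathfrak{c}}$ and $\ell(v) < 0$, and such an $\ell$ lies in $\mathfrak{c}^\vee$ and witnesses $v \notin (\mathfrak{c}^\vee)^\vee$.
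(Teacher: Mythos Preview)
Your proposal is correct and follows exactly the same route as the paper: the lemma is stated immediately after invoking Shoemaker's identification $\text{NE}(\xi) = \mathfrak{c}^\vee$, and the paper treats it as a direct consequence of substituting this equality into the definition of semi-positivity. You have simply made explicit the cone-duality step $(\mathfrak{c}^\vee)^\vee = \overline{\mathfrak{c}}$ that the paper leaves implicit in the phrase ``or in other words''.
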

	\subsubsection{The positive case}
	Assume that $(V,T,\xi)$ is \textit{positive}, namely $\kappa \in \mathfrak{c}$. This means that for every $\delta \in \mathfrak{c}^\vee$ the anticanonical character pairs positively with it: $\langle \tilde{\delta}, \kappa \rangle >0$. We will need the following elementary lemma:
	\begin{lem}\label{positive_cone_lemma}
		Let $\kappa \in \mathfrak{c}$ and let $d>0$. There is only a finite number of $\tilde{\delta} \in \mathfrak{c}^\vee \cap \chi(T)^\vee$ such that $\langle \kappa, \delta \rangle\leq d$.
	\end{lem}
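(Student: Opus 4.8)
The plan is to reduce the statement to a purely convex-geometric fact: the subset of $\chi(T)^\vee_\mathbb{R}$ cut out by the two conditions $\tilde\delta\in\mathfrak{c}^\vee$ and $\langle\kappa,\tilde\delta\rangle\le d$ is bounded. Once this is established the lemma follows immediately, since $\chi(T)^\vee$ is a lattice and a bounded region of a real vector space contains only finitely many lattice points. (Here I read the pairing in the statement as $\langle\tilde\delta,\kappa\rangle$, the same number that appears in the definition of semi-positivity.)

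The one piece of input I would use is that $\kappa$ lies in the \emph{open} chamber $\mathfrak{c}$; since $\mathfrak{c}$ is a nonempty open cone it is full-dimensional, so $\mathfrak{c}^\vee$ is a pointed closed cone and the estimate below is meaningful. Fix an auxiliary inner product on $\chi(T)_\mathbb{R}$, with induced norm $|\cdot|$ and induced identification $\chi(T)^\vee_\mathbb{R}\cong\chi(T)_\mathbb{R}$. Because $\mathfrak{c}$ is open and contains $\kappa$, there is $\varepsilon>0$ with the closed $\varepsilon$-ball around $\kappa$ contained in $\mathfrak{c}$. Now for any $\tilde\delta\in\mathfrak{c}^\vee\setminus\{0\}$ set $u:=\tilde\delta/|\tilde\delta|$, viewed in $\chi(T)_\mathbb{R}$ via the identification; then $\kappa-\varepsilon u\in\mathfrak{c}$, so by definition of the dual cone $\langle\tilde\delta,\kappa-\varepsilon u\rangle\ge0$, and hence
\[
\langle\tilde\delta,\kappa\rangle \;=\; \langle\tilde\delta,\kappa-\varepsilon u\rangle + \varepsilon\langle\tilde\delta,u\rangle \;\ge\; \varepsilon\,|\tilde\delta|.
\]
Therefore every $\tilde\delta\in\mathfrak{c}^\vee$ with $\langle\kappa,\tilde\delta\rangle\le d$ satisfies $|\tilde\delta|\le d/\varepsilon$, so all such $\tilde\delta$ lie in a fixed ball; intersecting with $\chi(T)^\vee$ gives a finite set.

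I do not expect any genuine obstacle here: this is a standard fact that a rational polyhedral cone admits a compact "base" with respect to a linear functional strictly positive on its interior, and the slight subtlety — that $\mathfrak{c}$ being a nonempty open cone forces it to be full-dimensional, which is exactly what lets one extract $\varepsilon$ from "$\kappa\in\mathfrak{c}$" — costs only a sentence. The argument also makes transparent where positivity (as opposed to mere semi-positivity, $\kappa\in\overline{\mathfrak{c}}$) is used: if $\kappa$ were only on the boundary, the ball around it would leave $\mathfrak{c}$ and the estimate $\langle\tilde\delta,\kappa\rangle\ge\varepsilon|\tilde\delta|$ would fail for $\tilde\delta$ in a face of $\mathfrak{c}^\vee$.
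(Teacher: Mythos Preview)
Your argument is correct and takes a genuinely different route from the paper. The paper uses the rational polyhedral structure of $\mathfrak{c}^\vee$: it picks a finite set of generators $\tilde\delta_1,\dots,\tilde\delta_l$ of the dual cone, observes that each pairs strictly positively with $\kappa$ (because $\kappa$ lies in the open chamber), and then bounds the nonnegative coefficients in any representation $\tilde\delta=\sum_j\alpha_j\tilde\delta_j$ by $\alpha_j\le d/m_j$, forcing $\tilde\delta$ into a bounded set. Your approach instead fixes an auxiliary inner product, uses an $\varepsilon$-ball around $\kappa$ inside $\mathfrak{c}$, and extracts the uniform lower bound $\langle\tilde\delta,\kappa\rangle\ge\varepsilon|\tilde\delta|$ directly. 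Your version is slightly more robust in that it never invokes the polyhedrality of $\mathfrak{c}$, only that it is open; the paper's version, on the other hand, stays entirely within the combinatorial language already in play and avoids introducing an auxiliary metric. Both are short and standard, and your closing remark pinpointing exactly where strict positivity ($\kappa\in\mathfrak{c}$ rather than $\kappa\in\overline{\mathfrak{c}}$) enters is a nice touch that the paper's proof leaves implicit.
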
 
	\begin{proof}
		Since $\overline{\mathfrak{c}}$ is rational polyhedral then $\mathfrak{c}^\vee$ is rational polyhedral, hence it admits a finite number of generators $\tilde{\delta}_1, \dots, \tilde{\delta}_l$. For each of them we set $m_i:=\langle \tilde{\delta}_i, \kappa \rangle > 0$ (since $\kappa$ is in the open cone $\mathfrak{c}$) and therefore the general element of $\mathfrak{c}^\vee$ 
		\begin{align*}
			\tilde{\delta}:= \sum_{j=1}^l \alpha_j \tilde{\delta}_j
		\end{align*}
		pairs with $\kappa$ as
		\begin{align*}
			\langle\tilde{\delta}, \kappa \rangle:= \sum_{j=1}^l \alpha_j m_j,
		\end{align*}
		with $\alpha_j >0$. In order for this pairing to be less then or equal to $d$ we need $\alpha_j \leq \frac{d_j}{m_j}$ for every $j$, hence the set of elements $\tilde{\delta} \in \mathfrak{c}^\vee$ pairing with $\kappa$ to something less or equal to $d$ is bounded. In particular, intersecting it with the lattice $\chi(T)^\vee$ gives only a finite number of points.
	\end{proof}
	This implies the following convergence result:
	\begin{pro}
		Let $(V,T,\xi)$ be positive, or equivalently $\kappa \in \mathfrak{c}$. Then, for every $P \in A_T^\ast(\text{pt})$, the power series $\langle P \rangle^T$ and $\langle P \rangle^G$ have only finitely many nonzero terms.
	\end{pro}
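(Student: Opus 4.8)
The plan is to show that positivity forces $\langle\lambda,\kappa\rangle$ to be bounded above on the degrees $\lambda$ (resp.\ $\delta$) that can possibly contribute a nonzero term, and then to invoke Lemma~\ref{positive_cone_lemma} to conclude finiteness.

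I would first treat $\langle P\rangle^T$. The non-equivariant Chern--Weil homomorphism $\text{CW}^\lambda\colon A^\ast_T(\text{pt})\to A^\ast(Q(V/\!/T,\lambda))$ preserves the cohomological degree, and integration against $[Q(V/\!/T,\lambda)]^{\text{vir}}$ — a cycle of dimension equal to the virtual dimension of $Q(V/\!/T,\lambda)$ — annihilates every class whose degree is different from that virtual dimension. By the explicit toric description of Example~\ref{exampleToricqmaps1}, the virtual dimension of $Q(V/\!/T,\lambda)$ equals $\dim(V/\!/T)+\langle\lambda,\kappa\rangle$, since $\kappa=\sum_{\rho\in\mathfrak{A}}\rho$. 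Decomposing $P=\sum_k P^{(k)}$ into homogeneous components, the class $P^{(k)}\prod_{\alpha\in\Delta}\alpha$ has degree $k+|\Delta|$, so the $\lambda$-term of $\langle P\rangle^T$ can be nonzero only when $\langle\lambda,\kappa\rangle=k+|\Delta|-\dim(V/\!/T)$ for some $k$ with $P^{(k)}\neq 0$, and hence only when $\langle\lambda,\kappa\rangle\le d$ for $d:=\max\bigl(1,\ \deg P+|\Delta|-\dim(V/\!/T)\bigr)$. Moreover, since $Q(V/\!/T,\lambda)$ is empty unless $\lambda$ is $\xi$-effective and $\text{NE}(\xi)=\mathfrak{c}^\vee$, the term also vanishes unless $\lambda\in\mathfrak{c}^\vee\cap\chi(T)^\vee$. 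As $\kappa\in\mathfrak{c}$, Lemma~\ref{positive_cone_lemma} shows that only finitely many such $\lambda$ exist, which proves the claim for $\langle P\rangle^T$.

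For $\langle P\rangle^G$ I would apply the abelianisation formula of Theorem~\ref{MainTheoremAbFor} with the equivariant parameter $z$ set to $0$, so that the coefficient of $q^\delta$ in $\langle P\rangle^G$ equals $\sum_{\tilde\delta\mapsto\delta}\int_{[Q(V/\!/T,\tilde\delta)]^{\text{vir}}}\text{CW}^{\tilde\delta}\bigl(P\prod_{\alpha\in\Delta^+}(-1)^{1+\langle\tilde\delta,\alpha\rangle}\alpha^2\bigr)$. The factor $\prod_{\alpha\in\Delta^+}\alpha^2$ is homogeneous of degree $2|\Delta^+|=|\Delta|$, so the identical degree count shows that each summand vanishes unless $\tilde\delta$ is $\xi$-effective with $\langle\tilde\delta,\kappa\rangle\le d$. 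By the previous paragraph the set $E$ of such $\tilde\delta$ is finite, hence the coefficient of $q^\delta$ in $\langle P\rangle^G$ can be nonzero only when $\delta$ lies in the finite image of $E$ under the restriction map $\chi(T)^\vee\to\chi(G)^\vee$. This proves that $\langle P\rangle^G$ also has finitely many nonzero terms.

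None of these steps is a serious obstacle: the only point requiring a little care is the dimension count, i.e.\ recording that $\text{CW}^\lambda$ is degree-preserving and that the virtual dimension of $Q(V/\!/T,\lambda)$ is $\dim(V/\!/T)+\langle\lambda,\kappa\rangle$ (read off from Example~\ref{exampleToricqmaps1}); the bookkeeping relating $\tilde\delta$ to $\delta$ is automatic because non-effective degrees give empty moduli spaces.
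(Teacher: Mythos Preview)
Your proposal is correct and follows essentially the same approach as the paper: a virtual-dimension count for the toric quasimap spaces shows that only $\lambda\in\mathfrak{c}^\vee\cap\chi(T)^\vee$ with $\langle\lambda,\kappa\rangle$ bounded above can contribute, Lemma~\ref{positive_cone_lemma} then gives finiteness for $\langle P\rangle^T$, and the statement for $\langle P\rangle^G$ is deduced via the abelianisation formula. Your write-up is simply more explicit than the paper's (you spell out the degree of the insertion including the root factor $\prod_{\alpha\in\Delta}\alpha$, and you unwind the abelianisation step for $\langle P\rangle^G$ rather than just citing it), but the underlying argument is the same.
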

	\begin{proof}
		We start by noting that
		\begin{align*}
			\text{virtual dim}(Q(V/\!/T, \tilde{\delta})) = \text{dim}(V) - \text{dim}(T) + \langle \tilde{\delta}, \kappa \rangle
		\end{align*}
		by example (\ref{exampleToricqmaps1}). Let $d$ be the maximal degree appearing in $P$. From the previous Lemma \ref{positive_cone_lemma} we see that that there is only a finite number of $\xi$-effective classes $\tilde{\delta}$ so that 
		\begin{align*}
			\text{virtual dim}(Q(V/\!/T, \tilde{\delta})) \leq d
		\end{align*}
		and therefore only a finite number of degrees $\tilde{\delta}$ contribute to the sum. The analogous statement for $\langle P \rangle^G$ follows from this and the abelianisation formula.
	\end{proof}
	\subsubsection{The semi-positive case}
	We have seen that in the positive case the function $\langle P \rangle^G$ is regular. In the strictly semi-positive case, namely when $\kappa \in \partial \overline{\mathfrak{c}}$, even though the series $\langle P \rangle^G$ contains infinitely many terms, we can use a result of Szenes and Vergne to argue that it converges somewhere.  
	
	In \cite[Definition 3.3 and discussion below]{SzenesVergne}, Szenes and Vergne show that only a simplicial cone in $\chi(T)^\vee$ contributes to the sum (i.e. for all $\lambda$ not in the cone, the coefficient of $q^\lambda$ is zero), hence $\langle P \rangle^T$ is a honest power series in those coordinates. Moreover, they show that it has nonzero radius of convergence \cite[Lemma 3.3]{SzenesVergne}:
	\begin{theorem}[Szenes-Vergne]\label{SzenesVergneTheorem1}
		For every integral basis $\lambda^1, \dots, \lambda^r$ of $\chi(T)$ contained in $\overline{\mathfrak{c}}$, the dual basis $\boldsymbol{\lambda}$ is called a $\mathfrak{c}$-positive basis. Let $\boldsymbol{\lambda}$ be a $\mathfrak{c}$-positive basis. If $\lambda \in \chi(T)^\vee$ is a degree whose corresponding quasimap space $Q(V/\!/T,\lambda)$ has nonzero virtual fundamental class $[Q(V/\!/T,\lambda)]^{\text{vir}}\neq 0$, then $\lambda$ is contained in the simplicial cone spanned by $\boldsymbol{\lambda}$. This makes $\langle P \rangle^T$ a power series in the coordinates defined by $\boldsymbol{\lambda}$, converging on a nonempty open polydisk centered at the origin.
	\end{theorem}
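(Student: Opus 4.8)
The statement divides into two parts that I would treat separately: first, that the support of $\langle P\rangle^T$ is contained in the simplicial cone spanned by $\boldsymbol{\lambda}$, so that $\langle P\rangle^T$ is a genuine power series in the coordinates $q^{\lambda_1},\dots,q^{\lambda_r}$ associated to $\boldsymbol{\lambda}$; and second, that this power series has a positive radius of convergence, i.e.\ converges on a nonempty polydisk around the origin.

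For the first part, the plan is this. If $[Q(V/\!/T,\lambda)]^{\text{vir}}\neq 0$ then $Q(V/\!/T,\lambda)$ is in particular nonempty, so there exists a genus-zero stable quasimap of degree $\lambda$, whence $\lambda\in\text{NE}(\xi)$ by definition of the effective cone (this can also be read off the description $Q(V/\!/T,\lambda)\simeq L(\lambda)/\!/T$ of Example~\ref{exampleToricqmaps1}, which is empty unless $\lambda$ is $\xi$-effective). By Shoemaker's identification $\text{NE}(\xi)=\mathfrak{c}^\vee$ invoked above, and since each $\lambda^i$ lies in $\overline{\mathfrak{c}}$, every $\delta\in\mathfrak{c}^\vee$ satisfies $\langle\delta,\lambda^i\rangle\ge 0$ for all $i$; writing $\delta=\sum_i a_i\lambda_i$ in the dual basis and using $\langle\delta,\lambda^i\rangle=a_i$, this says exactly that $\delta$ lies in the simplicial cone $\sum_i\mathbb{R}_{\ge 0}\lambda_i$. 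Applying this to $\delta=\lambda$ gives the containment, and hence only monomials $q^\lambda$ with $\lambda\in\sum_i\mathbb{N}\lambda_i$ occur in $\langle P\rangle^T$, which is therefore an honest power series in $q^{\lambda_1},\dots,q^{\lambda_r}$.

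For the convergence, the plan is to estimate the coefficients directly. Decomposing $P$ into homogeneous pieces (harmless), and applying the non-equivariant abelian Jeffrey--Kirwan formula (Corollary~\ref{nonequivariantInvariants} specialised to $G=T$, where $|W|=1$, $\Delta=\emptyset$ and the sum over lifts is trivial), the coefficient of $q^\lambda$ equals
\[
a_\lambda \;=\; \text{JK}^{\mathfrak{A}}_{O}\!\Big(\,\prod_{\rho\in\mathfrak{A}}\rho(x)^{-1-\langle\lambda,\rho\rangle}\cdot R(x)\,\Big),\qquad R:=P\prod_{\alpha\in\Delta}\alpha,
\]
with $R$ a \emph{fixed} polynomial independent of $\lambda$. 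Writing the Jeffrey--Kirwan residue as a finite sum over admissible flags of iterated residues, and choosing for each flag linear coordinates in which the flag weights become $x_1,\dots,x_r$, one extracts the coefficient of $x_1^{-1}\cdots x_r^{-1}$ by expanding the remaining factors $\rho(x)^{-1-\langle\lambda,\rho\rangle}$ into multivariate power series. Each resulting summand is a product of multinomial coefficients whose entries depend linearly on $\lambda$, times fixed constants (the coefficients of the non-flag weights in these coordinates) raised to powers depending linearly on $\lambda$. Bounding each multinomial coefficient by $2^{O(|\lambda|)}$, each power-of-constant factor by $e^{O(|\lambda|)}$, and the number of summands (and of flags) by $e^{O(|\lambda|)}$, one gets $|a_\lambda|\le C\,R_0^{\,|\lambda|}$ for suitable constants $C,R_0$, so that $\langle P\rangle^T$ converges on the nonempty polydisk $\{\,|q^{\lambda_i}|<1/R_0\,\}$.

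The hard part is this last uniform geometric bound. In the strictly semi-positive case $\kappa\in\partial\overline{\mathfrak{c}}$, the degree-matching condition forced by a nonzero integral, $\langle\lambda,\kappa\rangle=\deg P+|\Delta|+\dim T-\dim V$, cuts out an \emph{unbounded} slice of $\mathfrak{c}^\vee$, so infinitely many degrees contribute and one cannot collapse to a finite sum as in the positive case; the estimate must therefore genuinely control the residue combinatorics, ensuring that super-exponential individual terms are never accompanied by more than a subexponential number of companions. Organising this bookkeeping is exactly the content of \cite[Lemma~3.3]{SzenesVergne}, and I would follow that argument. (Once convergence is known, the sum can be recognised as the rational function produced by the Vafa--Intriligator formula, but that identification presupposes convergence and so cannot be used in its place.)
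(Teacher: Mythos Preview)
The paper does not supply its own proof of this theorem: it is stated as a result of Szenes--Vergne, with a citation to \cite[Definition~3.3 and Lemma~3.3]{SzenesVergne} and nothing more. So there is no in-paper argument to compare against.

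That said, your proposal is a reasonable reconstruction of what such a proof contains. Your first part is a clean and correct argument: nonzero virtual class forces nonemptiness, hence $\lambda\in\text{NE}(\xi)=\mathfrak{c}^\vee$, and then the duality $\langle\lambda,\lambda^i\rangle=a_i\ge 0$ (with integrality of the $a_i$ coming from $\boldsymbol{\lambda}$ being an integral basis) places $\lambda$ in the simplicial cone. This is exactly the mechanism behind the paper's one-line remark that ``only a simplicial cone in $\chi(T)^\vee$ contributes to the sum.'' One small wording wrinkle: in your displayed formula the factor $\prod_{\alpha\in\Delta}\alpha$ is the insertion coming from the definition of $\langle P\rangle^T$ (roots of the ambient $G$), whereas your parenthetical ``$\Delta=\emptyset$'' refers to the roots appearing in Corollary~\ref{nonequivariantInvariants} when specialised to a torus; these are two different $\Delta$'s, and it would be clearer to keep them apart.

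For the convergence, you correctly identify that the work is an exponential bound on the JK-residue coefficients and that this is precisely \cite[Lemma~3.3]{SzenesVergne}. Your sketch of the estimate (finite sum over flags, expand non-flag weights, bound multinomials and number of terms by $e^{O(|\lambda|)}$) is the right shape, though in practice the bookkeeping needs care: one must check that the exponents $-1-\langle\lambda,\rho\rangle$ do not produce super-exponentially many expansion terms after the degree constraint is imposed, and that the constants hidden in $O(|\lambda|)$ are uniform over flags. You acknowledge this and defer to Szenes--Vergne, which is also exactly what the paper does.
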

	\begin{rem}
		The choice of a $\mathfrak{c}$-positive basis $\boldsymbol{\lambda}$ identifies $\widecheck{T}$ with $(\mathbb{C}^\ast)^r$ and in particular it identifies the points of $\widecheck{T}$ that are close to the origin of $\mathbb{C}^r$. We think of this origin is a \textit{limit point} of $\widecheck{T}$, which we will denote with $O$. The series $\langle P\rangle^T$ converges in a neighborhood of this limit point. Different choices of such bases $\boldsymbol{\lambda}$ correspond to different limit points in $\widecheck{T}$, and $\langle P\rangle^T$ converges around all of them.
	\end{rem}
	\begin{ex}
		Here we try to clarify the situation with some pictures. Assume that the momentum cone has two chambers $\mathfrak{c}_1, \mathfrak{c}_2$ and that our stability $\xi$ belongs to $\mathfrak{c}:= \mathfrak{c}_1$. Assume moreover that $\mathfrak{c}$ can be split into two simplicial cones $\mathfrak{s}_1, \mathfrak{s}_2$ spanned by integral bases.
		\[\includegraphics[width=.7\textwidth, keepaspectratio]{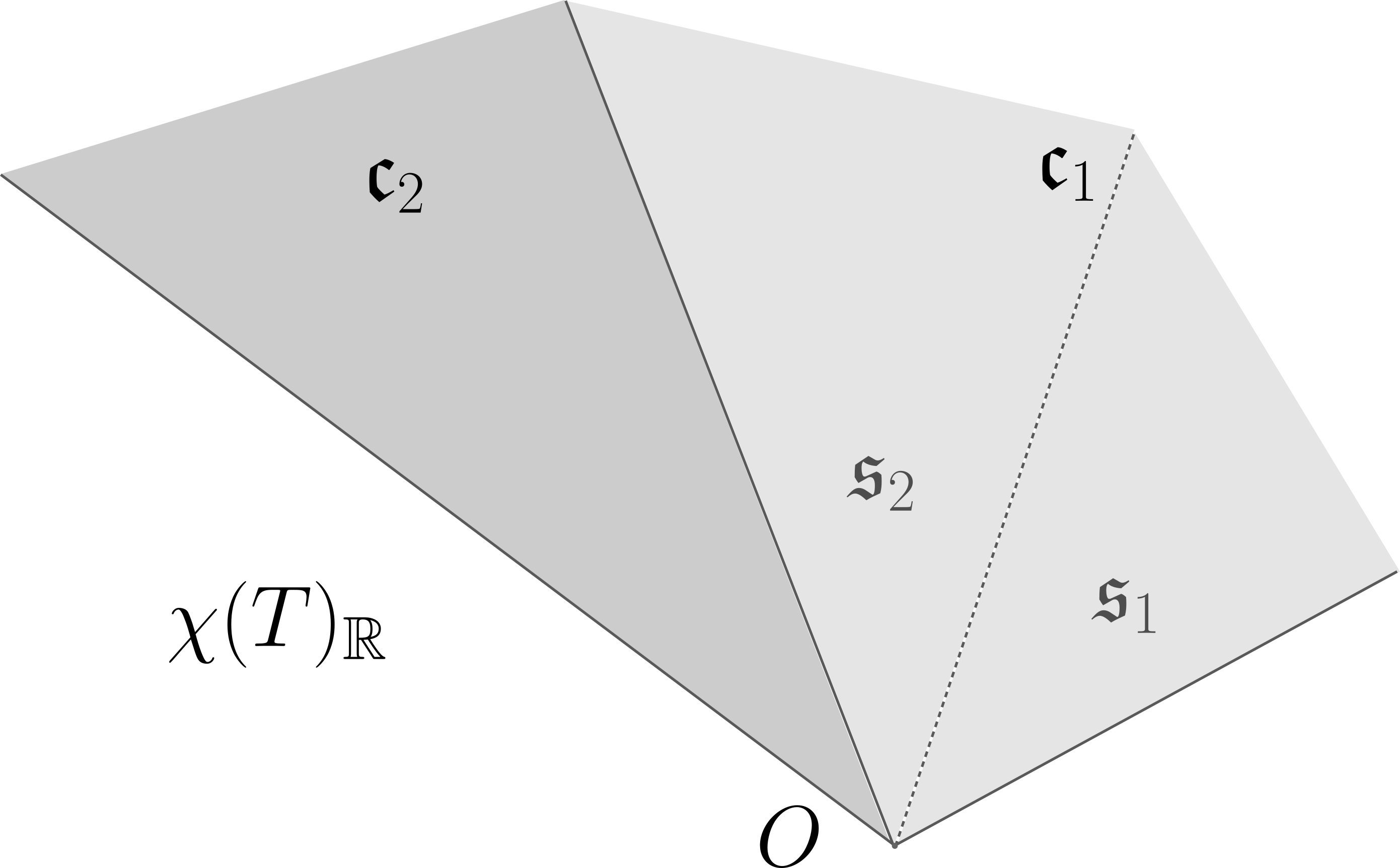}\]
		Each of these two bases determines a set of coordinates on $\widecheck{T}$, and the formal sum $\langle P \rangle^T$ becomes a power series with nonzero convergence radius in such coordinates. In particular, it converges in an analytic neighborhood $N_{\mathfrak{s}_1}$ and $N_{\mathfrak{s}_2}$ of the origin in such coordinates.
		\[\includegraphics[width=.5\textwidth, keepaspectratio]{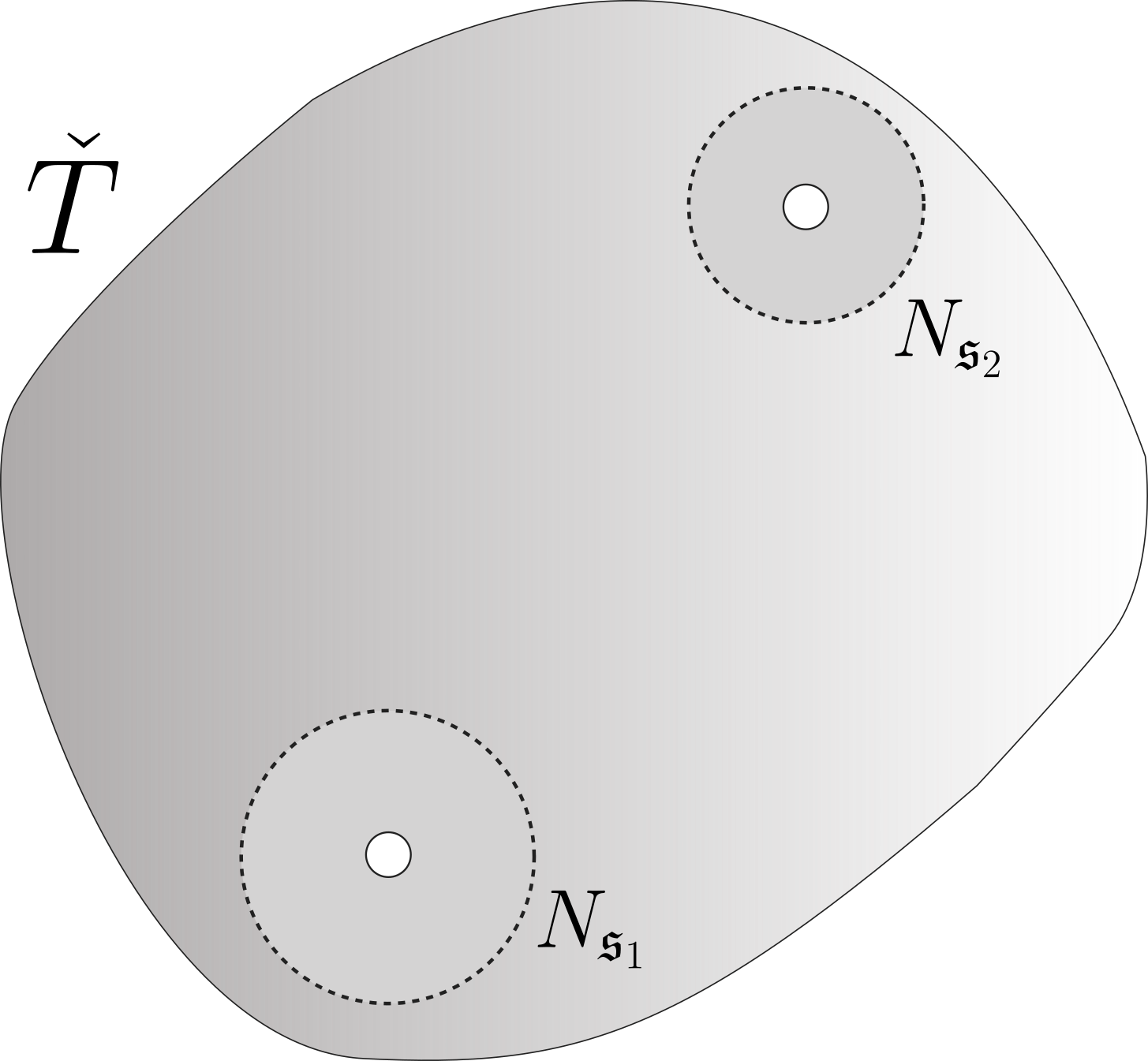}\]
	\end{ex}
	We now want to show that $\langle P \rangle^G$ converges somewhere on $\widecheck{G}$, and we want to deduce this from what we know about the domain of convergence of $\langle P \rangle^T$.
	\begin{lem}\label{inversePolydiskLemma}
		Consider the morphism $\chi(T)_\mathbb{R} \rightarrow \widecheck{T}$ given by $\psi \mapsto [i \psi]$ and the inverse image of the convergence domain of $\langle P \rangle^T$
		\begin{align*}
			D:= \left\lbrace \psi \in \chi(T)_\mathbb{R} \quad \vert \quad \langle P \rangle^T \text{ converges absolutely at } [i \psi] \right\rbrace.
		\end{align*} For every $\mathfrak{c}$-positive basis $\boldsymbol{\lambda}$ of $\chi(T)^\vee$ there are positive numbers $\epsilon_1, \dots, \epsilon_r>0$ such that the set
		\begin{align*}
			\left\lbrace \sum_{i=1}^r a_i \lambda_i \quad \vert \quad a_i \geq \epsilon_i \,\, \forall \,\, i \right\rbrace \subset \chi(T)^\vee_\mathbb{R}
		\end{align*}
		is contained in $D$.
	\end{lem}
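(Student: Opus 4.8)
The plan is to unwind absolute convergence at the imaginary points $[i\psi]$, feed in the structural result of Szenes--Vergne (Theorem~\ref{SzenesVergneTheorem1}), and translate the resulting open polydisk of convergence into a system of linear inequalities on $\psi$.

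First I would record that for $q=[i\psi]$ with $\psi\in\chi(T)_\mathbb{R}$ and $\mu\in\chi(T)^\vee$ one has $|q^\mu|=|e^{2\pi i\langle\mu,i\psi\rangle}|=e^{-2\pi\langle\mu,\psi\rangle}$, so $\langle P\rangle^T$ converges absolutely at $[i\psi]$ precisely when $\sum_{\mu}|c_\mu|\,e^{-2\pi\langle\mu,\psi\rangle}<\infty$, where $c_\mu:=\int_{[Q(V/\!/T,\mu)]^{\text{vir}}}\text{CW}^\mu\big(P\prod_{\alpha\in\Delta}\alpha\big)$. By Theorem~\ref{SzenesVergneTheorem1} applied to the chosen $\mathfrak{c}$-positive basis $\boldsymbol{\lambda}=(\lambda_1,\dots,\lambda_r)$, one has $c_\mu=0$ unless $\mu$ lies in the simplicial cone spanned by $\boldsymbol{\lambda}$, i.e. $\mu\in\sum_i\mathbb{Z}_{\geq 0}\,\lambda_i$; hence in the coordinates $q_i=q^{\lambda_i}$ the sum is a genuine power series $\sum_{n\in\mathbb{Z}_{\geq 0}^r}c_n\,q_1^{n_1}\cdots q_r^{n_r}$, which by the same theorem converges on a nonempty open polydisk $\{|q_i|<R_i\}$ with all $R_i>0$, and, being a power series, converges there absolutely.

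Since at $q=[i\psi]$ one has $q_i=e^{-2\pi\langle\lambda_i,\psi\rangle}\in\mathbb{R}_{>0}$, the condition $|q_i|<R_i$ is equivalent to the linear inequality $\langle\lambda_i,\psi\rangle>-\frac{1}{2\pi}\log R_i$, so $D$ contains the open shifted cone $\{\psi:\langle\lambda_i,\psi\rangle>-\frac{1}{2\pi}\log R_i\ \text{for all }i\}$. I would then set $\epsilon_i:=\max\{1,\,1-\frac{1}{2\pi}\log R_i\}>0$, so that $\langle\lambda_i,\psi\rangle\geq\epsilon_i$ for every $i$ forces $\psi$ into this cone, hence into $D$. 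Finally, parametrising $\psi=\sum_i a_i\lambda^i$ by the integral basis $\lambda^1,\dots,\lambda^r$ of $\chi(T)$ dual to $\boldsymbol{\lambda}$, so that $\langle\lambda_i,\psi\rangle=a_i$, the region $\{\psi:\langle\lambda_i,\psi\rangle\geq\epsilon_i\ \forall i\}$ is exactly the set $\{\sum_i a_i\lambda^i:a_i\geq\epsilon_i\ \forall i\}$ appearing in the statement (and each such $\psi$ lies in $\overline{\mathfrak{c}}$, since the $\lambda^i$ do).

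The only genuine input is Theorem~\ref{SzenesVergneTheorem1}: once $\langle P\rangle^T$ is known to be a power series in the $q_i$ with strictly positive polyradius of convergence, what remains is bookkeeping in passing between the multiplicative coordinates $q_i$ near the limit point $O$ and the additive coordinates on $\chi(T)_\mathbb{R}$ via $\psi\mapsto(e^{-2\pi\langle\lambda_i,\psi\rangle})_i$. The one point to be careful about is that these exponential expressions are real and positive, so that ``absolute convergence of $\langle P\rangle^T$ at $[i\psi]$'' is literally absolute convergence of the Szenes--Vergne power series at a point of its polydisk of convergence, and that the inequalities on the pairings $\langle\lambda_i,\psi\rangle$ come out with the correct sign rather than reversed.
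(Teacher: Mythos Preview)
Your proof is correct and follows essentially the same route as the paper: both arguments invoke Theorem~\ref{SzenesVergneTheorem1} to get a polydisk of convergence in the coordinates $q_i=q^{\lambda_i}$, then pull back along $\psi\mapsto(e^{-2\pi\langle\lambda_i,\psi\rangle})_i$ to obtain linear inequalities on $\psi$. The paper's proof is simply the coordinate version of yours, writing the map as $(x_1,\dots,x_r)\mapsto(e^{-2\pi x_1},\dots,e^{-2\pi x_r})$ and taking the inverse image of the polydisk without spelling out the $\epsilon_i$; you are more explicit about the dual-basis identification $\psi=\sum_i a_i\lambda^i$ and about the choice of $\epsilon_i$, which usefully clarifies the (slightly ambiguous) statement, where the displayed set is written with $\lambda_i\in\chi(T)^\vee$ but must be read in $\chi(T)_\mathbb{R}$ via the dual basis.
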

	\begin{proof}
		A $\mathfrak{c}$-positive basis $\boldsymbol{\lambda}$ of $\chi(T)^\vee$ defines coordinates on $\widecheck{T}$ as in (\ref{coordinatesOnDualGroup}), and the morphism above becomes
		\begin{align*}
			\mathbb{R}^r \rightarrow (\mathbb{C}^\ast)^r \quad : \quad (x_1, \dots, x_r) \mapsto (e^{-2\pi x_1}, \dots, e^{-2\pi x_r}).
		\end{align*}
		By Theorem \ref{SzenesVergneTheorem1}, $\langle P \rangle^T$ is a power series on $(\mathbb{C}^\ast)^r$ and there are positive numbers $\nu_1, \dots, \nu_r>0$ such that this series converges on the intersection of the polydisk $\Delta(O, \nu_1, \dots, \nu_r)$ with $(\mathbb{C}^\ast)^r$. The thesis follows by considering the inverse image of this intersection.
	\end{proof}
	\begin{figure}[h]
		\centering
		\includegraphics[width=.5\textwidth, keepaspectratio]{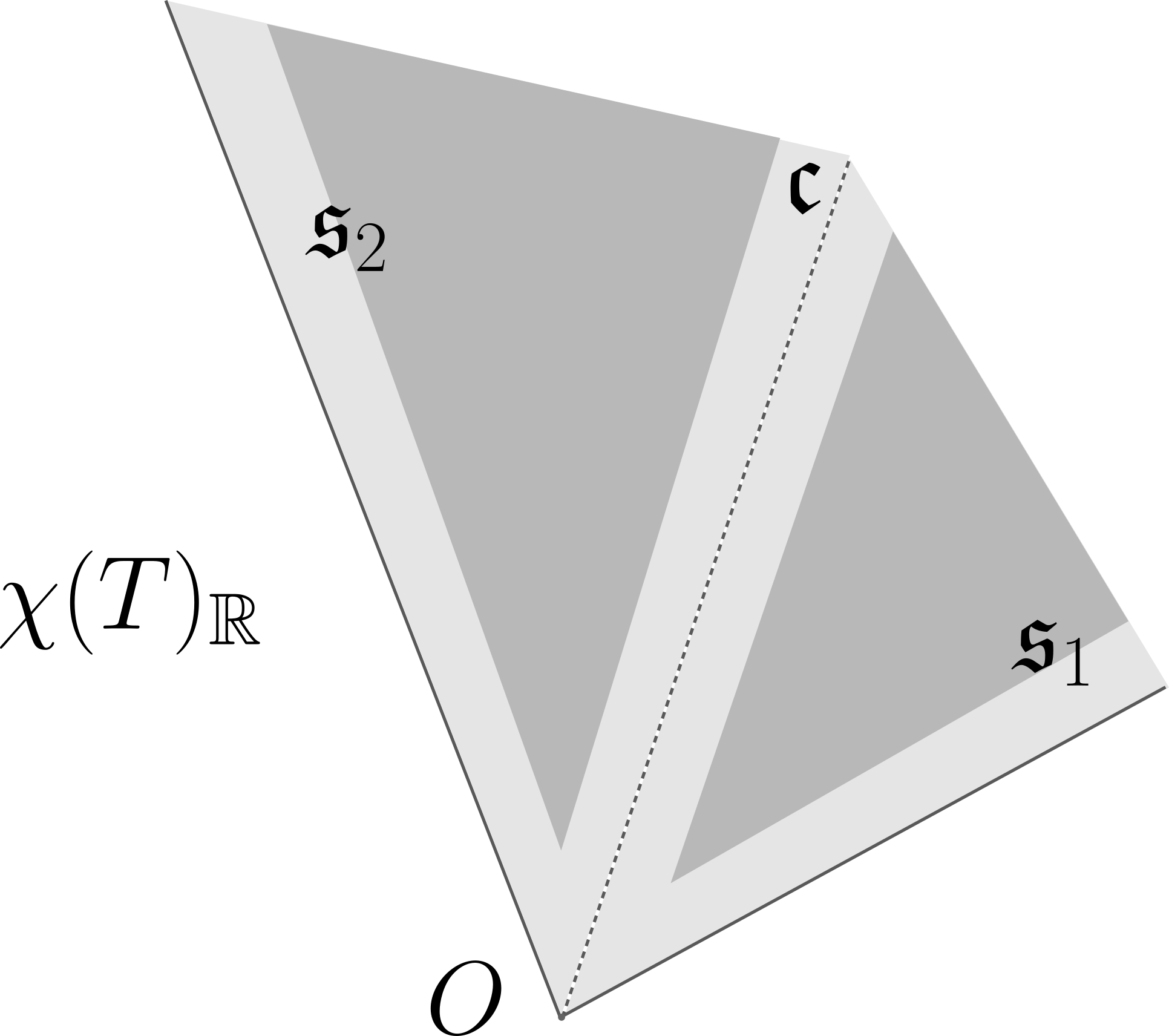}
		\caption{For every simplicial cone $\mathfrak{s}$ spanned by integral elements of $\mathfrak{c}$, a translation $\mathfrak{s}$ is contained in the inverse image $D$ of the convergence domain of $\langle P \rangle^T$. Being $\chi(G)_\mathbb{R}$ a linear subspace, this means that if it intersects the interior of $\mathfrak{s}$, then it intersects $D$ too.}
		\label{figurePolytope}
	\end{figure}
	\begin{lem}\label{intersectionLemma}
		Let $L$ be a ray centered at the origin of $\chi(T)_\mathbb{R}$ and contained in the interior of the chamber $\mathfrak{c}$. Then $D \cap L \neq \emptyset$.
	\end{lem}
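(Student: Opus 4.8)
The plan is to deduce the statement from two ingredients: the \emph{convexity} of the set $D$, and the fact, supplied by Lemma~\ref{inversePolydiskLemma}, that $D$ contains a translate of every simplicial cone cut out by a $\mathfrak{c}$-positive basis. Neither ingredient alone suffices — a ray in the interior of $\mathfrak{c}$ may lie on the walls of every unimodular subdivision of $\overline{\mathfrak{c}}$, so it need not meet the interior of any single such cone, and the shifts in Lemma~\ref{inversePolydiskLemma} point strictly outward — but together they will.

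First I would establish that $D$ is convex. By Theorem~\ref{SzenesVergneTheorem1}, after choosing any $\mathfrak{c}$-positive basis $\boldsymbol{\lambda}$ the series $\langle P\rangle^T$ is an honest power series $\sum_{\lambda}c_\lambda q^\lambda$ in the coordinates $q_i=q^{\lambda_i}$, with all exponents in the positive orthant. The domain of absolute convergence of such a power series is logarithmically convex: if $\sum|c_\lambda||q'^\lambda|<\infty$ and $\sum|c_\lambda||q''^\lambda|<\infty$, then for $\theta\in[0,1]$ the weighted AM--GM inequality gives $|q^\lambda|\le\theta|q'^\lambda|+(1-\theta)|q''^\lambda|$ whenever $\log|q_i|=\theta\log|q_i'|+(1-\theta)\log|q_i''|$ for all $i$, hence absolute convergence at $q$ as well. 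Under the map $\psi\mapsto[i\psi]$ one has $\log|q_i([i\psi])|=-2\pi\langle\lambda_i,\psi\rangle$, and $\psi\mapsto(\langle\lambda_i,\psi\rangle)_i$ is a linear isomorphism of $\chi(T)_\mathbb{R}$; therefore $D$, being the preimage of a convex set under a real-linear isomorphism, is convex (and this does not depend on the basis chosen).

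Next I would run the geometric argument. Write $L=\mathbb{R}_{\ge 0}v$ with $v$ in the interior of $\mathfrak{c}$, and choose a unimodular subdivision $\Sigma$ of the rational polyhedral cone $\overline{\mathfrak{c}}$ (such subdivisions exist by standard toric geometry); the maximal cones of $\Sigma$ are spanned by integral bases of $\chi(T)$ contained in $\overline{\mathfrak{c}}$, so their dual bases are $\mathfrak{c}$-positive and Lemma~\ref{inversePolydiskLemma} provides, for each maximal cone $S\in\Sigma$, a vector $\psi_0^S$ with $\psi_0^S+S\subseteq D$. Since $v$ lies in the interior of $\mathfrak{c}=\bigcup_{S}S$ and the walls of $\Sigma$ form a finite union of lower-dimensional cones, a small ball around $v$ is covered, up to a negligible set, by the interiors of the maximal cones; by Carath\'eodory's theorem I may write $v=\sum_{j}\theta_j w_j$ as a finite convex combination with each $w_j$ in the interior of some maximal cone $S_j$. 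For each $j$, since $\operatorname{int}(S_j)$ is open, $tw_j-\psi_0^{S_j}=t\bigl(w_j-t^{-1}\psi_0^{S_j}\bigr)$ lies in $S_j$ once $t$ is large enough that $w_j-t^{-1}\psi_0^{S_j}\in\operatorname{int}(S_j)$; hence $tw_j\in\psi_0^{S_j}+S_j\subseteq D$. Taking $t$ large enough to work for all the finitely many $j$ simultaneously, convexity of $D$ gives $tv=\sum_j\theta_j(tw_j)\in D$, and $tv\in L$, so $D\cap L\neq\emptyset$.

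The main obstacle, and the reason both inputs are genuinely needed, is exactly the phenomenon above: for a chamber such as $\overline{\mathfrak{c}}=\operatorname{Cone}((1,1),(1,-1))$ the interior ray through $(1,0)$ meets no unimodular subcone in its interior, and because the shifts $\psi_0^S$ in Lemma~\ref{inversePolydiskLemma} point strictly outward it meets no translated cone either; it is only the convexity of the genuine convergence domain $D$ — which in turn relies on Szenes and Vergne's result that $\langle P\rangle^T$ is a power series — that lets the ray enter $D$.
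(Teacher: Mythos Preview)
Your proof is correct and rests on the same two pillars as the paper's argument --- the logarithmic convexity of $D$ and Lemma~\ref{inversePolydiskLemma} --- but the global organisation is different. The paper fixes a simplicial decomposition of $\mathfrak{c}$ and argues by induction on the codimension $k$ of the face whose relative interior contains $L$: the base case $k=0$ is exactly Lemma~\ref{inversePolydiskLemma}, and for the inductive step one takes a top cone $\sigma$ adjacent to that face, uses that $D\cap\sigma\neq\emptyset$, forms the set $U=\{(1-t)p+tl:\ t>1,\ p\in D\cap\sigma,\ l\in L\}$, shows $U$ meets the interior of a neighbouring top cone, and then invokes convexity to pull $L$ into $D$. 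Your route bypasses the induction entirely: you take a unimodular refinement, express the generator $v$ of $L$ as a finite convex combination of nearby points $w_j$ lying in interiors of maximal cones, scale so that each $tw_j$ enters the corresponding shifted cone $\psi_0^{S_j}+S_j\subset D$, and conclude by convexity. This is more direct and makes the role of unimodularity explicit (the paper's phrase ``a decomposition of the chamber $\mathfrak{c}$ into simplicial cones'' must in fact be unimodular for Lemma~\ref{inversePolydiskLemma} to apply to each top cone, a point you state and the paper leaves implicit). One small quibble: the appeal to Carath\'eodory is slightly oblique --- what you actually need, and what your parenthetical remark about the walls being lower-dimensional already gives, is that $v$ lies in the convex hull of $B\setminus(\text{walls})$ for a small ball $B$ around $v$; for instance $v=\tfrac12(w+(2v-w))$ for generic $w\in B$ does the job.
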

	\begin{proof}
		Consider a decomposition of the chamber $\mathfrak{c}$ into simplicial cones. The ray $L$ is contained in the relative interior of a codimension $k$ cone appearing as a wall in this decomposition. The proof is by induction on $k$. The base case is $k=0$, so $L$ is contained in the interior of one of this cones and the thesis follows immediately from Lemma \ref{inversePolydiskLemma}. We now focus on the inductive step. Let $\sigma$ be a simplicial cone appearing in the decomposition and having a codimension $k$ wall $w$ containing $L$ in its relative interior. By inductive hypothesis $D \cap \sigma$ is a nonempty open subset invariant under multiplication by $\lambda > 1$. Then consider the space of combinations
		\begin{align*}
			U := \left\lbrace (1-t)p + t l \quad \vert \quad t>1, \, p \in D \cap \sigma, \, l \in L \right\rbrace,
		\end{align*}		
		which again is invariant under multiplication by $\lambda>1$. Since $L$ is in the interior of $\mathfrak{c}$, this open set $U$ must intersect some other top dimensional simplicial cone $\nu$ appearing in the decomposition of $\mathfrak{c}$ in some interior point $Q$. Then the ray spanned by $Q$ intersects $D$ and therefore $U$ intersects $D$ too. Since $D$ is convex by the general theory of power series (convergence domains are logarithmically convex), this means that $L$ intersects $D$. 
	\end{proof}
	We are finally ready to provide an answer to question \ref{question1}:
	\begin{theorem}\label{theoremQuestion1}
		If the triple $(V,T,\xi)$ is semi-positive, then the series $\langle P \rangle^G$ converges at some point of $\widecheck{G}$.
	\end{theorem}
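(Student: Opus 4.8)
The plan is to reduce the statement to the abelian case via Theorem~\ref{AbelianisationSeries} and then to locate a point of $\widecheck{G}$ inside the convergence domain of $\langle P\rangle^T$, which has already been controlled in the preceding lemmas. By Theorem~\ref{AbelianisationSeries}, the series $\langle P\rangle^G$ converges at a point $q\in\widecheck{G}$ as soon as $\langle P\rangle^T$ converges at $\sigma(q)$. Since the Weyl vector $\frac12\sum_{\alpha\in\Delta^+}\alpha$ is a $W$-invariant character, hence a character of $G$ (as recalled in the remark above), the involution $\sigma$ restricts to an involution of the subtorus $\widecheck{G}\subseteq\widecheck{T}$. Therefore it suffices to exhibit a single point of $\widecheck{G}$ lying in the domain of absolute convergence of $\langle P\rangle^T$: if $r$ is such a point then $q:=\sigma(r)\in\widecheck{G}$ works, because $\sigma$ is an involution so $\sigma(q)=r$.

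To produce such a point I would pass to the covering $\chi(T)_\mathbb{R}\to\widecheck{T}$, $\psi\mapsto[i\psi]$, used in Lemma~\ref{inversePolydiskLemma}; this map sends the linear subspace $\chi(G)_\mathbb{R}\subseteq\chi(T)_\mathbb{R}$ into $\widecheck{G}$, so it is enough to show that the set $D$ of Lemma~\ref{inversePolydiskLemma} meets $\chi(G)_\mathbb{R}$. The key geometric input is that the stability parameter $\xi$ is a character of $G$ belonging to the open chamber $\mathfrak{c}$ of the momentum cone of $T\curvearrowright V$. Consequently the ray $L:=\mathbb{R}_{>0}\,\xi$ is contained in $\chi(G)_\mathbb{R}$ and lies in the interior of $\mathfrak{c}$. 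Lemma~\ref{intersectionLemma} then yields a point $\psi\in L\cap D$, i.e. $[i\psi]\in\widecheck{G}$ lies in the convergence domain of $\langle P\rangle^T$. Setting $r:=[i\psi]$ and $q:=\sigma(r)\in\widecheck{G}$, Theorem~\ref{AbelianisationSeries} gives that $\langle P\rangle^G$ converges at $q$, which is precisely the assertion.

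The only points that require a moment of care are the two compatibilities invoked above: that $\sigma$ preserves $\widecheck{G}$, which follows from the Weyl vector being a character of $G$, and that the ray $\mathbb{R}_{>0}\,\xi$ is genuinely interior to $\mathfrak{c}$, which holds because $\mathfrak{c}$ is by definition the open chamber containing $\xi$. Neither is a real obstacle. The substantive analytic content — that $\langle P\rangle^T$ is an honest power series with positive polyradius of convergence and that its (logarithmically convex) convergence domain meets every ray interior to $\mathfrak{c}$ — is exactly the combined content of Theorem~\ref{SzenesVergneTheorem1} and Lemmas~\ref{inversePolydiskLemma}--\ref{intersectionLemma}, all of which we are entitled to assume. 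Thus the proof is simply the assembly of these ingredients, with Theorem~\ref{AbelianisationSeries} as the bridge between the nonabelian series and the abelian one.
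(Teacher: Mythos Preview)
Your proof is correct and follows essentially the same route as the paper's: take the ray $L=\mathbb{R}_{>0}\xi\subset\chi(G)_\mathbb{R}\cap\mathfrak{c}$, apply Lemma~\ref{intersectionLemma} to obtain $\psi\in L\cap D$, and then invoke Theorem~\ref{AbelianisationSeries}. If anything, you are more careful than the paper in explicitly tracking the involution $\sigma$ and checking that it preserves $\widecheck{G}$ (so that $q=\sigma([i\psi])$ still lies in $\widecheck{G}$); the paper's proof leaves this step implicit.
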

	\begin{proof}
		Let $L$ be the ray contained in $\chi(G)_\mathbb{R} \cap \mathfrak{c}$ spanned by the stability $\xi$. By Lemma \ref{intersectionLemma} we have that $L \cap D \neq \emptyset$. If $\psi$ is a point of $L \cap D$, then $\langle P \rangle^T$ converges at $[i\psi] \in \widecheck{G}$ and the same does $\langle P \rangle^G$ by the abelianisation theorem \ref{AbelianisationSeries}.
	\end{proof}
	\subsection{The formula}\label{subsectionVafaIntriligatorFormula}
	Here we address question \ref{question2}, giving a closed formula for $\langle P \rangle^G$. Let $U(\mathfrak{A})\subset \chi(T)_\mathbb{C}^\vee$ be the open subset where all the weights $\rho \in \mathfrak{A}$ don't vanish. We start by considering the morphism
	\begin{align}\label{pMorphism}
		p : U(\mathfrak{A}) \rightarrow \widecheck{T} \quad : \quad p(u):= \sum_{\rho \in \mathfrak{A}} \frac{\log\left(\rho(u)\right)}{2 \pi i}[\rho].
	\end{align}
	Let $\boldsymbol{\lambda} = \lbrace \lambda_1, \dots, \lambda_r \rbrace$ be an integral basis of $\chi(T)^\vee$ and let $\lbrace \lambda^1, \dots, \lambda^r \rbrace$ be the dual basis. 
	They determine an open embedding $U(\mathfrak{A}) \hookrightarrow \mathbb{C}^r$, an isomorphism $\widecheck{T}\simeq (\mathbb{C}^\ast)^r$ and a top form $d\mu := d\lambda^1 \wedge \dots \wedge d\lambda^r$ on $U(\mathfrak{A})$. In these coordinates the morphism $p$ becomes
	\begin{align*}
		p : U(\mathfrak{A}) \rightarrow (\mathbb{C}^\ast)^r \quad : \quad p_i(u) = \prod_{\rho\in \mathfrak{A}} \rho(u)^{\langle \lambda_i, \rho\rangle},
	\end{align*}
	which is related to the regular function
	\begin{align*}
		D_\mathfrak{A} : U(\mathfrak{A}) \rightarrow \mathbb{C} \quad : \quad D_\mathfrak{A}(u):= \text{det}\left(\sum_{\rho \in \mathfrak{A}} \frac{\langle \lambda_i, \rho\rangle\langle \lambda_j, \rho\rangle}{\rho(u)}\right)_{i,j=1, \dots, r}
	\end{align*}
	as shown by the following result \cite[Proposition 4.5 (3)]{SzenesVergne}:
	\begin{lem}\label{JacobianP}
		For every $u \in U(\mathfrak{A})$ the following forms
		\begin{align*}
			\frac{dp_1}{p_1(u)} \wedge \dots \wedge \frac{dp_r}{p_r(u)} = D_\mathfrak{A}(u) d\mu
		\end{align*}
		coincide. In particular, $D_\mathfrak{A}$ doesn't depend on the choice of the integral basis $\boldsymbol{\lambda}$, and the function $p$ has nonzero Jacobian at every point where the function $D_\mathfrak{A}$ is nonzero. 
	\end{lem}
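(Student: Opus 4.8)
The plan is to compute the left-hand side directly in the coordinates attached to $\boldsymbol{\lambda}$ and to recognise the resulting Jacobian matrix of $p$ as the matrix defining $D_\mathfrak{A}$. Write $\lambda^1, \dots, \lambda^r \in \chi(T)$ for the basis dual to $\boldsymbol{\lambda}$; then every weight $\rho \in \mathfrak{A}$ decomposes as $\rho = \sum_{j=1}^r \langle \lambda_j, \rho\rangle \lambda^j$, so as a linear function on $\chi(T)_\mathbb{C}^\vee$ its differential is the constant $1$-form $d\rho = \sum_{j=1}^r \langle \lambda_j, \rho\rangle\, d\lambda^j$. Taking logarithmic derivatives of $p_i = \prod_{\rho \in \mathfrak{A}} \rho^{\langle \lambda_i, \rho\rangle}$ on $U(\mathfrak{A})$, where every $\rho$ is invertible, gives at a point $u$
\begin{align*}
	\frac{dp_i}{p_i} = \sum_{\rho \in \mathfrak{A}} \langle \lambda_i, \rho\rangle\, \frac{d\rho}{\rho(u)} = \sum_{j=1}^r \left( \sum_{\rho \in \mathfrak{A}} \frac{\langle \lambda_i, \rho\rangle\, \langle \lambda_j, \rho\rangle}{\rho(u)} \right) d\lambda^j .
\end{align*}
Thus $dp_i/p_i = \sum_j M_{ij}(u)\, d\lambda^j$ with $M(u)$ the symmetric matrix whose determinant is, by definition, $D_\mathfrak{A}(u)$. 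Wedging the one-forms $dp_1/p_1, \dots, dp_r/p_r$ and using that a wedge of linear combinations of $d\lambda^1, \dots, d\lambda^r$ equals the determinant of the coefficient matrix times $d\lambda^1 \wedge \dots \wedge d\lambda^r = d\mu$ yields the claimed identity $\frac{dp_1}{p_1} \wedge \dots \wedge \frac{dp_r}{p_r} = D_\mathfrak{A}(u)\, d\mu$.

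Two consequences then follow formally. For independence of $D_\mathfrak{A}$ from the choice of $\boldsymbol{\lambda}$: a change of integral basis $\lambda_i' = \sum_k A_{ik}\lambda_k$ with $A \in \mathrm{GL}_r(\mathbb{Z})$ replaces the matrix $M$ above by $A\, M\, A^{\mathsf{T}}$, whose determinant is $\det(A)^2 \det M = \det M$ since $\det A = \pm 1$. For the Jacobian: reading the displayed wedge identity in the coordinates $\lambda^j$ on the source and $q_i$ on the target (which satisfy $q_i \circ p = p_i$) gives $\det\bigl(\partial p_i / \partial \lambda^j\bigr)(u) = \bigl(\prod_i p_i(u)\bigr)\, D_\mathfrak{A}(u)$, and $\prod_i p_i(u) \neq 0$ on $U(\mathfrak{A})$, so the Jacobian of $p$ at $u$ vanishes exactly when $D_\mathfrak{A}(u)$ does.

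This is essentially a bookkeeping computation, and it is precisely \cite[Proposition 4.5(3)]{SzenesVergne}, so one may simply cite that reference. The only points that need a little care are the pairing conventions — in particular that the coefficient of $d\lambda^j$ in $d\rho$ is $\langle \lambda_j, \rho\rangle$, which is what makes the symmetric matrix $M(u)$, and hence $D_\mathfrak{A}$, appear — and matching the sign $\det(A) = \pm 1$ in the change-of-basis step, neither of which presents any real obstacle.
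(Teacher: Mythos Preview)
Your proof is correct and matches the paper's approach: the paper simply cites \cite[Proposition 4.5(3)]{SzenesVergne} without argument, and you have written out the logarithmic-differentiation computation that underlies that reference, together with the easy change-of-basis and Jacobian consequences. Nothing further is needed.
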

	These facts allow us to prove the following fact appearing in \cite[Corollary 4.6]{SzenesVergne}:
	\begin{lem}\label{finiteOrbit}
		The morphism $p : U(\mathfrak{A}) \rightarrow \widecheck{T}$ is dominant and there is an open subscheme $V \subseteq \widecheck{T}$ so that the restriction
		\begin{align*}
			p : p^{-1}(V) \rightarrow V
		\end{align*}
		is \'etale. In particular, for all points $q$ in that open subset, the fiber $p^{-1}(q)$ is the disjoint union of a finite number of reduced points. 
	\end{lem}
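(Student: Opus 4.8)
The plan is to feed Lemma~\ref{JacobianP} into the Jacobian criterion for \'etaleness, the only real work being the non-vanishing of $D_{\mathfrak{A}}$. Recall first that, although the formula (\ref{pMorphism}) for $p$ involves a logarithm, changing the branch of $\log\rho(u)$ alters the summand $\tfrac{\log\rho(u)}{2\pi i}\rho$ by an element of $\chi(T)$, so $p$ descends to a genuine morphism of smooth irreducible affine varieties $p : U(\mathfrak{A}) \to \widecheck{T} \simeq (\mathbb{C}^\ast)^r$; in the coordinates attached to $\boldsymbol{\lambda}$ it is $u \mapsto \big(\prod_{\rho \in \mathfrak{A}}\rho(u)^{\langle \lambda_i, \rho \rangle}\big)_{i=1}^{r}$, a tuple of Laurent monomials in the linear forms $\rho(u)$, and source and target both have dimension $r$. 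Since $d\mu$ is a nowhere-vanishing top form on $U(\mathfrak{A})$ and $\tfrac{dp_1}{p_1}\wedge\dots\wedge\tfrac{dp_r}{p_r}$ is the pullback of a translation-invariant (hence nowhere-vanishing) top form on $(\mathbb{C}^\ast)^r$, Lemma~\ref{JacobianP} says precisely that the locus where $p$ fails to be \'etale is $Z := \{D_{\mathfrak{A}} = 0\}\subset U(\mathfrak{A})$.

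The heart of the argument is to show $D_{\mathfrak{A}}\not\equiv 0$. Put $v_\rho := (\langle \lambda_1,\rho\rangle,\dots,\langle \lambda_r,\rho\rangle)\in\mathbb{Z}^r$, so the matrix defining $D_{\mathfrak{A}}(u)$ is $\sum_{\rho\in\mathfrak{A}}\rho(u)^{-1}\,v_\rho v_\rho^{\mathsf{T}}$. By the Cauchy--Binet formula,
\begin{align*}
D_{\mathfrak{A}}(u) = \sum_{S}\Big(\prod_{\rho\in S}\rho(u)^{-1}\Big)(\det V_S)^2,
\end{align*}
the sum running over $r$-element subsets $S$ of $\mathfrak{A}$, with $V_S$ the matrix whose columns are the $v_\rho$ for $\rho\in S$. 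After collecting terms that produce the same monomial in the forms $\rho(u)^{-1}$, the coefficients are sums of the nonnegative integers $(\det V_S)^2$ and the surviving monomials are linearly independent over $\mathbb{C}$, so there is no cancellation; and at least one coefficient is positive, for otherwise every $\det V_S$ would vanish, i.e.\ the $v_\rho$ would fail to span $\mathbb{R}^r$, i.e.\ some nonzero cocharacter would annihilate every weight $\rho\in\mathfrak{A}$ --- but then the corresponding one-parameter subgroup of $T$ would act trivially on $V$, hence lie in every stabiliser on the semistable locus, contradicting our standing assumption that these stabilisers are trivial. Hence $D_{\mathfrak{A}}\not\equiv 0$, so $Z$ is a proper closed subset of $U(\mathfrak{A})$ and $p$ is \'etale on the nonempty open set $U' := U(\mathfrak{A})\setminus Z$; in particular $p|_{U'}$ is open, so $p$ is dominant.

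It remains to promote ``\'etale on an open subset of the source'' to ``\'etale over an open subscheme of the target''. Since $\dim p(Z)\le\dim Z < r = \dim\widecheck{T}$, the closure $\overline{p(Z)}$ is a proper closed subset of $\widecheck{T}$, while $p(U')$ is open because $p|_{U'}$ is \'etale. Hence $V := p(U')\setminus\overline{p(Z)}$ is a nonempty open subscheme of $\widecheck{T}$, and any point of $p^{-1}(V)$ maps outside $p(Z)$ and so lies in $U'$; thus $p^{-1}(V)\subseteq U'$ and $p : p^{-1}(V)\to V$ is \'etale. Finally, for $q\in V$ the fibre $p^{-1}(q)$ is a quasi-compact scheme \'etale over $\mathrm{Spec}\,\mathbb{C}$, hence a finite disjoint union of reduced points. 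The one genuinely geometric input is the non-vanishing of $D_{\mathfrak{A}}$, which is where the hypothesis on $\mathfrak{A}$ (equivalently, on the stabilisers of the $T$-action) is used; the passage to an open set of the target and the analysis of the fibres are formal, and I expect no difficulty there.
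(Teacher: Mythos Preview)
Your overall architecture---identifying the non-\'etale locus with $\{D_{\mathfrak{A}}=0\}$ via Lemma~\ref{JacobianP}, then constructing the open $V\subseteq\widecheck{T}$---is correct and matches the paper's proof. The gap is in your argument that $D_{\mathfrak{A}}\not\equiv 0$. The Cauchy--Binet expansion is right, but the assertion that the distinct ``monomials'' $\prod_{\rho\in S}\rho(u)^{-1}$ are linearly independent as rational functions on $\chi(T)^\vee_\mathbb{C}$ is false: already for $r=2$ and weights $e_1,e_2,e_1{+}e_2$ one has the relation $\tfrac{1}{u_1(u_1+u_2)}+\tfrac{1}{u_2(u_1+u_2)}=\tfrac{1}{u_1 u_2}$. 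Worse, your argument would apply verbatim to $T=\mathbb{C}^\ast$ acting on $\mathbb{C}^2$ with weights $1$ and $-1$ (the action on the $\xi=1$ semistable locus is free and the weights span), yet there $D_{\mathfrak{A}}(u)=u^{-1}+(-u)^{-1}\equiv 0$.

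What you have not used is the standing hypothesis that $V/\!/T$ is \emph{proper}, which is equivalent to the weights $\mathfrak{A}$ lying in an open half-space of $\chi(T)_\mathbb{R}$ (this is what Szenes--Vergne call a \emph{projective} sequence, and it is exactly what the paper invokes by citing \cite[Proposition~4.5(1) and Corollary~4.6]{SzenesVergne}). With this in hand your Cauchy--Binet computation is easily repaired: projectivity furnishes a real point $u_0\in\chi(T)^\vee_\mathbb{R}\cap U(\mathfrak{A})$ with $\rho(u_0)>0$ for every $\rho\in\mathfrak{A}$, and then $D_{\mathfrak{A}}(u_0)=\sum_S(\det V_S)^2\prod_{\rho\in S}\rho(u_0)^{-1}$ is a sum of nonnegative real numbers, strictly positive as soon as some $\det V_S\neq 0$---and your spanning argument from trivial stabilisers correctly supplies that. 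Once $D_{\mathfrak{A}}\not\equiv 0$ is established in this way, the remainder of your proof goes through unchanged.
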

	\begin{proof}
		The function $D_\mathfrak{A}$ is nonzero: this can be seen from \cite[Proposition 4.5, (1)]{SzenesVergne}, using the fact that $\mathfrak{A}$ is projective as in the proof of \cite[Corollary 4.6]{SzenesVergne}. In particular, since there is a point where the Jacobian of $p$ is nonzero, the image of $p$ is of dimension $r$, hence $p$ is dominant. Moreover the vanishing locus of $D_\mathfrak{A}$ is of dimension strictly smaller than $r$, so its image in $\widecheck{T}$ through $p$ is positive codimensional. Its complement contains an nonempty open subscheme $V$, and its elements $q$ are such that $p$ is \'etale at all elements of the fiber $p^{-1}(q)$. 
	\end{proof}
	We can finally use the abelianisation formula to answer question \ref{question2}:
	\begin{definition}
		Given $q \in \widecheck{T}$ we consider the sum
		\begin{align}\label{sum_VI}
			\sum_{w \in p^{-1}(q)} \frac{P(w) \prod_{\alpha \in \Delta}\alpha(w)}{D_{\mathfrak{A}}(w)\prod_{\rho \in \mathfrak{A}}\rho(w)},
		\end{align}
		which is generically well defined by the previous Lemma \ref{finiteOrbit}.
	\end{definition}
	\begin{theorem}[Vafa-Intriligator formula]\label{theoremQuestion2}
		Consider a finite dimensional representation $V$ of a reductive group $G$ together with a GIT stability $\xi \in \chi(G)$ so that the action on the semistable locus is free and $V/\!/T$ is proper and semi-positive, where $T \subseteq G$ is a maximal subtorus. Given a class $P \in A^\ast_G(\text{pt})$, the sum (\ref{sum_VI}) extends to a rational function
		\begin{align*}
			\langle P\rangle_{\mathfrak{B}} : \widecheck{T} \dashrightarrow \mathbb{C}.
		\end{align*}
		Let $q \in \widecheck{G}$ belong to the domain of convergence of the power series $\langle P \rangle^G$. Then it belongs to the domain of definition of $\langle P \rangle_{\mathfrak{B}}$ and 
		\begin{align*}
			\langle P \rangle^G(q) = \langle P \rangle_\mathfrak{B}(\sigma(q)).
		\end{align*}
	\end{theorem}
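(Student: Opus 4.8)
The plan is to deduce the formula from the abelianisation formula of Theorem \ref{MainTheoremAbFor} together with the toric Vafa--Intriligator formula of Szenes and Vergne. The first step is to replace the left-hand side using Theorem \ref{AbelianisationSeries}: for every $q\in\widecheck{G}$ at which one of the two sums converges one has $\langle P\rangle^G(q)=\langle P\rangle^T(\sigma(q))$, where $\langle P\rangle^T$ is the abelian series (\ref{QuasimapSeriesAb}) with insertion $P\prod_{\alpha\in\Delta}\alpha$; such $q$ exist by Theorem \ref{theoremQuestion1}. This reduces the statement to identifying $\langle P\rangle^T$, on its domain of convergence, with the rational function extending (\ref{sum_VI}), and to comparing the convergence domains of $\langle P\rangle^T$ and $\langle P\rangle^G$ through $\sigma$.

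For the identification I would apply the Szenes--Vergne Vafa--Intriligator formula for toric targets, recalled in the appendix as Theorem \ref{SzenesVergneVers1}. Applied to the spaces $Q(V/\!/T,\lambda)\simeq L(\lambda)/\!/T$ of Example \ref{exampleToricqmaps1} with insertion $P\prod_{\alpha\in\Delta}\alpha$, it expresses $\langle P\rangle^T$, on the nonempty open set where it converges, as the fibre sum (\ref{sum_VI}). Matching the summand is a bookkeeping computation: since $\text{CW}^{\lambda}$ is the Kirwan map for the $T$-action on $L(\lambda)$, the insertion $P\prod_{\alpha\in\Delta}\alpha$ evaluated at a solution $w\in U(\mathfrak{A})$ of the Bethe-type equations $p(w)=q$ contributes the numerator $P(w)\prod_{\alpha\in\Delta}\alpha(w)$; the factor $\prod_{\rho\in\mathfrak{A}}\rho(w)$ in the denominator records the normalisation of the abelian $I$-function and of $[L(\lambda)/\!/T]^{\text{vir}}$ recalled in Example \ref{exampleToricqmaps1}; and $D_{\mathfrak{A}}(w)$ is the Jacobian of $p$ by Lemma \ref{JacobianP}. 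Since $p$ is dominant and generically \'etale by Lemma \ref{finiteOrbit}, the fibre sum (\ref{sum_VI}) is the trace along $p$ of this rational function on $U(\mathfrak{A})$, hence a rational function on $\widecheck{T}$; this is the claimed extension $\langle P\rangle_{\mathfrak{B}}$, and it agrees with $\langle P\rangle^T$ wherever the latter converges.

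It remains to promote the equality from a dense open subset to the whole domain of convergence, which is exactly the content of the clause that $q$ lies in the domain of definition of $\langle P\rangle_{\mathfrak{B}}$. By Theorem \ref{SzenesVergneTheorem1}, once a $\mathfrak{c}$-positive basis is fixed $\langle P\rangle^T$ is a genuine power series in the corresponding coordinates, so its domain of absolute convergence is a connected, logarithmically convex open set $\Omega\subseteq\widecheck{T}$ on which it is holomorphic; since it coincides there with the rational function $\langle P\rangle_{\mathfrak{B}}$ on a nonempty open subset, and a holomorphic function on the connected set $\Omega$ cannot equal a rational function whose polar locus meets $\Omega$, the poles of $\langle P\rangle_{\mathfrak{B}}$ avoid $\Omega$ and $\langle P\rangle^T=\langle P\rangle_{\mathfrak{B}}$ on all of $\Omega$. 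Finally I would compare convergence domains: the translation $\sigma$ does not change the absolute value of any monomial, and because $\xi$ lies in the open chamber $\mathfrak{c}$ the support $\text{NE}(\xi)=\mathfrak{c}^\vee$ of $\langle P\rangle^T$ meets $\ker(\chi(T)^\vee\to\chi(G)^\vee)$ only at the origin, so restricting $\langle P\rangle^T$ to $\widecheck{G}$ and twisting by $\sigma$ produces no cancellation and the domain of convergence of $\langle P\rangle^G$ is precisely the $\sigma$-preimage of $\Omega$. Combining these, if $q\in\widecheck{G}$ lies in the domain of convergence of $\langle P\rangle^G$ then $\sigma(q)\in\Omega$, hence $\sigma(q)$ lies in the domain of definition of $\langle P\rangle_{\mathfrak{B}}$ and $\langle P\rangle^G(q)=\langle P\rangle^T(\sigma(q))=\langle P\rangle_{\mathfrak{B}}(\sigma(q))$.

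The step I expect to be the main obstacle is the combinatorial matching in the second paragraph: threading the signs, the Jacobian factor $D_{\mathfrak{A}}$, and the $\prod_{\rho\in\mathfrak{A}}\rho$ arising from the virtual class of $L(\lambda)/\!/T$ through the Chern--Weil/Kirwan dictionary so that they reassemble exactly into (\ref{sum_VI}), and in particular checking that the extra insertion $\prod_{\alpha\in\Delta}\alpha$ -- which is $W$-invariant, being $(-1)^{\vert\Delta^+\vert}$ times the square of the Weyl denominator -- is carried correctly through and evaluated at the Bethe roots. The analytic steps (the identity-theorem argument and the comparison of convergence domains) are comparatively routine once the support results of Theorems \ref{SzenesVergneTheorem1} and \ref{theoremQuestion1} are in hand.
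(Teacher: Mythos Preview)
Your proposal is correct and follows the same route as the paper: combine the abelianisation identity $\langle P\rangle^G(q)=\langle P\rangle^T(\sigma(q))$ of Theorem \ref{AbelianisationSeries} with the toric result of Szenes--Vergne in the form of Theorem \ref{SzenesVergneVers1}, applied with the insertion $P\prod_{\alpha\in\Delta}\alpha$ in place of $P$. The ``combinatorial matching'' you flag as the main obstacle and the analytic-continuation argument in your third paragraph are both already packaged into the statement of Theorem \ref{SzenesVergneVers1}---which directly gives the fibre sum (\ref{sum_VI}) for this insertion and asserts that the rational extension $\langle P\rangle_{\mathfrak{B}}$ is defined on the whole convergence domain of $\langle P\rangle^T$---so those paragraphs are unnecessary; the paper's own proof is accordingly two sentences.
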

	\begin{proof}
		The equality $\langle P \rangle^T(q) = \langle P \rangle_\mathfrak{B}(q)$ for every $q \in \widecheck{T}$ so that the series $\langle P \rangle^T$ converges is the main result of the paper of Szenes and Vergne \cite[Theorem 4.1]{SzenesVergne}. Its complete formulation is given in Theorem \ref{SzenesVergneVers1} in the Appendix. The claim follows from this equality combined with Theorem \ref{theoremQuestion1}.
	\end{proof}
	\begin{rem}
		In principle it might happen that, for every point $q \in \widecheck{G}$ where $\langle P \rangle^G(q)$ converges but the sum (\ref{sum_VI}) in not well defined, for example because $D_\mathfrak{A}$ vanishes on $p^{-1}(q)$. This result claims that even if the sum is not defined, the induced rational function $\langle P \rangle_{\mathfrak{B}}$ does extend to $q$, and $\langle P \rangle^G(q) = \langle P \rangle_{\mathfrak{B}}(q)$.
		
		On the other hand, if there is a point $q \in \widecheck{G}$ so that the sum (\ref{sum_VI}) is well defined, then it is defined for a generic $q \in U \subseteq \widecheck{G}$ and it coincides with $\langle P \rangle^G$ on the intersection of $U$ with the convergence domain of  $\langle P \rangle^G$.
	\end{rem}
	\begin{rem}
		Notice that the left-hand side $\langle P \rangle^G(q)$ depends on the stability parameter $\xi \in \chi(G)$, while the right-hand side $\langle P \rangle_{\mathfrak{B}}(\sigma(q))$ does not. The parameter $\xi$ plays the role of selecting the locus of $\widecheck{G}$ where the power series on the left converges.
	\end{rem}
	\begin{ex}[Intersection numbers on Quot schemes]
		It's easy to see that this formula reduces to the formula of \cite[Theorem 3]{MarianOpreaVI} by Marian and Oprea in the case where the target $V/\!/G$ is the Grassmannian $\text{Gr}_r(\mathbb{C}^n)$. This corresponds to the choice $V= \text{Mat}_{r \times n}(\mathbb{C})$ and $G:= \text{GL}_r(\mathbb{C})$ acting by multiplication on the left. If $T=(\mathbb{C}^\ast)^r$ is the diagonal torus in $G$, with the canonical identifications $\chi(T)^\vee_\mathbb{C}\simeq \mathbb{C}^r$ and let $u_1, \dots, u_r$ be the coordinates there. Then it's easy to check that
		\begin{align*}
			p(u) &= (u_1^n, \dots, u_r^n) \text{ and }
			D_\mathfrak{A}(u) = n^r \prod_{i=1}^r u_i^{-1},
		\end{align*}
		and that $\sigma(q) = (-1)^{r-1}q$,	so that the formula becomes
		\begin{align*}
			\langle P \rangle^G(q) = \frac{1}{n^r} \sum_{u_1^n = (-1)^{r-1}q} \cdots \sum_{u_r^n = (-1)^{r-1}q} \frac{P(u_1, \dots, u_r) \prod_{i\neq j}(u_i-u_j)}{\prod_{k=1}^r u^{n-1}_k}.
		\end{align*}
		This becomes the formula of Marian and Oprea once we realise that in this case the function that associates to $\delta \in \chi(G)^\vee \simeq \mathbb{Z}$ the dimension $e(\delta)$ of the quasimap space $Q(\text{Gr}_r(\mathbb{C}^n), \delta) \simeq \text{Quot}_{\mathbb{P}^1}(\mathcal{O}^{\oplus n}, r, \delta)$ is injective:
		\begin{align*}
			e(\delta) = n \delta + r(n-r).
		\end{align*}
		Indeed, given a symmetric polynomial $P \in \mathbb{C}[u_1, \dots, u_r]^{\mathfrak{S}_r}$ which is homogeneous of degree $e(\delta)$, we find that
		\begin{align*}
			\langle P \rangle(q) = r! \, q^{\delta} \int_{\text{Quot}_{\mathbb{P}^1}(\mathcal{O}^{\oplus n}, r, \delta)} \text{CW}^\delta(P).
		\end{align*}
		By evaluating our formula at $q = (-1)^{r-1}$ we recover the result of Marian and Oprea:
		\begin{align*}
			\int_{\text{Quot}_{\mathbb{P}^1}(\mathcal{O}^{\oplus n}, r, \delta)} \!\!\text{CW}^\delta(P)
            =& \frac{(-1)^{d(r-1)}}{r! \, n^r} \sum_{u_1^n = 1} \cdots \sum_{u_r^n = 1} \frac{P(u_1, \dots, u_r) \prod_{i\neq j}(u_i-u_j)}{\prod_{k=1}^r u^{n-1}_k}\\
			=& \frac{(-1)^{\epsilon}}{r! \, n^r} \sum_{u_1^n = 1} \cdots \sum_{u_r^n = 1} P(u_1, \dots, u_r) \prod_{i< j}(u_i-u_j)^2\prod_{k=1}^r u_k.
		\end{align*}
		where $\epsilon = d(r-1) + \binom{r}{2}$. Notice that the factor $\frac{1}{r!}$ doesn't appear in Marian and Oprea's formula, since their sum is over \textit{unordered} $r$-tuples of roots of unity.
	\end{ex}
	\appendix
	\section{Proof of Lemma \ref{generalisation_webb}}\label{appendix_proof_key_proposition}
	In this section we want to adapt the proof of \cite[Theorem 1.1.1]{WebbAbelianNonAbelian} to prove the slight generalisation we need, namely Lemma \ref{generalisation_webb}. No argument here is really original: we just check that Webb's proof goes through without obstructions.
	
	We need to introduce some further notation in order to make the formulae and computations easier to read:
	\begin{itemize}
		\item consider a ring $R$ and the corresponding ring $R[z]_z$ of Laurent polynomials. Given $P \in R[z]_z$, we will denote with $\overline{P} \in R[z]_z$ the Laurent polynomial defined as $\overline{P}(z):= P(-z)$.
		\item Whenever we look at a $\mathbb{C}^\ast$-fixed locus $F$ in a space $X$ with an equivariant obstruction theory, we will use the notation
		\begin{align*}
			[F]^{\text{vir}}_\text{loc} := \frac{[F]^{\text{vir}}}{e^{\mathbb{C}^\ast}(N^\text{vir}_{F/X})}
		\end{align*}
		for the localised virtual fundamental class of $F$.
		\item Finally, whenever we have a Chow cohomology class $\alpha$ defined on $X$, we will write $\alpha \cap [F]^{\text{vir}}_\text{loc}$ in order to denote the pairing $\alpha_{\vert F} \cap [F]^{\text{vir}}_\text{loc}$ in the equivariant Chow group of $F$, omitting the restriction symbol.
	\end{itemize}
	\subsection{Abelianisation and fixed loci}
	We start by recalling some important ingredients from Webb's work. 
	\begin{rem}\label{action_inversion}
		In \cite{WebbAbelianNonAbelian} all results are for the component $F_{\delta,0}$ of the fixed locus, but they immediately imply those for the component $F_{0,\delta}$, that we are interested in, since the two components are mapped one into the other by the automorphism of $Q(V/\!/G,\delta)$ induced by the automorphism inverting the coordinates on $\mathbb{P}^1$. This automorphism inverts the action of $\mathbb{C}^\ast$ on $\mathbb{P}^1$, hence on the moduli space, thus all results for $F_{0,\delta}$ are the same as the results for $F_{\delta, 0}$ with the opposite action.
	\end{rem} 
	Let $\tilde{\delta} \mapsto \delta$ and consider the inverse image of the open subscheme $V(G)^\text{ss}/\!/T$ through the evaluation morphism:
	\begin{equation}\label{openDiagram}
		\begin{tikzcd}
			F^0_{0,\tilde{\delta}} \arrow[rd, phantom, "\square"] \arrow[r, hook, "h"] \arrow[d, "\text{ev}"] & F_{0,\tilde{\delta}} \arrow[d, "\text{ev}"]\\
			V(G)^\text{ss}/\!/T \arrow[r, hook, "j"] & V/\!/T
		\end{tikzcd}.
	\end{equation}
	On this locus $F^0_{0,\tilde{\delta}}$ there is a well defined morphism to $F_{0,\delta}$. Indeed, given a $\mathbb{C}^\ast$-fixed quasimap to $V/\!/T$ mapping to $V(G)^\text{ss}/\!/T$ via the evaluation map, we can build a $G$-principal bundle mapping equivariantly to $V$ by
	\[
	\begin{tikzcd}
		P \arrow[r]\arrow[d] & V\\
		\mathbb{P}^1 & 
	\end{tikzcd} \mapsto
	\begin{tikzcd}
		P \times_T G \arrow[r]\arrow[d] & V\\
		\mathbb{P}^1 & 
	\end{tikzcd}
	\]
	and this defines a quasimap to $V/\!/G$. This construction immediately generalises to families, again by extending the structure group of the principal bundle from $T$ to $G$, and defines a morphism $\psi : F^0_{0,\tilde{\delta}} \rightarrow F_{0,\delta}$, whose image we will denote with $k : \mathcal{F}_{0,\tilde{\delta}} \hookrightarrow F_{0,\delta}$.
	If we denote with $P_{\tilde{\delta}}$ the parabolic subgroup of $G$ defined by $\tilde{\delta}$ \cite[Equation (35)]{WebbAbelianNonAbelian} we obtain the following diagram
	\begin{equation}\label{pfDiagram}
		\begin{tikzcd}
			F^0_{0,\tilde{\delta}} \arrow[r, swap, "\psi"] \arrow[d, "\text{ev}"]\arrow[rd, phantom, "\square"] & \mathcal{F}_{0,\tilde{\delta}} \arrow[d, "i"] \arrow[r, swap, hook, "k"] & {F}_{0,\delta} \arrow[d, swap, "\text{ev}"]\\
			V(G)^\text{ss}/\!/T \arrow[r, "p"] & V(G)^\text{ss}/P_{\tilde{\delta}} \arrow[r, "f"] & V/\!/G 
		\end{tikzcd}
	\end{equation}
	which coincides with part of \cite[Diagram (33)]{WebbAbelianNonAbelian}. The following proposition is due to Webb \cite[Equation (35) and discussion below]{WebbAbelianNonAbelian}
	\begin{pro}\label{connected_components}
		For every orbit $\mathfrak{o}$ of the Weyl group $W$ on the set $\lbrace \tilde{\delta} \in \chi(T)^\vee \, : \, \tilde{\delta} \mapsto \delta \rbrace$ pick a representative $\tilde{\delta}_{\mathfrak{o}}$. Then the map
		\begin{align*}
			k: \coprod_{\mathfrak{o}} \mathcal{F}_{0,\tilde{\delta}_{\mathfrak{o}}} \rightarrow F_{0,\delta},
		\end{align*}
		obtained by collecting all the morphisms $k$ defined above in a disjoint union, is a decomposition of $F_{0,\delta}$ into its connected components.
	\end{pro}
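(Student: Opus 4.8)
The plan is to reduce to \cite[Equation~(35) and the surrounding discussion]{WebbAbelianNonAbelian} via Remark~\ref{action_inversion}, and to assemble Webb's ingredients rather than to reprove them. The backbone is the classification of $\mathbb{C}^\ast$-fixed principal $G$-bundles on $\mathbb{P}^1$ with the standard scaling action: such a bundle $P$ is equivariantly trivial over each of $\mathbb{C}_0$ and $\mathbb{C}_\infty$, and after adjusting these trivialisations the transition cocycle on $\mathbb{C}_0\cap\mathbb{C}_\infty\cong\mathbb{C}^\ast$ becomes a cocharacter $\tilde{\delta}\colon\mathbb{C}^\ast\to T\subset G$. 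This $\tilde{\delta}$ is well defined up to the Weyl group $W=N_G(T)/T$; its underlying class in $\chi(T)^\vee$ satisfies $\tilde{\delta}\mapsto\delta$; and $P$ carries a canonical $\mathbb{C}^\ast$-equivariant reduction of structure group to the parabolic $P_{\tilde{\delta}}$. Consequently the $W$-orbit of $\tilde{\delta}$ is a complete discrete invariant of the isomorphism type of a $\mathbb{C}^\ast$-fixed degree-$\delta$ bundle, and it is locally constant in families, so $F_{0,\delta}$ decomposes into the open-and-closed loci on which the underlying bundle has a fixed type.

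First I would identify the locus of type $[\tilde{\delta}]$ with $\mathcal{F}_{0,\tilde{\delta}}$. Given a $\mathbb{C}^\ast$-fixed quasimap $(P,u)\in F_{0,\delta}$ whose bundle has type $[\tilde{\delta}]$, the quasimap has no base point over $\mathbb{C}_0$ and induces the constant map to $V/\!/G$, so freeness of the $G$-action trivialises $P|_{\mathbb{C}_0}$ via $u$; combining this trivialisation with the canonical $P_{\tilde{\delta}}$-reduction of $P$ realises $(P,u)$ as the extension of structure group $-\times_T G$ of a $\mathbb{C}^\ast$-fixed $T$-quasimap of degree $\tilde{\delta}$ whose evaluation point lands in the $G$-semistable locus, i.e.\ of a point of $F^0_{0,\tilde{\delta}}$; hence $(P,u)\in\psi(F^0_{0,\tilde{\delta}})=\mathcal{F}_{0,\tilde{\delta}}$. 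Conversely, every point of the image $\mathcal{F}_{0,\tilde{\delta}}$ of $\psi$ has bundle type $[\tilde{\delta}]$ by construction. Since $\mathcal{F}_{0,\tilde{\delta}}$ depends only on the $W$-orbit $\mathfrak{o}$ of $\tilde{\delta}$, picking one representative $\tilde{\delta}_{\mathfrak{o}}$ per orbit yields $F_{0,\delta}=\coprod_{\mathfrak{o}}\mathcal{F}_{0,\tilde{\delta}_{\mathfrak{o}}}$ as a disjoint union of open-and-closed subschemes, which is exactly the content of the map $k$.

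It remains to check that each $\mathcal{F}_{0,\tilde{\delta}}$ is connected, so that these pieces are genuinely the connected components. By the cartesian left square of Diagram~(\ref{pfDiagram}), $F^0_{0,\tilde{\delta}}$ is the open subscheme of $F_{0,\tilde{\delta}}=Q(V/\!/T,\tilde{\delta})_\infty$ lying over $V(G)^{\mathrm{ss}}/\!/T$, and the explicit toric description of Examples~\ref{exampleToricqmaps1} and~\ref{exampleToricqmaps3} exhibits $F_{0,\tilde{\delta}}$ as an open substack of a GIT quotient of an affine space by $T$, in particular irreducible; a nonempty open subscheme of an irreducible scheme is irreducible, hence connected, and $\mathcal{F}_{0,\tilde{\delta}}=\psi(F^0_{0,\tilde{\delta}})$ is then connected as the image of a connected space. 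Therefore the $\mathcal{F}_{0,\tilde{\delta}_{\mathfrak{o}}}$ are precisely the connected components of $F_{0,\delta}$.

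The step requiring the most care is the one I would treat as a black box: the classification of $\mathbb{C}^\ast$-fixed $G$-bundles on $\mathbb{P}^1$ together with their canonical parabolic reduction, formulated relatively over an arbitrary base, and the structure-group reduction that realises an arbitrary fixed $G$-quasimap as induced from a $T$-quasimap. This is the technical heart of \cite{WebbAbelianNonAbelian}, and the honest course is to invoke it directly rather than re-derive it; everything else above is formal.
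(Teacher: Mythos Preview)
The paper does not give its own proof of this proposition: it is stated as a result due to Webb and attributed directly to \cite[Equation~(35) and discussion below]{WebbAbelianNonAbelian}. Your sketch correctly outlines the content of Webb's argument --- classifying $\mathbb{C}^\ast$-fixed $G$-bundles on $\mathbb{P}^1$ by cocharacters up to the Weyl action, identifying each type-stratum with the image $\mathcal{F}_{0,\tilde{\delta}}$, and then checking connectedness via the explicit toric description of $F_{0,\tilde{\delta}}$ --- and you rightly isolate the relative classification of fixed bundles and their parabolic reduction as the technical black box to be imported from Webb rather than reproved.
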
	
	\subsection{Perfect obstruction theories}
	Since $\mathcal{F}_{0, \tilde{\delta}}$ is a connected component of $F_{0,\delta}$, the perfect obstruction theory on $Q(V/\!/G, \delta)$ induces one on $\mathcal{F}_{0,\tilde{\delta}}$ as described in \cite{Graber}
	\begin{align*}
		\mathbb{E}_{\vert \mathcal{F}_{0, \tilde{\delta}}}^{\mathbb{C}^\ast} \rightarrow \left(\mathbb{L}_{Q(V/\!/G, \delta)}\right)_{\vert \mathcal{F}_{0, \tilde{\delta}}}^{\mathbb{C}^\ast} \rightarrow \mathbb{L}_{\mathcal{F}_{0, \tilde{\delta}}},
	\end{align*}
	where $\mathbb{E} \rightarrow \mathbb{L}_{Q(V/\!/G, \delta)}$ is the obstruction theory on the quasimap moduli space. The virtual normal bundle is the moving part $\mathbb{E}\vert^\text{mov}_{\mathcal{F}_{0, \tilde{\delta}}}$. 
	
	Given $\tilde{\delta} \in \chi(T)^\vee$ consider the subset
	\begin{align*}
		\Delta(\tilde{\delta}) := \lbrace \alpha\in \Delta \, \vert \,  \langle\tilde{\delta}, \alpha \rangle <0 \rbrace
	\end{align*}
	of the set $\Delta$ of roots of $G$. In \cite[Corollary 5.2.3]{WebbAbelianNonAbelian} Webb proves the following
	\begin{lem}\label{psi_lemma}
		Let $\tilde{\delta}\mapsto \delta$. Then
		\begin{align*}
			\psi^\ast [\mathcal{F}_{0,\tilde{\delta}}]^\text{vir}_\text{loc} = h^\ast \left(\overline{K(\tilde{\delta})} \cap [F_{0,\tilde{\delta}}]^\text{vir}_\text{loc} \right)
		\end{align*}
		where
		\begin{align*}
			K(\tilde{\delta}):= \text{ev}^\ast\left(C(\tilde{\delta}) \prod_{\alpha \in \Delta(\tilde{\delta})}c_1(\mathcal{L}_\alpha)\right).
		\end{align*}
	\end{lem}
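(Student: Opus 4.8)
The plan is to recognise this identity as the transport of \cite[Corollary 5.2.3]{WebbAbelianNonAbelian} from the component $F_{\tilde{\delta},0}$ to $F_{0,\tilde{\delta}}$, and then to run Webb's argument. \emph{First} I would invoke Remark \ref{action_inversion}: the automorphism of $Q(V/\!/G,\delta)$ induced by swapping the coordinates on $\mathbb{P}^1$ carries $F_{0,\tilde{\delta}}$, $F^0_{0,\tilde{\delta}}$, $\mathcal{F}_{0,\tilde{\delta}}$ to their $(\cdot,0)$-counterparts while inverting the $\mathbb{C}^\ast$-action, so it is enough to establish the identity for the $(\cdot,0)$-components with $z$ in place of $-z$; the bar on $K(\tilde{\delta})$ is exactly the record of this substitution. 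From here on I work with $F_{\tilde{\delta},0}$.

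\emph{Second} I would set up the comparison of obstruction theories. The virtual tangent bundle of $Q(V/\!/G,\delta)$ is $R\pi_\ast$ of the complex (\ref{obsTheory}), i.e. of $u^\ast T_{V/G}$ for $u : Q(V/\!/G,\delta)\times\mathbb{P}^1 \to V/G$; restricting to $\mathcal{F}_{\tilde{\delta},0}\times\mathbb{P}^1$ and splitting off the $\mathbb{C}^\ast$-fixed part gives, as in \cite{Graber}, the induced obstruction theory on the component $\mathcal{F}_{\tilde{\delta},0}\subset F_{\delta,0}$ (Proposition \ref{connected_components}), with the moving part as virtual normal bundle. I would then pull everything back along $\psi$, which is flat because the left square of (\ref{pfDiagram}) is Cartesian over the smooth morphism $p$, so $\psi^\ast$ commutes with $R\pi_\ast$ by base change. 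On $F^0_{\tilde{\delta},0}$ the universal $G$-quasimap is induced from the universal $T$-bundle $\mathcal{P}^T$ by extension of structure group, $\mathcal{P}^T\times_T G\to V$, as in the construction preceding diagram (\ref{pfDiagram}); plugging this into the short exact sequence $0\to T_{(V/T)/(V/G)}\to T_{V/T}\to \text{pr}^\ast T_{V/G}\to 0$ attached to the smooth fibration $V/T\to V/G$, where $T_{(V/T)/(V/G)}$ is the bundle associated to the $T$-representation $\mathfrak{g}/\mathfrak{t}=\bigoplus_{\alpha\in\Delta}\mathfrak{g}_\alpha$, and applying $R\pi_\ast$, produces an exact triangle relating $\psi^\ast$ of the virtual tangent bundle of $\mathcal{F}_{\tilde{\delta},0}$ to $h^\ast$ of that of $F_{\tilde{\delta},0}$ together with $R\pi_\ast$ of the root line bundles $\mathcal{P}^T\times_T\mathfrak{g}_\alpha$, whose degree along $\mathbb{P}^1$ is $\langle\tilde{\delta},\alpha\rangle$.

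\emph{Third} I would evaluate these root contributions on the fixed loci by the same \v{C}ech / Euler-sequence bookkeeping used for Lemma \ref{ChernWeilEvaluation}: the $\mathbb{C}^\ast$-equivariant line bundle $\mathcal{P}^T\times_T\mathfrak{g}_\alpha$ has first Chern class $\text{ev}^\ast c_1(\mathcal{L}_\alpha)$ along $F^0_{\tilde{\delta},0}$ and $\mathbb{C}^\ast$-weight controlled by $\langle\tilde{\delta},\alpha\rangle$ along $\mathbb{P}^1$, so $R\pi_\ast$ of it splits into a moving part whose equivariant Euler class is the factor $C(\tilde{\delta},\alpha)$ of the localized virtual normal bundle and, exactly when $\alpha\in\Delta(\tilde{\delta})$, a surviving $\mathbb{C}^\ast$-fixed contribution accounting for the extra factor $c_1(\mathcal{L}_\alpha)$. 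Taking the product over $\alpha\in\Delta$ assembles $\text{ev}^\ast\bigl(C(\tilde{\delta})\prod_{\alpha\in\Delta(\tilde{\delta})}c_1(\mathcal{L}_\alpha)\bigr)=K(\tilde{\delta})$, and rephrasing the triangle of obstruction theories in terms of localized virtual fundamental classes (compatibility of \cite{Graber} with capping by Euler classes) gives $\psi^\ast[\mathcal{F}_{\tilde{\delta},0}]^\text{vir}_\text{loc}=h^\ast\bigl(K(\tilde{\delta})\cap[F_{\tilde{\delta},0}]^\text{vir}_\text{loc}\bigr)$. Substituting $-z$ for $z$ then yields the stated formula with $\overline{K(\tilde{\delta})}$.

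\emph{The main obstacle} is not a single computation but keeping the bookkeeping coherent: which summands of $R\pi_\ast$ are $\mathbb{C}^\ast$-fixed versus moving, the signs of the pairings $\langle\tilde{\delta},\alpha\rangle$, and the translation between ``cohomology class capped with a virtual class'' and ``modified obstruction theory''. In particular one must check that $p$, hence $\psi$, is smooth so the base change is legitimate, and that the decomposition of $F_{0,\delta}$ into the components $\mathcal{F}_{0,\tilde{\delta}_{\mathfrak{o}}}$ of Proposition \ref{connected_components} is compatible with restricting the obstruction theory. These are precisely the points verified in \cite[Section 5]{WebbAbelianNonAbelian}; the only genuinely new verification is that nothing breaks when $F_{\delta,0}$ is replaced throughout by $F_{0,\delta}$, which is guaranteed by Remark \ref{action_inversion}.
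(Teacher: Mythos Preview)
Your proposal is correct and matches the paper's approach exactly: the paper does not give an independent proof of this lemma but simply cites \cite[Corollary 5.2.3]{WebbAbelianNonAbelian} and observes (in the remark immediately following) that passing from $F_{\tilde{\delta},0}$ to $F_{0,\tilde{\delta}}$ amounts to the substitution $z\mapsto -z$, which turns $K(\tilde{\delta})$ into $\overline{K(\tilde{\delta})}$. Your steps two and three go beyond the paper by actually sketching Webb's argument, which the paper leaves entirely as a citation.
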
 
	\begin{rem}
		As discussed in Remark \ref{action_inversion}, the original statement of Webb is about fixed loci of type $(\tilde{\delta}, 0)$. The statement for $(0,\tilde{\delta})$ is the same but with $z$ replaced by $-z$, which is why the class $K(\tilde{\delta})$ of Webb's result is replaced by $\overline{K(\tilde{\delta})}$ here.
	\end{rem}
	\subsection{Relating the Chern-Weil classes}
	One could wonder how the Chern-Weil homomorphism described above behaves with respect to abelianisation. Recall that we have the two homomorphisms
	\begin{align*}
		CW^{\tilde{\delta}} : A^\ast_T(\text{pt}) \rightarrow A^\ast_{\mathbb{C}^\ast}(\text{Q}(V/\!/T, \tilde{\delta}))\\
		CW^\delta : A^\ast_G(\text{pt}) \rightarrow A^\ast_{\mathbb{C}^\ast}(\text{Q}(V/\!/G, \delta))
	\end{align*}
	\begin{lem}\label{psiCW}
		Given $\delta \in \chi(G)^\vee$ and $\tilde{\delta} \mapsto \delta$ we have
		\begin{align*}
			\psi^\ast \text{CW}^\delta_{\vert \mathcal{F}_{0,\tilde{\delta}}} = h^\ast \text{CW}^{\tilde{\delta}}_{\vert {F}_{0,\tilde{\delta}}}
		\end{align*}
	\end{lem}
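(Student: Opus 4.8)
The plan is to reduce the identity to the standard functoriality of the Chern--Weil homomorphism under extension of structure group. Recall from Remark~\ref{ChernWeilRemark} that $\text{CW}^\delta$ (resp.\ $\text{CW}^{\tilde\delta}$) is the homomorphism obtained by pulling back universal classes along the classifying map of the right principal $(G\times\mathbb{C}^\ast)$-bundle $(\mathcal{P}_\delta)_{\vert\infty}$ (resp.\ $(T\times\mathbb{C}^\ast)$-bundle $(\mathcal{P}_{\tilde\delta})_{\vert\infty}$); equivalently, it is the composition of $A^\ast_{G}(\text{pt})\to A^\ast_{G}((\mathcal{P}_\delta)_{\vert\infty})$ with Edidin's isomorphism $d$. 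Since $\psi^\ast$ commutes with these descriptions, it suffices to identify the $(G\times\mathbb{C}^\ast)$-bundle $\psi^\ast\big((\mathcal{P}_\delta)_{\vert\infty}\big)$ on $F^0_{0,\tilde\delta}$ and to compute its Chern--Weil homomorphism.

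First I would unwind the definition of $\psi$. By construction, $\psi$ sends a $\mathbb{C}^\ast$-fixed $T$-quasimap $(P\to\mathbb{P}^1,\,P\to V)$ landing, via $\text{ev}$, in $V(G)^\text{ss}/\!/T$ to the $G$-quasimap $(P\times_T G\to\mathbb{P}^1,\,P\times_T G\to V)$ obtained by extension of structure group, and this is compatible with families. Hence $\psi^\ast\mathcal{P}_\delta$ is the $G$-bundle $\mathcal{R}\times_T G$ associated to the restriction $\mathcal{R}$ to $F^0_{0,\tilde\delta}\times\mathbb{P}^1$ of the universal $T$-quasimap, and restricting to $\infty$ gives a $(G\times\mathbb{C}^\ast)$-equivariant isomorphism $\psi^\ast\big((\mathcal{P}_\delta)_{\vert\infty}\big)\simeq \mathcal{R}_{\vert\infty}\times_T G$, where $\mathcal{R}_{\vert\infty}=h^\ast(\mathcal{P}_{\tilde\delta})_{\vert\infty}$.

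Next I would invoke the general fact that the classifying map of an associated bundle $R\times_T G\to X$ is the composite $X\xrightarrow{\mathrm{cl}_R}BT\to BG$; equivalently $\text{CW}_{R\times_T G}=\text{CW}_R\circ\text{res}$, where $\text{res}\colon A^\ast_{G}(\text{pt})\to A^\ast_{T}(\text{pt})$ is the restriction along $T\hookrightarrow G$. This is checked directly using the tautological $T$-equivariant inclusion $\iota\colon R\hookrightarrow R\times_T G$ over $X$: pulling a class of $A^\ast_{G}(\text{pt})$ into $A^\ast_{G}(R\times_T G)$, restricting the equivariance to $T$, and applying $\iota^\ast$ computes, by naturality of equivariant pullbacks, the image of the class in $A^\ast_{T}(\text{pt})$ followed by pullback to $A^\ast_T(R)$; and under Edidin's isomorphisms $A^\ast_{G}(R\times_T G)\simeq A^\ast(X)\simeq A^\ast_T(R)$ the map $\iota^\ast$ is the identity of $A^\ast(X)$, because $\iota$ covers $\mathrm{id}_X$ and carries $\pi^\ast\beta$ to $\pi^\ast\beta$ for $\beta\in A^\ast(X)$. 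Combining this with the isomorphism of the previous paragraph, and with the corresponding statement for $G\times\mathbb{C}^\ast$, $T\times\mathbb{C}^\ast$ in place of $G$, $T$ (which is automatic since $\psi$, $h$, $\iota$ are $\mathbb{C}^\ast$-equivariant), yields
\[
\psi^\ast\,\text{CW}^\delta_{\vert\mathcal{F}_{0,\tilde\delta}}=h^\ast\,\text{CW}^{\tilde\delta}_{\vert F_{0,\tilde\delta}}\circ\text{res},
\]
which is the asserted equality, under the standing convention that $\text{CW}^{\tilde\delta}$ applied to a class in $A^\ast_G(\text{pt})$ means $\text{CW}^{\tilde\delta}$ applied to its restriction in $A^\ast_T(\text{pt})$.

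I expect the main obstacle to be the bookkeeping in the first step: verifying carefully that pulling back the \emph{universal} quasimap along $\psi$ reproduces the associated-bundle construction fibrewise and in families, and matching the left/right conventions of Remark~\ref{ChernWeilRemark} with the associated bundle $R\times_T G$ — in particular checking that extension of structure group is compatible with the passage $p\cdot_{\text{right}}g:=g^{-1}\cdot_{\text{left}}p$ between left and right actions. The remaining ingredients — naturality of equivariant Chow pullbacks, functoriality of Edidin's isomorphism under $\iota$, and the $\mathbb{C}^\ast$-equivariant enhancement — are formal.
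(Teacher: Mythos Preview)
Your argument is correct and follows essentially the same route as the paper: both identify $\psi^\ast(\mathcal{P}_\delta)_{\vert\infty}$ with the associated bundle $h^\ast(\mathcal{P}_{\tilde\delta})_{\vert\infty}\times_T G$, i.e.\ recognise $h^\ast(\mathcal{P}_{\tilde\delta})_{\vert\infty}$ as a $T$-reduction of $\psi^\ast(\mathcal{P}_\delta)_{\vert\infty}$, and conclude from the functoriality of the Chern--Weil map under extension of structure group. Your write-up simply spells out in more detail the step the paper summarises as ``these two bundles have the same Chern--Weil homomorphisms,'' and is more careful about the $\mathbb{C}^\ast$-equivariance and left/right conventions.
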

	\begin{proof}
		The morphism $\psi$ is defined via the universal property of $Q(V/\!/G, \delta)$ by using the quasimap $\mathcal{P}_{\tilde{\delta}} \times_T G \rightarrow F^0_{0,\tilde{\delta}} \times \mathbb{P}^1$. Restricting to the point $\infty \in \mathbb{P}^1$, this gives the commutative diagram
		\[\begin{tikzcd}
			\mathcal{P}_{\tilde{\delta} \vert \infty} \arrow[r,  "\Psi"] \arrow[d] & \mathcal{P}_{\delta \vert \infty} \arrow[d] \\
			F^0_{0,\tilde{\delta}} \arrow[r, swap, "\psi"] & \mathcal{F}_{0,\tilde{\delta}} 
		\end{tikzcd}\]
		between the universal quasimaps for $T$ and $G$ restricted at the relevant fixed loci of the quasimap spaces. This shows that $\mathcal{P}_{\tilde{\delta}\vert \infty}$ is just a reduction of $\psi^\ast \mathcal{P}_{\delta\vert \infty}$ to the structure group $T$, hence these two bundles have the same Chern-Weil homomorphisms.
	\end{proof}
	\subsection{The computation}
	Fix a dual character $\delta \in \chi(G)^\vee$ and consider the inverse image through the restriction map $\chi(T)^\vee \rightarrow \chi(G)^\vee$. The Weyl group acts on this set, and for each Weyl orbit $\mathfrak{o}$ in $\lbrace \tilde{\delta} \in \chi(T)^\vee \, \vert \, \tilde{\delta} \mapsto \delta \rbrace$ pick a representative $\tilde{\delta}_{\mathfrak{o}}$.
	Notice that, by merging together (\ref{openDiagram}) and (\ref{pfDiagram}), we obtain the following commutative diagram:
	\begin{equation}\label{fullDiagram}
		\begin{tikzcd}
			F_{0,\tilde{\delta}_{\mathfrak{o}}}\arrow[d, "\text{ev}"]\arrow[dr, phantom, "\square"] & F^0_{0,\tilde{\delta}_{\mathfrak{o}}} \arrow[l, swap, hook, "h"] \arrow[r, "\psi"] \arrow[d, "\text{ev}"]\arrow[rd, phantom, "\square"] & \mathcal{F}_{0,\tilde{\delta}_{\mathfrak{o}}} \arrow[d, "i"] \arrow[r, hook, "k"] & {F}_{0,\delta} \arrow[d, swap, "\text{ev}"]\\
			V/\!/T & V(G)^\text{ss}/\!/T \arrow[l, swap, hook, "j"]\arrow[r, "p"] \arrow[rr, bend right=30, "g"] & V(G)^\text{ss}/P_{\tilde{\delta}_{\mathfrak{o}}} \arrow[r, "f"] & V/\!/G .
		\end{tikzcd}
	\end{equation}
	This diagram can be useful to keep track of the spaces involved in the following computation.
	We want to compute the pullback via $g : V(G)^\text{ss}/\!/T \rightarrow V/\!/G$ of
	\begin{align}\label{compEquation1}
		&\text{ev}_\ast \left(\text{CW}^\delta(P) \cap [F_{0, \delta}]^\text{vir}_\text{loc}\right) = \sum_{\mathfrak{o}} f_\ast i_\ast \left(\text{CW}^\delta(P) \cap [\mathcal{F}_{0, \tilde{\delta}_{\mathfrak{o}}}]^\text{vir}_\text{loc}\right)
	\end{align}
	where the sum is over the Weyl orbits described above, and the equality follows from Lemma \ref{connected_components} and commutativity of the right square in diagram (\ref{fullDiagram}).
	We are left to prove the following
	\begin{lem}
		In the notation above
		\begin{align*}
			g^\ast f_{\ast} i_{\ast} \left(\text{CW}^{\delta}(P)\cap [\mathcal{F}_{0, \tilde{\delta}_{\mathfrak{o}}}]^\text{vir}_\text{loc}\right) = j^\ast\sum_{\tilde{\delta} \in \mathfrak{o}} &\overline{ C(\tilde{\delta})} \cap \text{ev}_\ast \left(\text{CW}^{\tilde{\delta}}(P)\cap [F_{0, \tilde{\delta}}]^\text{vir}_\text{loc}\right).
		\end{align*}
	\end{lem}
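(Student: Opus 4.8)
The strategy is to run the proof of \cite[Theorem 1.1.1]{WebbAbelianNonAbelian} --- which is exactly the case $P=1$ of this statement --- essentially without change, carrying the Chern--Weil class along at each step. The only place the insertion $P$ enters non-trivially is through pullback along the abelianisation morphism $\psi\colon F^{0}_{0,\tilde{\delta}}\to\mathcal{F}_{0,\tilde{\delta}}$, and Lemma \ref{psiCW} says $\psi^{\ast}\text{CW}^{\delta}(P)=h^{\ast}\text{CW}^{\tilde{\delta}}(P)$. So $\text{CW}^{\delta}(P)$ is ``transparent'' under abelianisation: it commutes with all the flat pullbacks, Gysin pullbacks, proper pushforwards and cap products that appear in Webb's argument, and it never produces a correction term of its own. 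The bars ($z\mapsto -z$) everywhere are just the bookkeeping of Remark \ref{action_inversion}, turning Webb's statements about fixed loci of type $(\tilde{\delta},0)$ into statements about type $(0,\tilde{\delta})$.

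Concretely I would proceed as follows. First, pull the class $\text{CW}^{\delta}(P)\cap[\mathcal{F}_{0,\tilde{\delta}_{\mathfrak{o}}}]^{\text{vir}}_{\text{loc}}$ back along $\psi$; combining Lemma \ref{psi_lemma} with Lemma \ref{psiCW} gives
\[
\psi^{\ast}\!\left(\text{CW}^{\delta}(P)\cap[\mathcal{F}_{0,\tilde{\delta}_{\mathfrak{o}}}]^{\text{vir}}_{\text{loc}}\right)=h^{\ast}\!\left(\text{CW}^{\tilde{\delta}_{\mathfrak{o}}}(P)\cdot\overline{K(\tilde{\delta}_{\mathfrak{o}})}\cap[F_{0,\tilde{\delta}_{\mathfrak{o}}}]^{\text{vir}}_{\text{loc}}\right),
\]
with $K(\tilde{\delta})=\text{ev}^{\ast}\big(C(\tilde{\delta})\prod_{\alpha\in\Delta(\tilde{\delta})}c_{1}(\mathcal{L}_{\alpha})\big)$. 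Second, since the left square of diagram (\ref{fullDiagram}) is cartesian with $j$ and $h$ open immersions and the evaluation maps proper, flat base change and the projection formula let one pass freely between $j^{\ast}\text{ev}_{\ast}(-)$ on $V/\!/T$ and $\text{ev}_{\ast}h^{\ast}(-)$ on $V(G)^{\text{ss}}/\!/T$; this rewrites the right-hand side of the statement as a pushforward along $\text{ev}\colon F^{0}_{0,\tilde{\delta}}\to V(G)^{\text{ss}}/\!/T$ of a class that comes, via $h$, from $F_{0,\tilde{\delta}}$. Third --- and this is the heart of the matter --- I would analyse the push--pull $g^{\ast}f_{\ast}i_{\ast}$ along the tower $V(G)^{\text{ss}}/\!/T\xrightarrow{p}V(G)^{\text{ss}}/P_{\tilde{\delta}_{\mathfrak{o}}}\xrightarrow{f}V/\!/G$. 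Since $g=f\circ p$ is smooth, base change through the fiber product $Y:=V(G)^{\text{ss}}/\!/T\times_{V/\!/G}V(G)^{\text{ss}}/P_{\tilde{\delta}_{\mathfrak{o}}}$ turns $g^{\ast}f_{\ast}$ into $\text{pr}_{1,\ast}\text{pr}_{2}^{\ast}$, where $\text{pr}_{1}$ is the $G/P_{\tilde{\delta}_{\mathfrak{o}}}$-bundle obtained by extending the structure group of $V(G)^{\text{ss}}\to V(G)^{\text{ss}}/\!/T$ from $T$ to $G$, and $(\text{id},p)$ is a section of $\text{pr}_{1}$. Pushing $\text{pr}_{2}^{\ast}i_{\ast}(-)$ down along this flag bundle --- where $f_{\ast}$ is given by the Weyl-type integration formula, whose terms are indexed by the cosets $W/W_{P_{\tilde{\delta}_{\mathfrak{o}}}}$, i.e.\ by the elements of the Weyl orbit $\mathfrak{o}$ --- is precisely the computation carried out in \cite[Section 5]{WebbAbelianNonAbelian}: the several contributions reassemble the single summand attached to the representative $\tilde{\delta}_{\mathfrak{o}}$ into the sum $\sum_{\tilde{\delta}\in\mathfrak{o}}$, while the factors contributed by the fiber $G/P_{\tilde{\delta}_{\mathfrak{o}}}$ cancel the extra factor $\prod_{\alpha\in\Delta(\tilde{\delta})}c_{1}(\mathcal{L}_{\alpha})$ in $\overline{K(\tilde{\delta})}$, turning it into $\overline{C(\tilde{\delta})}$.

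The main obstacle is this last step: matching the flag-bundle pushforward $f_{\ast}$ with the orbit sum on the right term by term, with all signs and powers of $z$ accounted for. I do not expect any genuinely new phenomenon beyond what Webb treats for $P=1$; the work is simply to check that $\text{CW}^{\delta}(P)$ --- equivalently, after Lemma \ref{psiCW}, $\text{CW}^{\tilde{\delta}}(P)$ --- passes through each base change, projection formula and flag-bundle integration as a pulled-back cohomology class, so that the insertion-free computation of \cite[Section 5]{WebbAbelianNonAbelian} applies verbatim to the remaining factors. Summing the resulting per-orbit identity over the orbits $\mathfrak{o}$, and invoking Proposition \ref{connected_components} together with (\ref{compEquation1}), then yields Lemma \ref{generalisation_webb}.
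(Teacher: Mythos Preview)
Your proposal is correct and follows essentially the same route as the paper. The only cosmetic difference is in your third step: what you describe as ``base change through $Y$ followed by a Weyl-type integration over the $G/P_{\tilde{\delta}_{\mathfrak{o}}}$-bundle'' is exactly the content of Brion's lemma \cite[Proposition 2.1]{BrionFlagBundles}, which the paper invokes directly in the form \cite[Lemma 5.3.1]{WebbAbelianNonAbelian} to produce the sum over $W/\text{stab}(\tilde{\delta}_{\mathfrak{o}})$ with the denominator $\prod_{\alpha\in\Delta(\tilde{\delta}_{\mathfrak{o}})}c_{1}(\mathcal{L}_{\alpha})$; after that the paper proceeds exactly as you outline (Cartesian middle square, Lemmas \ref{psi_lemma} and \ref{psiCW}, Weyl equivariance, projection formula).
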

	\begin{proof}
		By Brion's lemma \cite[Proposition 2.1]{BrionFlagBundles} in the form of \cite[Lemma 5.3.1]{WebbAbelianNonAbelian} we can write the left-hand side as
		\begin{align*}
			\sum_{w \in W/\text{stab}(\tilde{\delta}_{\mathfrak{o}})}  (w^{-1})^\ast \frac{p^\ast i_{\ast}\left(\text{CW}^{\delta}(P)\cap [\mathcal{F}_{0, \tilde{\delta}_{\mathfrak{o}}}]^\text{vir}_\text{loc}\right)}{\prod_{\alpha \in \Delta(\tilde{\delta}_{\mathfrak{o}})}c_1(\mathcal{L}_\alpha)}, 
		\end{align*}
		where $\Delta(\tilde{\delta}_{\mathfrak{o}}) = \lbrace \alpha \in \Delta \, \vert \, \langle \tilde{\delta}_{\mathfrak{o}}, \alpha \rangle <0 \rbrace$ and $w: V(G)^\text{ss}/\!/T \xrightarrow{\sim} V(G)^\text{ss}/\!/T$ is given by the action of the Weyl group on this space\footnote{Notice that the stabiliser of $\tilde{\delta}_{\mathfrak{o}}$ doesn't act trivially on $V(G)^{\text{ss}}/\!/T$. The pullback through $w^{-1}$ written above still makes sense since the class we are pulling back is invariant for the action of $\text{stab}(\tilde{\delta}_{\mathfrak{o}})$ by construction.}. Since the middle square in (\ref{fullDiagram}) is Cartesian we can write this contribution as
		\begin{align*}
			\sum_{w \in W/\text{stab}(\tilde{\delta}_{\mathfrak{o}})}  (w^{-1})^\ast \frac{\text{ev}_\ast \psi^\ast \left(\text{CW}^{\delta}(P)\cap [\mathcal{F}_{0, \tilde{\delta}_{\mathfrak{o}}}]^\text{vir}_\text{loc}\right)}{\prod_{\alpha \in \Delta(\tilde{\delta}_{\mathfrak{o}})}c_1(\mathcal{L}_\alpha)}.
		\end{align*}
		By Lemma \ref{psi_lemma} and Lemma \ref{psiCW} we know how to pullback by $\psi$, thus getting
		\begin{align*}
			\sum_{w \in W/\text{stab}(\tilde{\delta}_{\mathfrak{o}})}  &(w^{-1})^\ast \frac{\text{ev}_\ast h^\ast \left(\overline{K(\tilde{\delta}_{\mathfrak{o}})}\text{CW}^{\tilde{\delta}_{\mathfrak{o}}}(P)\cap [F_{0, \tilde{\delta}_{\mathfrak{o}}}]^\text{vir}_\text{loc}\right)}{\prod_{\alpha \in \Delta(\tilde{\delta}_{\mathfrak{o}})}c_1(\mathcal{L}_\alpha)}.
		\end{align*}
		Finally, the Weyl group action commutes with everything as discussed in \cite[Section 5.3]{WebbAbelianNonAbelian}, hence this class coincides with
		\begin{align*}
			\sum_{w \in W/\text{stab}(\tilde{\delta}_{\mathfrak{o}})}  &\frac{\text{ev}_\ast h^\ast \left(\overline{K(w^{-1}\cdot\tilde{\delta}_{\mathfrak{o}})}\text{CW}^{w^{-1}\cdot\tilde{\delta}_{\mathfrak{o}}}(P)\cap [F_{0, w^{-1}\cdot\tilde{\delta}_{\mathfrak{o}}}]^\text{vir}_\text{loc}\right)}{\prod_{\alpha \in \Delta(w^{-1}\cdot\tilde{\delta}_{\mathfrak{o}})}c_1(\mathcal{L}_\alpha)}\\
			= j^\ast\sum_{\tilde{\delta} \in \mathfrak{o}} &\frac{\text{ev}_\ast \left(\overline{K(\tilde{\delta})}\text{CW}^{\tilde{\delta}}(P)\cap [F_{0, \tilde{\delta}}]^\text{vir}_\text{loc}\right)}{\prod_{\alpha \in \Delta(\tilde{\delta})}c_1(\mathcal{L}_\alpha)}\\
			= j^\ast\sum_{\tilde{\delta} \in \mathfrak{o}} &\overline{ C(\tilde{\delta})} \cap \text{ev}_\ast \left(\text{CW}^{\tilde{\delta}}(P)\cap [F_{0, \tilde{\delta}}]^\text{vir}_\text{loc}\right),
		\end{align*}
		where the first equality holds by identifying $W/\text{stab}(\tilde{\delta}_{\mathfrak{o}}) = \mathfrak{o}$ via the representative $\tilde{\delta}_\mathfrak{o}$, and the last one is just an immediate application of the projection formula.
	\end{proof}
	This concludes the proof of Lemma \ref{generalisation_webb}, and thus of Proposition \ref{AbelianisationClasses}.
	\section{More on the proof of Theorem \ref{theoremQuestion2}}\label{sectionSzenesVergne}
	Hoping to help the reader, we recall the main result \cite[Theorem 4.1]{SzenesVergne} by Szenes-Vergne. We are interested in the following form of their theorem:
	\begin{theorem}\label{SzenesVergneVers1}
		Consider a finite dimensional representation $V$ of an algebraic torus $T$ together with a GIT stability $\xi \in \chi(T)$ so that the action on the semistable locus is free and $V/\!/T$ is proper and semi-positive. Let $P \in A^\ast_T(\text{pt})$ and consider the formal sums
		\begin{align}\label{functionBSide}
			\sum_{w \in p^{-1}(q)} \frac{P(w)}{D_{\mathfrak{A}}(w)\prod_{\rho \in \mathfrak{A}}\rho(w)},
		\end{align}
		and
		\begin{align*}
			\langle P \rangle^T(q) := \sum_{\lambda \in \chi(T)^\vee} q^\lambda \int_{[Q(V/\!/T,\lambda)]^\text{vir}} \text{CW}^\lambda(P).
		\end{align*}
		Then the following hold true:
		\begin{enumerate}
			\item The sum (\ref{functionBSide}) is well defined for a generic $q$, and it extends to a rational function $\langle P\rangle_{\mathfrak{B}}$ on $\widecheck{T}$ which is well defined on the domain of convergence of $\langle P \rangle^T$.
			\item $\langle P \rangle^T(q)$ converges on a nonempty open (in the analytic topology) subset of $\widecheck{T}$, and on its domain of convergence it coincides with the function $\langle P \rangle_{\mathfrak{B}}(q)$.
		\end{enumerate}
	\end{theorem}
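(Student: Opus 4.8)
The statement is \cite[Theorem 4.1]{SzenesVergne} repackaged in our notation, so the plan is to match the two sides of our statement with the two sides of theirs and invoke their result. For the A-side, I would first record that by Example~\ref{exampleToricqmaps1} the space $Q(V/\!/T,\lambda)$ is the toric GIT quotient $L(\lambda)/\!/T$, with virtual class $[Q(V/\!/T,\lambda)]^{\text{vir}} = \text{CW}^\lambda\big(\prod_{\rho\in\mathfrak{A},\,\langle\lambda,\rho\rangle<0}\rho^{-1-\langle\lambda,\rho\rangle}\big)\cap[L(\lambda)/\!/T]$, and that $\text{CW}^\lambda$ is the Kirwan map for $T\curvearrowright L(\lambda)$. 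Applying Jeffrey--Kirwan localisation as in the Remark following Proposition~\ref{JKAbelian} (equivalently Corollary~\ref{nonequivariantInvariants} with $\Delta=\emptyset$) then gives $\int_{[Q(V/\!/T,\lambda)]^{\text{vir}}}\text{CW}^\lambda(P) = \text{JK}^{\mathfrak{A}}_O\big(P\prod_{\rho\in\mathfrak{A}}\rho^{-1-\langle\lambda,\rho\rangle}\big)$, which is exactly the $\lambda$-term of the series whose evaluation Szenes and Vergne compute, once the weight multiset $\mathfrak{A}$ and the stability $\xi$ are matched with the combinatorial input of \cite{SzenesVergne}.

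The convergence statement in part (2) — that only degrees in a simplicial cone contribute and that the resulting power series has positive radius of convergence — is \cite[Definition~3.3 ff. and Lemma~3.3]{SzenesVergne}, which we have already recalled as Theorem~\ref{SzenesVergneTheorem1}; semi-positivity of $(V,T,\xi)$ is precisely the hypothesis under which their argument runs.

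For part (1) and the identification $\langle P\rangle^T = \langle P\rangle_{\mathfrak{B}}$, I would translate the closed formula of \cite[Theorem 4.1]{SzenesVergne} for the sum of the series. There the limit is written as a Jeffrey--Kirwan (toric) residue of an explicit rational function on $\chi(T)^\vee_{\mathbb{C}}$; by a residue-theorem argument this residue equals a finite sum over the solutions of the system $p_i(u)=q_i$, i.e.\ over $p^{-1}(q)$, and the change of variables contributes the Jacobian of $p$, which by Lemma~\ref{JacobianP} is exactly $D_{\mathfrak{A}}$. Carrying this through produces $\sum_{w\in p^{-1}(q)}\frac{P(w)}{D_{\mathfrak{A}}(w)\prod_{\rho\in\mathfrak{A}}\rho(w)}$. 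Lemma~\ref{finiteOrbit} shows this is a well-defined finite sum for generic $q$, and since $p$ is a dominant generically étale morphism of tori, the pushforward along $p$ of a rational function is again rational, which yields the claimed rational extension $\langle P\rangle_{\mathfrak{B}}$; it agrees with the convergent series on its domain of convergence by construction.

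The main obstacle is bookkeeping rather than conceptual: pinning down the dictionary with \cite{SzenesVergne} precisely — matching their cone/polytope and residue conventions to ours, tracking the signs carried by the stability $\xi$ and the orientation of the chamber $\mathfrak{c}$, and checking that rewriting their Jeffrey--Kirwan residue as a sum over $p^{-1}(q)$ yields literally the denominator $D_{\mathfrak{A}}(w)\prod_{\rho\in\mathfrak{A}}\rho(w)$, with the factor $\prod_{\rho}\rho(w)$ coming from the Euler class of the linearisation data and the factor $D_{\mathfrak{A}}(w)$ from the Jacobian of $p$ via Lemma~\ref{JacobianP}.
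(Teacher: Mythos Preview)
Your strategy is sound but differs from the paper's, and one step has a genuine gap.

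The paper does \emph{not} unpack Szenes--Vergne's residue computation at all. Instead it observes that \cite[Theorem~4.1]{SzenesVergne} is stated on the larger torus $\bigoplus_{\rho\in\mathfrak{A}}\mathbb{C}^\ast_\rho$, introduces the smooth surjective map $u:\bigoplus_{\rho}\mathbb{C}^\ast_\rho\to\widecheck{T}$ given by $u(z)=\sum_\rho\frac{\log z_\rho}{2\pi i}[\rho]$, and notes the tautology $z^\lambda=u(z)^\lambda$, so that $\langle P\rangle^T(u(z))=\langle P\rangle_{\mathfrak{A},\mathfrak{c}}(z)$. Convergence and the equality with $\langle P\rangle_{\mathfrak{B}}$ then descend for free along $u$; rationality of $\langle P\rangle_{\mathfrak{B}}$ on $\widecheck{T}$ is obtained by fpqc descent of morphisms along the smooth surjection $u$. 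In other words, the paper treats Szenes--Vergne entirely as a black box and the whole proof is a three-line descent argument; there is no re-derivation of the residue-to-sum-over-$p^{-1}(q)$ step, no matching of JK conventions, and no direct computation of the coefficients.

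Your route---recomputing each coefficient as a JK residue and then redoing the residue-theorem argument to turn it into a sum over $p^{-1}(q)$---is in effect reproducing the \emph{proof} of \cite[Theorem~4.1]{SzenesVergne} rather than citing it, which is more work than needed. More seriously, your rationality argument has a hole: $p:U(\mathfrak{A})\to\widecheck{T}$ is not a morphism of tori (its source is an open subset of the affine space $\chi(T)^\vee_\mathbb{C}$), and it is only generically \'etale, not finite or proper, so ``pushforward of a rational function along $p$ is rational'' is not immediate. The paper's descent along $u$ sidesteps this entirely, since Szenes--Vergne already supply rationality upstairs on the big torus.
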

	\subsection{Szenes-Vergne's original theorem}
	Szenes and Vergne prove a different statement which implies the one above. Let's discuss their original version. 
	For every weight $\rho \in \mathfrak{A}$, consider the corresponding 1-dimensional representation $\mathbb{C}_\rho \simeq \mathbb{C}$. From these we can build the representation $\bigoplus_{\rho \in \mathfrak{A}}\mathbb{C}_\rho$ (denoted with $\mathfrak{g}$ in \cite{SzenesVergne}), which is abstractly isomorphic to $V$ (remember that weights appear repeated with their multiplicity inside $\mathfrak{A}$). This representation contains a torus, namely $\bigoplus_{\rho \in \mathfrak{A}}\mathbb{C}^\ast_\rho$, where we use the notation $\mathbb{C}_\rho^\ast := \mathbb{C}_\rho\setminus \lbrace0\rbrace \simeq \mathbb{C}^\ast$.
	Given $z \in \bigoplus_{\rho\in \mathfrak{A}} \mathbb{C}^\ast_\rho$ we can consider the following formal sum
	\begin{align*}
		\langle P \rangle_{\mathfrak{A}, \mathfrak{c}}(z):= \sum_{\lambda \in \chi(T)^\vee} z^\lambda \int_{[Q(V/\!/T, \lambda)]^\text{vir}} \text{CW}^\lambda(P)
	\end{align*}
	where $z^\lambda := \prod_{\rho \in \mathfrak{A}} z_\rho^{\langle \lambda, \rho \rangle}$. Consider the following submersive morphism\footnote{This is just the morphism $\mathbb{C}^n\rightarrow \mathbb{C}^r$ sending $x \mapsto \sum_\rho x_\rho \rho$ in logarithmic coordinates.}
	\begin{align}\label{uMorphism}
		u : \bigoplus_{\rho\in \mathfrak{A}} \mathbb{C}^\ast_\rho \twoheadrightarrow \widecheck{T} \quad : \quad u(z) := \sum_{\rho \in \mathfrak{A}} \frac{\log(z_\rho)}{2\pi i} [\rho].
	\end{align}
	The result of Szenes and Vergne is the following:
	\begin{theorem}[Szenes-Vergne]\label{SzenesVergneOriginal}
		\sloppy 
		Let the hypotheses of Theorem \ref{SzenesVergneVers1} hold true. The sum (\ref{functionBSide}), evaluated at $u(z)$, defines a rational function denoted with $\langle P \rangle_\mathfrak{B}(u(z))$, well defined on the domain of convergence of $\langle P \rangle_{\mathfrak{A}, \mathfrak{c}}$. Moreover, the sum $\langle P \rangle_{\mathfrak{A}, \mathfrak{c}}$ converges on a nonempty open subset of $\bigoplus_{\rho\in \mathfrak{A}} \mathbb{C}^\ast_\rho$ (in the analytic topology) and on its domain of convergence it coincides with the function $\langle P\rangle_{\mathfrak{B}}(u(z))$.
	\end{theorem}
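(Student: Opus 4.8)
The plan is to deduce this from the per-degree Jeffrey--Kirwan formula of Corollary \ref{nonequivariantInvariants} in the abelian case $G=T$ (which is \cite[Equation (3.2)]{SzenesVergne}) by resumming over the degree. First I would substitute, for each $\lambda\in\chi(T)^\vee$,
\[
\int_{[Q(V/\!/T,\lambda)]^{\text{vir}}}\text{CW}^\lambda(P)=\text{JK}^{\mathfrak{A}}_{O}\!\left(P\prod_{\rho\in\mathfrak{A}}\rho^{-1-\langle\lambda,\rho\rangle}\right)
\]
into the definition of $\langle P\rangle_{\mathfrak{A},\mathfrak{c}}$. Since $z^\lambda=\prod_{\rho}z_\rho^{\langle\lambda,\rho\rangle}$ depends only on $u(z)$, this already identifies $\langle P\rangle_{\mathfrak{A},\mathfrak{c}}(z)$ with $\langle P\rangle^T(u(z))$; the point of working on the larger torus $\bigoplus_{\rho\in\mathfrak{A}}\mathbb{C}^\ast_\rho$ is that the coordinates $z_\rho$ attached to the weights are independent, which makes the residue manipulations below transparent. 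By Theorem \ref{SzenesVergneTheorem1}, only degrees $\lambda$ in a fixed simplicial cone spanned by an integral basis $\lambda_1,\dots,\lambda_r$ of $\chi(T)^\vee$ contribute, so for $z$ in a small enough polydisk in the weight coordinates one may interchange the convergent sum over $\lambda=\sum_i n_i\lambda_i$, $n_i\geq 0$, with the Jeffrey--Kirwan residue; this polydisk is the asserted domain of convergence of $\langle P\rangle_{\mathfrak{A},\mathfrak{c}}$.

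Writing $z^\lambda\prod_{\rho}\rho^{-1-\langle\lambda,\rho\rangle}=\big(\prod_{\rho}\rho^{-1}\big)\prod_{i=1}^r\big(\prod_{\rho}(z_\rho/\rho)^{\langle\lambda_i,\rho\rangle}\big)^{n_i}$ and summing the resulting $r$-fold geometric series turns the integrand of the residue into
\[
\frac{P\,\prod_{\rho\in\mathfrak{A}}\rho^{-1}}{\prod_{i=1}^r\Big(1-\prod_{\rho\in\mathfrak{A}}(z_\rho/\rho)^{\langle\lambda_i,\rho\rangle}\Big)}.
\]
I would then evaluate $\text{JK}^{\mathfrak{A}}_{O}$ of this rational function. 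Its poles, in the linear coordinates dual to a basis of $\chi(T)$, sit exactly where $\prod_{\rho}\rho^{\langle\lambda_i,\rho\rangle}=\prod_{\rho}z_\rho^{\langle\lambda_i,\rho\rangle}$ for all $i$; since $\{\lambda_i\}$ is a basis this is the equation $p(w)=u(z)$ of (\ref{pMorphism}), so the poles are the finitely many points of $p^{-1}(u(z))$. Changing variables from the weight coordinates to the coordinates $p_i$ --- whose Jacobian is the function $D_{\mathfrak{A}}$ by Lemma \ref{JacobianP} --- converts the iterated residue into the finite sum
\[
\sum_{w\in p^{-1}(u(z))}\frac{P(w)}{D_{\mathfrak{A}}(w)\prod_{\rho\in\mathfrak{A}}\rho(w)},
\]
which is (\ref{functionBSide}) at $u(z)$. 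Because the JK residue of a rational function of $z$ is again rational in $z$, this simultaneously shows that (\ref{functionBSide}) extends to a rational function $\langle P\rangle_{\mathfrak{B}}$ on $\widecheck{T}$, well defined wherever $\langle P\rangle_{\mathfrak{A},\mathfrak{c}}$ converges, and that the two agree there; one also uses, as in the proof of Lemma \ref{finiteOrbit}, that $D_{\mathfrak{A}}$ is not identically zero, so the sum is genuinely finite for generic $q$.

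The main obstacle is the honest bookkeeping of the Jeffrey--Kirwan residue: one must control exactly which subsets of $\mathfrak{A}$ --- equivalently, which poles of the resummed rational function --- are selected by the chamber $\mathfrak{c}$, and verify that the contributions which are not of the clean geometric-series form cancel, so that the residue really collapses to the sum over $p^{-1}(q)$. This is the technical core of \cite{SzenesVergne} and rests on the combinatorics of the secondary fan together with the semi-positivity hypothesis, which is what guarantees that the contributing degrees fill out a genuine cone (rather than all of $\chi(T)^\vee$) so that the geometric series converge. Since the statement is precisely \cite[Theorem 4.1]{SzenesVergne}, I would present the argument above as a recollection of theirs, referring to their paper for the fan-combinatorial verification, and only check that passing to $\bigoplus_{\rho\in\mathfrak{A}}\mathbb{C}^\ast_\rho$ and keeping track of the insertion $P$ introduce no new difficulty.
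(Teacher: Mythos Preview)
The paper does not prove this theorem at all: it is stated as a direct citation of \cite[Theorem~4.1]{SzenesVergne}, and the surrounding text in Appendix~\ref{sectionSzenesVergne} only shows how to deduce Theorem~\ref{SzenesVergneVers1} from it. Your proposal therefore goes further than the paper, by sketching the shape of the Szenes--Vergne argument --- resumming the per-degree JK residues of Corollary~\ref{nonequivariantInvariants} into a single residue of a geometric-series kernel, then identifying the pole locus with $p^{-1}(u(z))$ via the Jacobian $D_{\mathfrak A}$ of Lemma~\ref{JacobianP}. This is a faithful outline of how \cite{SzenesVergne} proceeds, and you correctly isolate the genuine difficulty (which flags the chamber $\mathfrak c$ selects, and why the spurious contributions cancel) as the part you would defer to their paper.

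One small caution on the sketch itself: the step ``interchange the convergent sum over $\lambda$ with the Jeffrey--Kirwan residue'' is less innocent than you make it sound. The JK residue $\text{JK}^{\mathfrak A}_O$ is not a single contour integral but a signed sum of iterated residues indexed by basis subsets of $\mathfrak A$, and for a fixed $\lambda$ different subsets contribute; the interchange and the subsequent geometric-series summation must be done subset by subset, and it is exactly here that the semi-positivity hypothesis and the secondary-fan combinatorics enter to ensure each of these partial series converges on a common domain. Since you already flag this as ``the technical core of \cite{SzenesVergne}'' and plan to cite them for it, the proposal is fine as a recollection --- just be aware that the obstacle you name at the end is already present at the interchange step, not only at the final pole-selection step.
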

	\begin{rem}
		In \cite{SzenesVergne}, the generating functions $\langle P \rangle$ differ from ours in that each coefficient of their series contains an additional cohomology class under the integral sign, called \textit{Morrison-Plesser class} of the quasimap space. Szenes and Vergne consider this additional class since they want to compute quasimap invariants of Calabi-Yau subvarieties cut as complete intersections in $V/\!/T$ using a quantum Lefschetz type of theorem. Here we only consider the case of $V/\!/T$, so the Morrison-Plesser class doesn't appear in our power series.
	\end{rem}
	How do we recover Theorem \ref{SzenesVergneVers1} from the original Theorem \ref{SzenesVergneOriginal}? First of all notice that the morphism $u$ satisfies the equality $z^\lambda = u(z)^\lambda$ for all $\lambda\in \chi(T)^\vee$, hence
	\begin{align}\label{equalityOfSVSeries}
		\langle P \rangle^T (u(z)) = \langle P \rangle_{\mathfrak{A}, \mathfrak{c}}(z)
	\end{align}
	holds true for every $z \in \bigoplus_{\rho \in \mathfrak{A}}\mathbb{C}_\rho^\ast$.	If $\langle P \rangle_{\mathfrak{A}, \mathfrak{c}}$ converges on a nonempty open subset of $\bigoplus_{\rho\in \mathfrak{A}} \mathbb{C}^\ast_\rho$, then by (\ref{equalityOfSVSeries}) the sum $\langle P \rangle^T$ converges in the image of that open subset through the map $u$, which is still a nonempty open subset of $\widecheck{T}$ being $u$ a submersion. Assume that $\langle P \rangle^T$ converges at some $q \in \widecheck{T}$. Since $u$ is surjective, we have that $q= u(z)$ for some $z \in \bigoplus_{\rho \in \mathfrak{A}}\mathbb{C}_\rho^\ast$ and hence $\langle P \rangle_{\mathfrak{A}, \mathfrak{c}}$ converges at $z$ by (\ref{equalityOfSVSeries}). If Theorem \ref{SzenesVergneOriginal} holds true, then we have
	\begin{align*}
		\langle P \rangle^T (q) = \langle P \rangle_{\mathfrak{A}, \mathfrak{c}}(z) = \langle P \rangle_\mathfrak{B} (u(z)) = \langle P \rangle_\mathfrak{B} (q).
	\end{align*}
	It remains to prove that $\langle P \rangle_\mathfrak{B}$ defines a rational function on $\widecheck{T}$, but this follows from the following two facts:
	\begin{enumerate}
		\item The function $\langle P \rangle_\mathfrak{B} \circ u$ is rational by Theorem \ref{SzenesVergneOriginal}.
		\item The morphism $u$ is smooth and surjective, hence we can invoke the descent theorem for morphisms of schemes under fpqc morphisms \cite[Theorem 4.33]{vistoli2004notes}.
	\end{enumerate}
	This completes the proof of Theorem \ref{SzenesVergneVers1}.
	\printbibliography
\end{document}